\theoremstyle{plain}
\newtheorem{theorem}{Theorem}[section]
\newtheorem{lemma}[theorem]{Lemma}
\newtheorem{corollary}[theorem]{Corollary}
\newtheorem{proposition}[theorem]{Proposition}
\newtheorem{example}[theorem]{Example}
\def\GL{\operatorname{GL}}
\def\d{\operatorname{d\!}}
\def\Diff{\operatorname{Diff}}
\def\exp{\operatorname{exp}}
\def\Gr{\operatorname{Gr}}
\def\Hom{\operatorname{Hom}}
\def\im{\operatorname{im}}
\def\Im{\operatorname{Im}}
\def\Ind{\operatorname{Ind}}
\def\ker{\operatorname{ker}}
\def\max{\operatorname{max}}
\def\mod{\operatorname{mod}}
\def\res{\operatorname{res}}
\def\span{\operatorname{span}}
\def\Stab{\operatorname{Stab}}
\def\supp{\operatorname{supp}}
\begin{document}

\title[Casselman's comparison theorem]{A proof of Casselman's comparison theorem}
\author{Ning Li}
\address{Ning Li, Beijing International Center for Mathematical Research, Peking University, No. 5 Yiheyuan Road,
Beijing 100871, China}
\email{lining@bicmr.pku.edu.cn}
\author{Gang Liu}
\address{Gang Liu, Institut Elie Cartan de Lorraine, CNRS-UMR 7502, Universit\'e de Lorraine,  3 rue Augustin Fresnel,
57045 Metz, France}
\email{gang.liu@univ-lorraine.fr}
\author{Jun Yu}
\address{Jun Yu, Beijing International Center for Mathematical Research, Peking University, No. 5 Yiheyuan Road,
Beijing 100871, China}
\email{junyu@bicmr.pku.edu.cn}
\keywords{$(\mathfrak{g},K)$-module,~smooth representation, ~Casselman-Jacquet module, ~$\mathfrak{n}$-homology,
~$\mathfrak{n}$-cohomology, tempered $E$-distribution, Bruhat filtration, transversal degree filtration.}
\subjclass[2010]{22E46}
\begin{abstract}Let $G$ be a real linear reductive group and $K$ be a maximal compact subgroup. Let $P$ be a minimal
parabolic subgroup of $G$ with complexified Lie algebra $\mathfrak{p}$, and $\mathfrak{n}$ be its nilradical. In this
paper we show that: for any admissible finitely generated moderate growth smooth Fr\'echet representation $V$ of $G$,
the inclusion $V_{K}\subset V$ induces isomorphisms $H_{i}(\mathfrak{n},V_{K})\cong H_{i}(\mathfrak{n},V)$ ($i\geq 0$),
where $V_{K}$ denotes the $(\mathfrak{g},K)$ module of $K$ finite vectors in $V$. This is called Casselman's comparison
theorem (\cite{Hecht-Taylor3}). As a consequence, we show that: for any $k\geq 1$, $\mathfrak{n}^{k}V$ is a closed
subspace of $V$ and the inclusion $V_{K}\subset V$ induces an isomorphism $V_{K}/\mathfrak{n}^{k}V_{K}=
V/\mathfrak{n}^{k}V$. This strengthens Casselman's automatic continuity theorem (\cite{Casselman},\cite{Wallach}).
\end{abstract}

\maketitle

\setcounter{tocdepth}{1}

\tableofcontents

\section*{Introduction}

Let $G$ be a real linear reductive group and $K$ be a maximal compact subgroup. Let $W$ be a finitely
generated admissible $(\mathfrak{g},K)$-module and $W^{\ast}$ be its dual $(\mathfrak{g},K)$-module. Schmid constructed
a minimal globalization $W_{\min}$ which consists of analytic vectors, and the topological dual of $(W^{\ast})_{\min}$
is called a maximal globalization of $W$ (\cite{Schmid}). Casselman and Wallach constructed a smooth globalization
$W_{\infty}$ (also called a canonical globalization or a Casselman-Wallach globalization, \cite{Casselman},
\cite{Wallach}) which is a moderate growth smooth Fr\'echet representation of $G$, and one has a globalization
$W_{-\infty}$ as the topological dual of $(W^{\ast})_{\infty}$. They fit into a sequence \[W\subset W_{\min}
\subset W_{\infty}\subset W_{-\infty}\subset W_{\max}.\] Each kind of globalization has important applications. For
example, the smooth globalization $W_{\infty}$ is very useful in the theory of automorphic forms, and the maximal
globalization $W_{\max}$ is used to realize standard derived functor modules (\cite{Schmid-Wolf}).

Let's recall the Casselman-Wallach globalization (\cite{Casselman}, \cite{Wallach}, \cite{Bernstein-Krotz}). The
presentation in this paragraph mainly follows \cite{Wallach2}. A $(\mathfrak{g},K)$-module $W$ is said to be admissible if
$\dim\Hom_{K}(\tau,W)<\infty$ for any irreducible complex linear representation $\tau$ of $K$; it is said to be finitely
generated if there exists a finite dimensional subspace $F\subset W$ such that $W=\mathcal{U}(\mathfrak{g})F$. For a
continuous representation $(\pi,V)$ of $G$ on a topological vector space $V$, let $V_{K}$ be the subspace of $K$-finite
vectors, i.e., \[V_{K}=\{v\in V:\dim\span_{\mathbb{C}}\{\pi(k)v:k\in K\}<\infty\}.\] If $\dim\Hom_{K}(\tau,V)<\infty$ for
any irreducible complex linear representation $\tau$ of $K$, then we call $V$ admissible. In this case $V_{K}$ consists
of smooth vectors and it is an admissible $(\mathfrak{g},K)$-module (\cite[Proposition 8.5]{Knapp}). An admissible
continuous representation $(\pi,V)$ is said to be finitely generated if $V_{K}$ is finitely generated; it is said to be
a smooth Fr\'echet representation if $V$ is a Fr\'echet space and for any $v\in V$, the map $g\mapsto\pi(g)v$ is of class
$C^{\infty}$ on $G$. For a complex square matrix $X=\{a_{i,j}\}_{1\leq i,j\leq n}$, the Hilbert-Schmidt norm of $X$ is
defined by \[||X||=\sqrt{\sum_{1\leq i,j\leq n}|a_{i,j}|^{2}}.\] Fix a linear group imbedding $p:G\rightarrow
\GL(n,\mathbb{C})$. For any $g\in G$, we define a norm $p$ on $G$ by \[||g||=\max\{||p(g)||,||p(g)^{-1}||\},\] which
satisfies the following properties:
\begin{itemize}
\item[(i)]$||g||\geq 1$ ($\forall g\in G$);
\item[(ii)]$||xy||\leq||x||\ ||y||$ ($\forall x,y\in G$);
\item[(iii)]For any $t\in\mathbb{R}$, $\{g\in G:||g||\leq t\}$ is a compact set.
\end{itemize}
A smooth Fr\'echet representation $(\pi,V)$ is said to be moderate growth if for every continuous seminorm $\lambda$ on
$V$, there exists a continuous seminorm $\mu$ on $V$ and a constant $r\in\mathbb{R}$ such that \[\lambda(\pi(g)v)\leq
||g||^{r}\mu(v),\ \forall g\in G, \forall v\in V.\] Write $\mathcal{H}(\mathfrak{g},K)$ for the category of
$(\mathfrak{g},K)$-modules that are admissible and finitely generated; write $\mathcal{HF}_{\mod}(G)$ for the category
of admissible finitely generated moderate growth smooth Fr\'echet representations of $G$. The celebrated
Casselman-Wallach theorem asserts that the functor \[\mathcal{HF}_{\mod}(G)\longrightarrow\mathcal{H}(\mathfrak{g},K),
\quad V\mapsto V_{K}\] is an equivalence of categories.

Let $P$ be a minimal parabolic subgroup of $G$ with complexfied Lie algebra $\mathfrak{p}$. Let $\mathfrak{n}$ be
the nilradical of $\mathfrak{p}$. In this paper we show that: for any admissible finitely generated moderate growth smooth
Fr\'echet representation $V$ of $G$, the inclusion $V_{K}\subset V$ induces isomorphisms $H_{i}(\mathfrak{n},V_{K})
\cong H_{i}(\mathfrak{n},V)$ ($i\geq 0$). This is called Casselman's comparison theorem (\cite{Hecht-Taylor3}). As a
consequence, we show that: for any $k\geq 1$, $\mathfrak{n}^{k}V$ is a closed subspace of $V$ and the inclusion
$V_{K}\subset V$ induces an isomorphism $V_{K}/\mathfrak{n}^{k}V_{K}=V/\mathfrak{n}^{k}V$. This implies Casselman's
automatic continuity theorem saying that $V_{K}/\mathfrak{n}^{k}V_{K}=V/\overline{\mathfrak{n}^{k}V}$ for each $k\geq 0$
(\cite[p. 416]{Casselman}). Let $V'$ be the strong dual of $V$ (\cite{Schaefer}), which is a dual nuclear Fr\'echet space.
Our proof uses the {\it Casselman-Jacquet module} \[V^{'[\mathfrak{n}]}=\{u\in V':\exists k\in
\mathbb{Z}_{>0},\mathfrak{n'}^{k}\cdot u=0\}\] of $V'$ in an essential way.

Let's describe the strategy of our proof briefly. It is well-known that $H_{i}(\mathfrak{n},V_{K})$ and $H^{i}
(\mathfrak{n},V_{K}^{\ast})$ are finite-dimensional (\cite[Corollary 2.4]{Casselman-Osborne}). By a duality argument
(\cite[Lemma 5.11]{Casselman-Hecht-Milicic}), the homological comparison theorem $H_{i}(\mathfrak{n},V_{K})
\cong H_{i}(\mathfrak{n},V)$ ($i\geq 0$) and the cohomological comparison theorem $H^{i}(\mathfrak{n},V_{K}^{\ast})
\cong H^{i}(\mathfrak{n},V')$ ($i\geq 0$) are equivalent, where $V_{K}^{\ast}$ denotes the dual space of $V_{K}$.
Moreover, a reduction argument in \cite[Proposition 3]{Hecht-Taylor3} reduces the homological comparison theorem
to the principal series case. Let $V=I(\sigma):=\Ind_{P}^{G}(\sigma)$ (un-normalized smooth parabolic induction)
be a principal series, where $(\sigma,V_{\sigma})$ is a finite-dimensional complex linear algebraic representation
of $P$. By a theorem of Hecht-Schmid (\cite[Lemma 2.37]{Hecht-Schmid}), we have $H^{i}(\mathfrak{n},V_{K}^{\ast})=
H^{i}(\mathfrak{n},(V_{K}^{\ast})^{[\mathfrak{n}]})$. By Casselman's automatic continuity theorem
(\cite[p. 416]{Casselman},\cite[Theorem 11.4]{Bernstein-Krotz}, \cite[p.77]{Wallach}), we have $V^{'[\mathfrak{n}]}
=(V_{K}^{\ast})^{[\mathfrak{n}]}$. Then, it reduces to the following assertion: the inclusion
$I(\sigma)^{'[\mathfrak{n}]}\subset I(\sigma)'$ induces isomorphisms \begin{equation}\label{Eq:comparison1}
H^{i}(\mathfrak{n},I(\sigma)^{'[\mathfrak{n}]})=H^{i}(\mathfrak{n},I(\sigma)'),\ \forall i\in\mathbb{Z}.
\end{equation}

Choose a maximal split torus $A$ contained in $P$. Let $W=N_{G}(A)/Z_{G}(A)$ be the Weyl group of $G$. For each element
$w\in W$, choose an element $\dot{w}\in K$ representing $w$ and put $C(w)=N\dot{w}P\subset G/P$, which is called the
Bruhat cell attached to $w$. Write $W=\{w_{1},\dots,w_{r}\}$ so that $\dim C(w_{i})\leq\dim C(w_{i+1})$
($1\leq i\leq r-1$). For each $k$ ($0\leq k\leq r$), write \[Z_{k}=\bigcup_{1\leq i\leq k}C(w_{i}).\] Then, by the
Bruhat decomposition we have a stratification \[\emptyset=Z_{0}\subset Z_{1}\subset\cdots\subset Z_{r}=X\] of the flag
variety $X:=G/P$. Put \[E(\sigma)=G\times_{P}V_{\sigma}\] for a $G$ equivariant bundle on $X$ induced from the $P$
representation $V_{\sigma}$. Then, $C^{\infty}(E(\sigma))=I(\sigma)$ and $C^{\infty}(E(\sigma))'=I(\sigma)'$. For each
$k$, let $I_{k,\sigma}$ be the space of $E(\sigma)$-distributions with support contained in $Z_{k}$. For each $w\in W$,
in \cite{Casselman-Hecht-Milicic} the authors defined a space
$\mathcal{T}(C(w),E(\sigma))$ of {\it tempered $E(\sigma)$-distributions on an open subanalytic neighborhood of
$C(w)$} with support in $C(w)$. Then, \[I_{k,\sigma}/I_{k-1,\sigma}=\mathcal{T}(C(w_{k}),E(\sigma))\] for each $k$.
For each integer $p\geq 0$, in \cite{Casselman-Hecht-Milicic} the authors defined a space $F_{p}\mathcal{T}(C(w),
E(\sigma))$ of distributions in $\mathcal{T}(C(w),E(\sigma))$ of {\it transversal degree $\leq p$}. Put
\[\Gr^{p}\mathcal{T}(C(w),E(\sigma))=F_{p}\mathcal{T}(C(w),E(\sigma))/F_{p-1}\mathcal{T}(C(w),E(\sigma)).\] We show
that \[\mathcal{T}(C(w_{k}),E(\sigma))^{[\mathfrak{n}]}=I_{k,\sigma}^{[\mathfrak{n}]}/I_{k-1,\sigma}^{[\mathfrak{n}]},
\ \forall k\geq 1\] and \[\Gr^{p}\mathcal{T}(C(w),E(\sigma))^{[\mathfrak{n}]}=F_{p}\mathcal{T}(C(w),
E(\sigma))^{[\mathfrak{n}]}/F_{p-1}\mathcal{T}(C(w),E(\sigma))^{[\mathfrak{n}]},\ \forall p\geq 0.\] When $N$ acts
trivially on $V_{\sigma}$, we show that each $\Gr^{p}\mathcal{T}(C(w),E(\sigma))$ admits a finite increasing filtration
such that each graded piece is isomorphic to $\mathcal{S}(C(w))'$ (the space of Schwarz distributions on $C(w)$ as a
real Euclidean space) and $\mathcal{S}(C(w))^{'[\mathfrak{n}]}=R(C(w))\delta_{C(w)}$, where $R(C(w))$ denotes the space
of regular functions on $C(w)$ (as an affine algebraic variety) and $\delta_{C(w)}$ is an $N$ invariant volume measure
on $C(w)$ (unique up to scalar). Taking induction on $k$ and $p$, \eqref{Eq:comparison1} is reduced to the following
assertion: for each $w\in W$, the inclusion $R(C(w))\delta_{C(w)}\subset\mathcal{S}(C(w))'$ induces isomorphisms
\begin{equation}\label{Eq:comparison2}H^{i}(\mathfrak{n},R(C(w))\delta_{C(w)})=H^{i}(\mathfrak{n},\mathcal{S}(C(w))'),
\ \forall i\in\mathbb{Z}.\end{equation} We prove a more general statement than this (Proposition \ref{P:comparison1})
by a method inspired by ideas in \cite[\S 5]{Casselman-Hecht-Milicic}. For a general finite-dimensional algebraic
representation $(\sigma,V_{\sigma})$ of $P$, we prove \eqref{Eq:comparison1} by induction on the length of the
composition series of $V_{\sigma}$.

For each of the four kinds of globalization above, there is a comparison conjecture for $\mathfrak{n}$-homology
(or $\mathfrak{n}$-cohomology) for $\mathfrak{n}$ the nilradical of certain parabolic subalgebras $\mathfrak{p}$
of $\mathfrak{g}$ (\cite[Conj. 10.3]{Vogan}). Casselman claimed a proof of the comparison theorem when $\mathfrak{p}$
is any standard parabolic subalgebra, but the proof remains unpublished. For minimal globalization, the comparison
conjecture is shown in \cite{Hecht-Taylor}, \cite{Bunke-Olbrich} and \cite{Bratten}. All of these proofs use
sophisticated analytic results about D-modules. In \cite{Hecht-Taylor3} the above comparison theorem for smooth
globalization and for $\mathfrak{p}$ a standard minimal parabolic subalgebra is deduced from the corresponding
comparison theorem for minimal globalization. Compared to these developments, our proof of the comparison theorem
is more elementary and much easier. The closedness of $\mathfrak{n}^{k}V$ and the isomorphism
$V_{K}/\mathfrak{n}^{k}V_{K}=V/\mathfrak{n}^{k}V$ ($\forall k\geq 1$) shown in this paper are new.

\smallskip

\noindent\textbf{Notation and conventions.} Let $G$ be a real Lie group. Write $\mathfrak{g}_{0}$ for the Lie algebra
of $G$, and write $\mathfrak{g}=\mathfrak{g}_{0}\otimes_{\mathbb{R}}\mathbb{C}$ for the complexified Lie algebra of
$G$. Similarly, we have Lie algebras (resp. complexified Lie algebras) $\mathfrak{p}_{0}$, $\mathfrak{n}_{0}$,
$\mathfrak{a}_{0}$ (resp. $\mathfrak{p}$, $\mathfrak{n}$, $\mathfrak{a}$) of Lie groups $P$, $N$, $A$ appearing in this
paper.

For a complex Lie algebra $\mathfrak{h}$, write $\mathcal{U}(\mathfrak{h})$ for the enveloping algebra of $\mathfrak{h}$.
For an integer $p\geq 0$, write $\mathcal{U}_{p}(\mathfrak{h})$ for the subspace of $\mathcal{U}(\mathfrak{h})$
generated by elements $X_{1}\cdots X_{q}\in\mathcal{U}(\mathfrak{h})$ where $X_{1},\dots,X_{q}\in\mathfrak{h}$
and $q\leq p$.

For an affine algebraic variety $Y$ over $\mathbb{R}$, write $R(Y)$ for the ring of complex coefficient regular functions
on $Y$. For each integer $l\geq 0$, write $R_{l}(Y)$ for the space of complex coefficient regular functions on $Y$ of
degree at most $l$.

\smallskip

\noindent\textbf{Acknowledgement.} This work benefits a lot from discussions with Yoshiki Oshima. We are grateful to
Dr. Oshima for inspiring discussions. We would like to thank Professor Bernhard Kr\"otz for useful discussions
and we thank the referee for careful reading and very helpful comments. Jun Yu's research is partially supported by
the NSFC Grant 11971036.

\section{Tempered $E$-distributions supported in a subanalytic submanifold}\label{S:E-distribution}

In this section we review the theory of tempered $E$-distributions supported in a subanalytic submanifold and its transversal
degree filtration defined in \cite[\S 2]{Casselman-Hecht-Milicic}. The exposition in \cite{Casselman-Hecht-Milicic} is
excellent and the paper itself is well known to experts. For most results we only give a sketch of proof. Interested
readers are encouraged to consult the original paper \cite{Casselman-Hecht-Milicic} for more complete account and proof.

Let $X$ be a compact analytic manifold and $E$ be an analytic vector bundle on $X$ of finite rank. Let $C^{\infty}(X,E)$
denote the space of smooth sections of $E$. It is a {\it nuclear Fr\'echet space} (NF space for short) with seminorms
\[p_{D,f}:s\mapsto\max_{x\in X}|f(x)((Ds)(x))|,\] where $f\in C^{\infty}(X,E^{\ast})$ is a smooth section of the
dual vector bundle $E^{\ast}$ of $E$ and $D\in\Diff(X,E)$ is an $E$-valued smooth differential operator on $X$. Let
$C^{\infty}(X,E)'$ be the {\it strong dual} of $C^{\infty}(X,E)$ (\cite{Schaefer}), which is a {\it dual nuclear
Fr\'echet space} (DNF space for short). For an open subset $U$ of $X$, let $C_{0}^{\infty}(U,E)$ be the space of
compactly supported $E$-sections on $U$, which is again a nuclear Fr\'echet space. The strong dual
$C_{0}^{\infty}(U,E)'$ of $C_{0}^{\infty}(U,E)$ is called the space of {\it $E$-distributions on $U$}. There is a
natural restriction map \[\res_{U}: C^{\infty}(X,E)'\rightarrow C_{0}^{\infty}(U,E)'.\] Elements in the image of
$\res_{U}$ are called {\it tempered E-distributions on $U$ with respect to $X$}. When $E$ is the trivial line bundle,
we will omit the symbol $E$ from notations like $C^{\infty}(X,E)$, $C^{\infty}(X,E)'$, etc. For example,
$C_{0}^{\infty}(U)'$ stands for the space of distributions on an open subset $U$.

There is a short exact sequence \[0\rightarrow\ker\res_{U}\rightarrow C^{\infty}(X,E)'\rightarrow\im\res_{U}
\rightarrow 0.\] Put \[Z:=X-U.\] Then, elements in $\ker\res_{U}$ are $E$-distributions on $X$ supported in the closed
subset $Z$. By dualizing, there is a dual exact sequence \[0\rightarrow(\im\res_{U})'\rightarrow C^{\infty}(X,E)
\rightarrow(\ker\res_{U})'\rightarrow 0.\] Put \[\mathcal{S}(U,E)=(\im\res_{U})'\] and call it the {\it (relative)
Schwartz space of sections of $E$ on $U$ with respect to $X$}. Then, $\mathcal{S}(U,E)'=\im\res_{U}$ by duality.
The following lemma gives a characterization of sections in the space $\mathcal{S}(U,E)'=(\im\res_{U})'$.

\begin{lemma}(\cite[Lemma 2.2]{Casselman-Hecht-Milicic})\label{L:2.2}
The subspace $(\im\res_{U})'$ of $C^{\infty}(X,E)$ consists of all global sections in $C^{\infty}(X,E)$ vanishing
with all of their derivatives on $Z$.
\end{lemma}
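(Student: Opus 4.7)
The plan is to identify $(\im\res_{U})'\subset C^{\infty}(X,E)$ with the annihilator of $\ker\res_{U}$ under the canonical pairing, and then to describe that annihilator as the space of sections flat along $Z$. For the first step, one uses reflexivity: since $C^{\infty}(X,E)$ is nuclear Fr\'echet, it is reflexive, and dualizing the short exact sequence $0\to\ker\res_{U}\to C^{\infty}(X,E)'\to\im\res_{U}\to 0$ gives back the displayed sequence with $(\im\res_{U})'\hookrightarrow C^{\infty}(X,E)''=C^{\infty}(X,E)$. Under this identification, $s\in(\im\res_{U})'$ if and only if $T(s)=0$ for every $T\in C^{\infty}(X,E)'$ supported in $Z$.

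Second, for the inclusion that the annihilator is contained in the flat sections, I would argue by contrapositive. Suppose $s$ does not vanish with all of its derivatives on $Z$; then in a local chart and local trivialization of $E$ near some $z\in Z$, some partial derivative $\partial^{\alpha}s$ has a nonzero component at $z$. Pairing the point distribution $\partial^{\alpha}\delta_{z}$ with a suitable vector in $E_{z}^{\ast}$ yields an $E$-distribution on $X$ supported in $\{z\}\subset Z$ that does not annihilate $s$; hence $s\notin(\im\res_{U})'$.

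Third, for the reverse inclusion I would combine two classical ingredients: (a) every $T\in C^{\infty}(X,E)'$ has finite order on the compact manifold $X$; (b) if $s$ is flat along $Z$ and $T$ has order $k$ with $\supp T\subset Z$, then cutoff approximations $s_{\varepsilon}$, equal to $s$ outside an $\varepsilon$-neighborhood of $Z$ and vanishing near $Z$, can be built so that $s_{\varepsilon}\to s$ in the $C^{k}$-topology. Since $T(s_{\varepsilon})=0$ for every $\varepsilon$, continuity of $T$ in that topology forces $T(s)=0$. The $E$-valued case reduces to the scalar one by a partition of unity subordinate to a trivializing cover.

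The main obstacle is the $C^{k}$-approximation in the third step: producing $s_{\varepsilon}$ whose derivatives up to order $k$ converge uniformly to those of $s$ requires Whitney-type estimates that exploit the flatness of $s$ along $Z$ in order to offset the blow-up of derivatives of a cutoff as $\varepsilon\to 0$. The reflexivity setup and the contrapositive construction via point derivatives of $\delta_{z}$ are routine; the genuine analytic work is concentrated in the flatness-driven approximation argument, whereas the remaining passages are bookkeeping about dualities of NF/DNF spaces and local trivializations.
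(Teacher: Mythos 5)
Your plan is correct and follows the paper's proof in substance: the identification of $(\im\res_{U})'$ with the annihilator of the distributions supported in $Z$ (via reflexivity of the NF space $C^{\infty}(X,E)$) and the use of point-supported distributions (derivatives of delta distributions at points of $Z$) for the first inclusion are exactly the paper's argument. For the converse, the paper simply cites \cite[Lemma 2.1]{Casselman-Hecht-Milicic} (a finite-order distribution kills sections flat along its support), and your finite-order-plus-cutoff approximation with Whitney-type estimates is precisely a direct proof of the case of that lemma needed here, so the two arguments coincide in content, with yours merely self-contained.
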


\begin{proof}
Let $s\in(\im\res_{U})'$. Any distribution supported in a point $x\in Z$ is a finite linear combination of
derivatives of delta like distributions $s\mapsto\alpha(s(x))$ ($\alpha\in E_{x}^{\ast}$). Evaluating these
distributions at $s\in(\im\res_{U})'$, it follows that $s$ vanishes with all derivatives at $x$. Then, all
$s\in(\im\res_{U})'$ vanish with all derivatives on $Z$. The converse follows from
\cite[Lemma 2.1]{Casselman-Hecht-Milicic}, which asserts that any order $\leq m$ $E$-distribution $T$ on $X$
vanishes on $s\in C^{\infty}(X,E)$ such that $Ds|_{\supp(T)}=0$ for all order $\leq m$ smooth differential
operators $D$ on $E$.
\end{proof}

Let $Y$ be a submanifold of $X$ which is also a {\it subanalytic set} (\cite{Bierstone-Milman}) in $X$ (note that
being a submanifold implies that $Y$ is an open subset of $\bar{Y}$). An open subset $U$ containing $Y$ is called
a {\it subanalytic neighborhood} of $Y$ if $U$ is subanalytic in $X$ and $Y$ is a closed subset of $U$, i.e, $\bar{Y}
\cap U=Y$. Let $\mathcal{T}(Y,E)$ denote the space of {\it tempered $E$-distributions on $U$ with support in $Y$}.
By the following Lemma \ref{L:2.6}, the space $\mathcal{T}(Y,E)$ is independent of the choice of the subanalytic
open neighborhood $U$.

\begin{lemma}(\cite[Lemma 2.6]{Casselman-Hecht-Milicic})\label{L:2.6}
If $U'\subset U$ are two subanalytic open neighborhoods of $Y$, then the restriction map $\res_{U,U'}$ induces an
isomorphism of the space of tempered $E$-distributions on $U$ with support in $Y$ onto the space of tempered
$E$-distributions on $U'$ with support in $Y$.
\end{lemma}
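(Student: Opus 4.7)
The lemma splits into injectivity and surjectivity of the restriction map $\res_{U,U'}$, and both parts come down to a cut-off construction adapted to the disjoint closed pair $(Y,U\setminus U')$ inside the manifold $U$.

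For injectivity, I plan to apply smooth Urysohn on the paracompact manifold $U$: since $Y$ and $U\setminus U'$ are disjoint closed subsets of $U$, there exists $\chi\in C^\infty(U)$ with $\chi\equiv 1$ on a neighborhood of $Y$ and $\chi\equiv 0$ on a neighborhood of $U\setminus U'$. Given $T\in\mathcal{T}(Y,E)$ on $U$ with $\res_{U,U'}(T)=0$, any test section $s\in C_0^\infty(U,E)$ splits as $s=\chi s+(1-\chi)s$: the first summand lies in $C_0^\infty(U',E)$, where $T$ vanishes by hypothesis, and the second vanishes on a neighborhood of $Y\supset\supp(T)$. Hence $T(s)=0$.

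For surjectivity, pick any $\tilde{T}\in C^\infty(X,E)'$ with $\res_{U'}(\tilde{T})=T'$. The aim is to produce a modified lift $\tilde{T}^{\mathrm{new}}\in C^\infty(X,E)'$ with $\res_{U'}(\tilde{T}^{\mathrm{new}})=T'$ and $\supp(\tilde{T}^{\mathrm{new}})\subset(X\setminus U)\cup Y$, which is closed in $X$ because $Y$ is closed in $U$; then $T:=\res_U(\tilde{T}^{\mathrm{new}})$ will lie in $\mathcal{T}(Y,E)$ on $U$ and restrict to $T'$. The recipe I would try is $\tilde{T}^{\mathrm{new}}(f):=\tilde{T}(\phi f)$ for a cut-off $\phi\in C^\infty(X)$ satisfying $\phi\equiv 1$ on an open neighborhood of $Y$ in $X$ and $\phi$ vanishing to infinite order on $U\setminus U'$. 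The first property, combined with $\supp(T')\subset Y$, yields $\res_{U'}(\tilde{T}^{\mathrm{new}})=T'$, because $(\phi-1)s$ vanishes near $Y$ for any $s\in C_0^\infty(U',E)$. The second, combined with the compact support of $s\in C_0^\infty(U,E)$ inside $U$, ensures that $\phi s$ extended by zero is an element of $\mathcal{S}(U',E)$; if moreover $s$ vanishes near $Y$ then so does $\phi s$, so $\tilde{T}(\phi s)=T'(\phi s)=0$, giving the support condition on $T$.

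The hard step is the construction of $\phi$. A naive smooth Urysohn on $X$ is unavailable, because the $X$-closures $\overline{Y}^X$ and $\overline{U\setminus U'}^X$ can intersect at points of $\partial U$; for instance, in the paper's applications $\overline{Y}^X$ contains lower-dimensional Bruhat strata lying in $X\setminus U$ that may also be limits of $U\setminus U'$. This is where the subanalytic hypothesis of the lemma is essential: by Bierstone--Milman's theory of smooth functions on subanalytic sets, one produces a smooth function on the compact analytic manifold $X$ with prescribed flat $\infty$-jets along the disjoint subanalytic subsets $Y$ and $U\setminus U'$ of $U$, even when their $X$-closures overlap. With such a $\phi$ in hand, the two verifications above complete the proof of surjectivity.
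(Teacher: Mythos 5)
Your injectivity argument is correct and is just a more explicit version of the paper's one-line remark that injectivity follows from the support condition: since $Y$ and $U\setminus U'$ are disjoint closed subsets of the manifold $U$, the cut-off $\chi\in C^{\infty}(U)$ exists and the decomposition $s=\chi s+(1-\chi)s$ does the job (no subanalyticity is needed for this half).

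The surjectivity half, however, has a genuine gap, and it sits exactly at the step you call hard: the cut-off $\phi\in C^{\infty}(X)$ you require cannot exist in precisely the situation you acknowledge. If $x\in\overline{Y}\cap\overline{U\setminus U'}$ (closures taken in $X$), then $x$ is a limit of points of $Y$, where your conditions force $\phi=1$, and a limit of points of $U\setminus U'$, where flatness forces $\phi=0$; continuity of $\phi$ at $x$ then requires $\phi(x)=1$ and $\phi(x)=0$ simultaneously. So no continuous, let alone smooth, function on $X$ with your two properties exists once the closures meet — and they do meet in the Bruhat-cell situations the lemma is used for. Weakening ``$\phi\equiv1$ near $Y$'' to ``$1-\phi$ flat along $Y$'' does not help, since the same two limits give the same contradiction. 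The appeal to Bierstone--Milman is therefore misdirected: Whitney-type prescription of flat jets along two subanalytic sets is only possible when the prescribed jets are compatible at common limit points, and the constants $1$ and $0$ are not. What subanalyticity actually buys here is {\L}ojasiewicz regular separation, whose dual consequence (this is the mechanism in \cite{Casselman-Hecht-Milicic}) is that a distribution supported in a union $A\cup B$ of closed subanalytic subsets of $X$ splits as a sum of distributions supported in $A$ and in $B$, equivalently $I(A)+I(B)=I(A\cap B)$ for the ideals of flat functions. Applying this with $A=Y\cup(X\setminus U)$ and $B=X\setminus U'$ to a global extension $\tilde T$ of $T'$ (whose support lies in $A\cup B$ because $\res_{U'}\tilde T=T'$ is supported in $Y$) produces the corrected lift you were after, and shows that the extension by zero of $T'$ from $U'$ to $U$ is again tempered. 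That is the route the paper takes: injectivity from the support condition, and surjectivity because ``the inverse of $\res_{U,U'}$ is the extension-by-zero map,'' the temperedness of that zero-extension being exactly the point that needs the subanalytic machinery rather than a global cut-off.
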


\begin{proof}
Injectivity of the restriction map $\res_{U,U'}$ is due to the supporting set condition. The inverse of the
restriction map $\res_{U,U'}$ is given by ``the extension by zero" map.
\end{proof}

For a closed subset $K$ of $X$, let $C^{\infty}(X,E)'_{K}$ denote the space of $E$-distributions on $X$ with support
contained in $K$. Since $Y$ is a closed subset of $U$, we have $\bar{Y}\cap U=Y$. Then, $\bar{Y}-Y\subset X-U=Z$.
Thus, $Y\cup Z$ is a closed subset of $X$. The following lemma is clear.

\begin{lemma}\label{L:tempered3}
The space $\mathcal{T}(Y,E)$ is equal to $C^{\infty}(E)'_{Z\cup Y}/C^{\infty}(E)'_{Z}$.
\end{lemma}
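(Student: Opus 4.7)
The plan is to unravel the definitions and reduce everything to bookkeeping of support conditions. By construction $\mathcal{T}(Y,E)$ consists of those tempered $E$-distributions on $U$ whose support is contained in $Y$, and tempered $E$-distributions on $U$ are by definition the image of $\res_{U}\colon C^{\infty}(X,E)'\to C^{\infty}_{0}(U,E)'$. First I would invoke the defining exact sequence
\[0\to\ker\res_{U}\to C^{\infty}(X,E)'\to\im\res_{U}\to 0,\]
which identifies $\im\res_{U}$ with $C^{\infty}(X,E)'/\ker\res_{U}$. A distribution $T$ on $X$ lies in $\ker\res_{U}$ iff it vanishes on every $f\in C_{0}^{\infty}(U,E)$ (viewed on $X$ by extension by zero). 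Since such sections are precisely the global sections of $E$ supported in the open set $U$, this kernel is exactly $C^{\infty}(X,E)'_{Z}$.

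Next I would identify the preimage of $\mathcal{T}(Y,E)$ under the quotient map $\pi\colon C^{\infty}(X,E)'\twoheadrightarrow\im\res_{U}$. For $T\in C^{\infty}(X,E)'$, the distribution $\res_{U}(T)$ has support in $Y$ iff it vanishes on every $f\in C^{\infty}_{0}(U\setminus Y,E)$. Because $X$ is compact and $U\setminus Y$ is open, each such $f$ is the extension by zero of a global section of $E$ on $X$ whose support lies in $U\setminus Y$, and conversely every such global section is automatically compactly supported there. The condition therefore becomes: $T$ annihilates all global sections supported in $U\setminus Y=X\setminus(Z\cup Y)$, i.e., $\supp T\subset Z\cup Y$, i.e., $T\in C^{\infty}(X,E)'_{Z\cup Y}$.

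Combining the two steps, $\pi$ restricts to a surjection $C^{\infty}(X,E)'_{Z\cup Y}\twoheadrightarrow\mathcal{T}(Y,E)$ whose kernel is
\[C^{\infty}(X,E)'_{Z\cup Y}\cap\ker\res_{U}=C^{\infty}(X,E)'_{Z\cup Y}\cap C^{\infty}(X,E)'_{Z}=C^{\infty}(X,E)'_{Z},\]
the last equality holding because $Z\subset Z\cup Y$. This yields the claimed isomorphism $\mathcal{T}(Y,E)\cong C^{\infty}(E)'_{Z\cup Y}/C^{\infty}(E)'_{Z}$.

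As the lemma is essentially an exercise in shuffling support conditions, I do not foresee a substantive obstacle. The one point that warrants justification is the equivalence between ``vanishing on $C^{\infty}_{0}(U\setminus Y,E)$'' and ``having support in $Z\cup Y$ as a distribution on $X$''; this rests only on the compactness of $X$, which guarantees that every section of $E$ over $X$ with support in the open set $U\setminus Y$ is compactly supported there. Everything else is formal manipulation of the defining exact sequence.
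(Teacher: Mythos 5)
Your argument is correct and is exactly the support-bookkeeping the paper has in mind when it declares this lemma clear (it offers no written proof): identifying $\ker\res_{U}$ with $C^{\infty}(X,E)'_{Z}$ and the preimage of $\mathcal{T}(Y,E)$ with $C^{\infty}(X,E)'_{Z\cup Y}$, using that $X\setminus(Z\cup Y)=U\setminus Y$ and that compactness of $X$ lets one pass between global sections supported in that open set and compactly supported sections on it. Nothing further is needed.
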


For an integer $p$, let $M_{p}$ denote the closed space of $\mathcal{S}(U,E)$ consisting of sections which vanish
with all derivatives of order $\leq p$ along $Y$. Put $F_{p}\mathcal{T}(Y,E)=M_{p}^{\perp}$ and call it the
{\it space of tempered $E$-distributions with support in $Y$ and of transversal degree $\leq p$}. Then,
\[F\mathcal{T}(Y,E)=\{F_{p}\mathcal{T}(Y,E)\}_{p=0}^{\infty}\] is an exhaustive increasing filtration of
$\mathcal{T}(Y,E)$.


For each $p\in\mathbb{Z}$, let $M_{0p}$ denote the subspace of $M_{p}$ consisting of sections with compact support contained
in $U$. The following lemma gives a useful characterization of $F_{p}\mathcal{T}(Y,E)$ in practice. The proof of Lemma
\ref{L:2.9} is a bit complicated. We refer interested readers to consult the original paper \cite{Casselman-Hecht-Milicic}.

\begin{lemma}(\cite[Lemma 2.9]{Casselman-Hecht-Milicic})\label{L:2.9}
For any integer $p$, we have \[M_{0p}^{\perp}=M_{p}^{\perp}.\]
\end{lemma}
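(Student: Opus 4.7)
My plan is to reduce the annihilator equality to a density statement and then approximate by mollification using subanalytic cut-offs. Since $M_{0p}\subset M_p$ yields $M_p^{\perp}\subset M_{0p}^{\perp}$ for free, the content is the reverse inclusion. As $\mathcal{S}(U,E)$ is a nuclear Fr\'echet space and $M_p$ is a closed subspace of it, Hahn--Banach shows that the equality $M_{0p}^{\perp}=M_p^{\perp}$ inside $\mathcal{S}(U,E)'$ is equivalent to $M_{0p}$ being dense in $M_p$ for the topology induced from $C^{\infty}(X,E)$. So given $s\in M_p$, I aim to produce a sequence $s_n\in M_{0p}$ with $s_n\to s$ in every seminorm $p_{D,f}$.

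The mollifiers will come from a subanalytic distance function to $Z:=X-U$. Because $Z$ is a closed subanalytic subset of the compact analytic manifold $X$, the subanalytic toolkit (\cite{Bierstone-Milman}) supplies a smooth subanalytic function $\rho:X\to[0,1]$ with $\rho^{-1}(0)=Z$ obeying \L ojasiewicz-type estimates of the form $|\partial^{\alpha}\rho(x)|\leq C_{\alpha}\rho(x)^{-N_{\alpha}}$ in any local coordinate chart on a relatively compact subanalytic piece of $U$. Fix $\eta\in C^{\infty}(\bbR)$ with $\eta(t)=0$ for $t\leq 1$ and $\eta(t)=1$ for $t\geq 2$, and set $\chi_{n}(x)=\eta(n\rho(x))$. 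Each $\chi_{n}$ is smooth on $X$, is identically $1$ away from a shrinking neighborhood of $Z$, and has compact support inside $U$ (the closed set $\{\rho\geq 1/n\}$ is compact in $X$ and disjoint from $Z$).

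The candidate approximants are $s_{n}=\chi_{n}s$. They have compact support in $U$ by construction, and the Leibniz rule together with $s\in M_p$ gives, for every $|\beta|\leq p$ and every $y\in Y$,
\[
\partial^{\beta}(\chi_{n}s)(y)=\sum_{\beta_{1}+\beta_{2}=\beta}\tbinom{\beta}{\beta_{1}}\partial^{\beta_{1}}\chi_{n}(y)\,\partial^{\beta_{2}}s(y)=0,
\]
since $|\beta_{2}|\leq p$ forces $\partial^{\beta_{2}}s(y)=0$. Hence $s_{n}\in M_{0p}$. Convergence $s_{n}\to s$ in $C^{\infty}(X,E)$ reduces, in local trivializations of $E$, to showing $\partial^{\gamma}((1-\chi_{n})s)\to 0$ uniformly on $X$ for every multi-index $\gamma$. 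On $\{\rho\geq 2/n\}$ this is identically zero, while on $\{\rho\leq 2/n\}$ the \L ojasiewicz bounds for $\rho$ and the chain rule yield $|\partial^{\alpha}\chi_{n}(x)|\leq C'_{\alpha}\rho(x)^{-M_{\alpha}}$ (the $n^{|\alpha|}$ factor is absorbed using $\rho(x)\leq 2/n$), and Taylor's theorem applied to $s$, which vanishes to infinite order along $Z$, gives $|\partial^{\beta}s(x)|\leq C_{\beta,L}\rho(x)^{L}$ for every $L>0$ and every $\beta$. Choosing $L$ larger than the finitely many $M_{\alpha}$ appearing in the Leibniz expansion of $\partial^{\gamma}((1-\chi_{n})s)$ bounds it uniformly by a constant times $(2/n)^{L-M}\to 0$.

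The main obstacle is producing the subanalytic function $\rho$ with the \L ojasiewicz estimates for its derivatives: for a general closed set $Z$, derivatives of any smooth cut-off might blow up faster than any polynomial in $1/\rho$ and the argument would collapse. This is exactly where the subanalyticity built into the definition of a subanalytic neighborhood $U$ is indispensable. Once $\rho$ is available from \cite{Bierstone-Milman}, the remaining estimates are bookkeeping, and the density of $M_{0p}$ in $M_p$ (hence the lemma) follows.
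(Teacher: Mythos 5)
You are right that the content of the lemma is the reverse inclusion, and your reduction is sound: since $\mathcal{S}(U,E)$ carries the subspace topology from the NF space $C^{\infty}(X,E)$ and every tempered distribution pairs with it through an extension in $C^{\infty}(X,E)'$, it suffices to show that $M_{0p}$ is dense in $M_{p}$, and the scheme $s_{n}=\chi_{n}s$ with a cut-off vanishing near $Z$ is the natural way to do this; the Leibniz argument for $s_{n}\in M_{0p}$ and the compact-support statement are fine. Note, however, that the paper itself gives no proof of this lemma (it explicitly defers to \cite{Casselman-Hecht-Milicic}), so the only comparison available is with that reference, whose argument is likewise a duality/approximation argument but carried out with more care at exactly the point discussed below.

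The gap sits at the step you yourself flag as the main obstacle, and as written it does not close. First, if $\rho$ is smooth on the compact manifold $X$, the ``\L ojasiewicz-type'' estimate $|\partial^{\alpha}\rho|\leq C_{\alpha}\rho^{-N_{\alpha}}$ is vacuous (all derivatives of $\rho$ are bounded), so it cannot be the input that makes the argument work. Second, the bound you actually use, $|\partial^{\beta}s(x)|\leq C_{\beta,L}\,\rho(x)^{L}$, does not follow from Taylor's theorem: flatness of $s$ along $Z$ gives $|\partial^{\beta}s(x)|\leq C_{\beta,M}\,d(x,Z)^{M}$, and since any smooth $\rho$ vanishing on $Z$ satisfies $\rho\leq C\,d(\cdot,Z)$, powers of $\rho$ are \emph{smaller} than powers of $d(\cdot,Z)$; the inequality you need goes the other way. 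What is really required is a genuine \L ojasiewicz lower bound $\rho(x)\geq c\,d(x,Z)^{N}$, which does hold for a continuous subanalytic $\rho$ with zero set exactly $Z$, but then you must also produce a $\rho$ that is simultaneously smooth, subanalytic and vanishes exactly on $Z$ — this is asserted, not supplied, and \cite{Bierstone-Milman} does not give it in this generality (without it, a smooth but overly flat $\rho$, e.g. comparable to $e^{-1/d}$ with badly behaved logarithmic derivatives, can make $n^{|\alpha|}\sim\rho^{-|\alpha|}$ overwhelm the decay of $s$). The simplest repair is to drop subanalyticity altogether: take $\rho$ to be a regularized distance to the closed set $Z$ (Whitney/Stein), smooth on $X-Z$, with $c_{1}d(\cdot,Z)\leq\rho\leq c_{2}d(\cdot,Z)$ and $|\partial^{\alpha}\rho|\leq C_{\alpha}d(\cdot,Z)^{1-|\alpha|}$; then on the transition zone $\{1/n\leq n^{-1}\cdot n\rho\leq 2/n\}$ one gets $|\partial^{\alpha}\chi_{n}|\leq C_{\alpha}d(\cdot,Z)^{-|\alpha|}$, and the flatness of $s$ expressed directly in powers of $d(\cdot,Z)$ closes the estimate. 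With that substitution your density argument is correct (and, interestingly, uses no subanalyticity of $Z$ at all for this particular lemma).
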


The following is \cite[Example 2.3]{Casselman-Hecht-Milicic}.

\begin{example}(\cite[Example 2.3]{Casselman-Hecht-Milicic})\label{E:linear1}
Let $X=S^{n}$ be an $n$-dimensional sphere, $p\in S^{n}$ and $\pi: S^{n}-\{p\}\rightarrow\mathbb{R}^{n}$ be the
stereographical projection of $S^{n}-\{p\}$ onto $\mathbb{R}^{n}$ with respect to the pole $p$. The closed subspace
of $C^{\infty}(S^{n})$ consisting of smooth functions vanishing with all derivatives at $p$ can be identified with
the ``classical" Schwartz space $\mathcal{S}(\mathbb{R}^{n})$ via the map $\pi$. Dually, this identifies the space
of tempered distributions on $S^{n}-\{p\}$ with respect to $S^{n}$ with the space $\mathcal{S}(\mathbb{R}^{n})'$ of
``classical" tempered distributions on $\mathbb{R}^{n}$.
\end{example}

The following is a generalization of \cite[Example 2.11]{Casselman-Hecht-Milicic} to a general linear subspace.

\begin{example}(\cite[Example 2.11]{Casselman-Hecht-Milicic})\label{E:linear2}
Let $S^{k}$ be a $k$-dimensional sphere in $X=S^{n}$ passing through $p$ such that the stereographical projection image
of $Y=S^{k}-\{p\}$ is equal to the linear subspace $\mathbb{R}^{k}$ of $\mathbb{R}^{n}$ defined by $x_{k+1}=\cdots=
x_{n}=0$. Using Lemma \ref{L:2.9} one can show that \[F_{p}\mathcal{T}(Y)\!=\!\{T\in\mathcal{S}(\mathbb{R}^{n})':
\!(x_{k+1}^{a_{k+1}}\cdots x_{n}^{a_{n}})T=0, a_{j}\geq 0,\!\sum_{k+1\leq j\leq n}\!a_{j}\!=\!p+1\}.\] Then, \begin{equation}\label{Eq:F0}F_{0}\mathcal{T}(Y)=\mathcal{S}(\mathbb{R}^{k})',\end{equation} where
$\mathcal{S}(\mathbb{R}^{k})'$ means the image of the inclusion of $\mathcal{S}(\mathbb{R}^{k})'$ into
$\mathcal{S}(\mathbb{R}^{n})'$. Moreover (cf. \cite[Ch. III, \S 10]{Schwartz}), \begin{equation}\label{Eq:TY}
\mathcal{T}(Y)=\bigoplus_{a_{j}\geq 0}\partial_{k+1}^{a_{k+1}}\cdots\partial_{n}^{a_{n}}F_{0}\mathcal{T}(Y)
\end{equation} and \begin{equation}\label{Eq:Fp}F_{p}\mathcal{T}(Y)=\bigoplus_{a_{j}\geq 0,~\sum a_{j}\leq p}
\partial_{k+1}^{a_{k+1}}\cdots\partial_{n}^{a_{n}}F_{0}\mathcal{T}(Y)\end{equation} for each $p\geq 0$.
\end{example}


\section{Bruhat filtration}\label{S:Bruhat}

In this section we review the Bruhat filtration defined in \cite[\S 3-4]{Casselman-Hecht-Milicic}. The same as in
\S \ref{S:E-distribution}, for most results we only give a sketch of proof. Interested readers are encouraged to
consult the excellent paper \cite{Casselman-Hecht-Milicic} for more complete account and proof.

Let $G$ be a real linear reductive group with Lie algebra $\mathfrak{g}_{0}$ and $K$ be a maximal compact subgroup with Lie
algebra $\mathfrak{k}_{0}$. Let $\mathfrak{g}_0=\mathfrak{k}_0+\mathfrak{q}_0$ be the corresponding Cartan decomposition.
Choose a maximal abelian subspace $\mathfrak{a}_0$ of $\mathfrak{q}_0$. Let $\mathfrak{g}$, $\mathfrak{k}$, $\mathfrak{q}$,
$\mathfrak{a}$ be the complexifications of $\mathfrak{g}_0$, $\mathfrak{k}_0$, $\mathfrak{q}_0$, $\mathfrak{a}_0$
respectively. Write $\Phi=\Phi(\mathfrak{g},\mathfrak{a})$ for the restricted root system from the adjoint action of
$\mathfrak{a}$ on $\mathfrak{g}$. Choose a positive system $\Phi^{+}=\Phi^{+}(\mathfrak{g},\mathfrak{a})$ of $\Phi$. Let
$\mathfrak{n}$ be the subalgebra of $\mathfrak{g}$ spanned by root spaces of positive roots in $\Phi^{+}$, and
$\mathfrak{n}_{0}=\mathfrak{n}\cap\mathfrak{g}_{0}$. There is an Iwasawa decomposition $G=KAN$ where $A$ and $N$ are the
closed Lie subgroups of $G$ corresponding to the Lie subalgebras $\mathfrak{a}_0,\mathfrak{n}_0$ of $\mathfrak{g}_{0}$
respectively. Put $M=Z_{K}(\mathfrak{a}_{0})$ and $P=MAN$. Then, $P$ is a minimal parabolic subgroup of $G$.

Let $X=G/P$ be the flag variety associated to the minimal parabolic subgroup $P$. Write $W=N_{G}(A)/Z_{G}(A)$ and
put $r=|W|$. It is well known that the conjugaction action of $W$ on $\mathfrak{a}$ identifies $W$ with the Weyl
group of the restricted root system $\Phi$. For each $w\in W$, choose an element $\dot{w}\in N_{K}(A)$ representing
$w$ and write $x_{w}=\dot{w}P\in C(w)$. Set \[C(w)=N\cdot x_{w}=N\dot{w}P/P\subset X=G/P,\] which is called the
{\it Bruhat cell attached to $w$} (\cite[line -1]{Casselman-Hecht-Milicic}). Each $C(w)$ is a smooth submanifold and
is a subanalytic set in $X$. The Bruhat decomposition of $G$ relative to $P$ asserts that \[G/P=\bigsqcup_{w\in W}
C(w).\] Let $w_0$ be the longest element in $W$, i.e., the unique element in $W$ which maps $\Phi^{+}$ to $-\Phi^{+}$.
For each $w\in W$, put $N^{w}=wNw^{-1}$. Then, the map \[(N\cap N^{w})\times(N\cap N^{ww_{0}})\rightarrow N,
\quad n_{1}\times n_{2}\mapsto n_{1}n_{2}\] is an isomorphism of algebraic varieties. Hence, $N\cong(N\cap N^{w})
\times(N\cap N^{ww_{0}})$ as an affine algebraic variety. Thus, the map $n\mapsto n\dot{w}P$ ($n\in N\cap N^{ww_{0}}$)
gives an isomorphism $N\cap N^{ww_{0}}\cong C(w)$. Moreover, we know that the complexified Lie algebra $\bar{\mathfrak{n}}
=\mathfrak{n}^{w_0}$ of $\bar{N}:=N^{w_0}$ is spanned by root spaces of negative roots in $\Phi^{-}=-\Phi^{+}$.

Write $W=\{w_{1},\dots,w_{r}\}$ so that $\dim C(w_{i})\leq\dim C(w_{i+1})$ ($1\leq i\leq r-1$). For each $k$
($0\leq k\leq r$), write \[Z_{k}=\bigcup_{1\leq i\leq k}C(w_{i}).\] Here, we remark that the stratification
\[\emptyset=Z_{0}\subset Z_{1}\subset\cdots\subset Z_{r}=X\] is a refinement of the stratification defined in
\cite{Casselman-Hecht-Milicic}. Then, the Buhat filtration defined below is a refinement of that in
\cite{Casselman-Hecht-Milicic}.

Let $(\sigma,V_{\sigma})$ be a finite-dimensional complex linear algebraic representation of $P$. Here, being
algebraic means {\it matrix coefficients} \[c_{\alpha,v}(x)=\langle\alpha, x\cdot v\rangle\ (x\in P)\] of $\sigma$
are regular functions on $P$ (as an algebraic variety) for all $(\alpha,v)\in V_{\sigma}^{\ast}\times V_{\sigma}$.
Different from that in \cite{Casselman-Hecht-Milicic}, we don't assume that the action of $N$ on $V_{\sigma}$ is
trivial. All statements in \cite{Casselman-Hecht-Milicic} extend to our setting. Write \[E(\sigma):=G\times_{P}
V_{\sigma}\] for a smooth equivariant vector bundle with fibre space $V_{\sigma}$ at $P\in G/P$. Write $I(\sigma)=
\Ind_{P}^{G}(\sigma)$ for the {\it un-normalized smooth parabolic induction} from the representation $\sigma$ of
$P$. Then, \[C^{\infty}(E(\sigma))=I(\sigma)\textrm{ and }C^{\infty}(E(\sigma))'=I(\sigma)'.\] For each $k$
($0\leq k\leq r$), let $C^{\infty}_{k}(\sigma)$ be the space of smooth sections of $E(\sigma)$ which vanish with
all derivatives along $Z_{k}$. We call the following finite decreasing filtration of $\mathcal{U}(\mathfrak{g})$
submodules of $I(\sigma)$: \begin{equation*}I(\sigma)=C^{\infty}_0(\sigma)\supset C^{\infty}_{1}(\sigma)\supset
\cdots\supset C^{\infty}_{r-1}(\sigma)\supset C^{\infty}_{r}(\sigma)=\{0\},\end{equation*} the {\it Bruhat
filtration of $I(\sigma)$}. Write \[\mathrm{gr}I(\sigma)=\sum_{k\geq 0}C^{\infty}_{k}(\sigma)/
C^{\infty}_{k+1}(\sigma)\] for the graded module corresponding to the Bruhat filtration of $I(\sigma)$. Let \[I_{k,\sigma}=C^{\infty}_{k}(\sigma)^{\perp}\] be the space of $E(\sigma)$-distributions vanishing on all
sections in $C^{\infty}_{k}(\sigma)$. Then, $I_{0,\sigma}=0$ and $I_{r,\sigma}=I(\sigma)'$.

For each $w\in W$, choose a subanalytic open neighborhood $U$ of $C(w)$ in $X$. Let $\mathcal{T}(C(w),E(\sigma))$
be the space of {\it tempered $E(\sigma)$-distributions on $U$ with support in $C(w)$} and $J(w,\sigma)$ be the
strong dual of $\mathcal{T}(C(w),E(\sigma))$. By Lemma \ref{L:2.6}, $\mathcal{T}(C(w),E(\sigma))$ and $J(w,\sigma)$
do not depend on the choice of the subanalytic open neighborhood $U$. Put \[V^{w}=N^{ww_{0}}\dot{w}P/P.\] As shown
in \cite[p. 166]{Casselman-Hecht-Milicic} and \cite[Lemma 2.4]{Abe}, $V^{w}$ is a subanalytic open neighborhood of
$C(w)$ in $X$. Hence, we could take $U=V^{w}$ while defining $\mathcal{T}(C(w),E(\sigma))$ and $J(w,\sigma)$.

\begin{lemma}\label{L:Bruhat2}
For each $k$ ($1\leq k\leq r$), the space $I_{k,\sigma}=(C^{\infty}_{k}(\sigma))^{\perp}$ is equal to the space
of $E(\sigma)$-distributions supported in $Z_{k}$. There is an exact sequence
\[0\rightarrow C^{\infty}_{k-1}(\sigma)^{\perp}\rightarrow C^{\infty}_{k}(\sigma)^{\perp}\rightarrow
\mathcal{T}(C(w_{k}),E(\sigma))\rightarrow 0.\]
\end{lemma}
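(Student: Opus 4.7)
The plan is to handle the two assertions in sequence: first identify $I_{k,\sigma}$ with the space of $E(\sigma)$-distributions on $X$ supported in $Z_{k}$, and then obtain the exact sequence by applying Lemma \ref{L:tempered3}. A preliminary point to verify is that $Z_{k}$ is closed in $X$: the Bruhat closure of $C(w)$ equals $\bigsqcup_{w'\leq w}C(w')$, and because $\dim C(w')<\dim C(w)$ whenever $w'<w$ in the Bruhat order, every boundary cell of $C(w_{k})$ has strictly smaller dimension and therefore lies among $\{C(w_{1}),\dots,C(w_{k-1})\}$ under our ordering. Hence $\overline{C(w_{i})}\subset Z_{i}$ for every $i$, and an easy induction on $k$ shows $Z_{k}=\overline{Z_{k}}$.

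For the identification $I_{k,\sigma}=\{T\in C^{\infty}(X,E(\sigma))':\supp T\subset Z_{k}\}$, one direction uses that compactness of $X$ gives any distribution $T$ a finite global order $m$; then Lemma 2.1 of \cite{Casselman-Hecht-Milicic}, already invoked in the proof of Lemma \ref{L:2.2}, forces $T$ to vanish on every $s\in C^{\infty}_{k}(\sigma)$, since all derivatives of $s$ of order $\leq m$ vanish on $Z_{k}\supset\supp T$. Conversely, if $T\in C^{\infty}_{k}(\sigma)^{\perp}$ but $\supp T\not\subset Z_{k}$, choose $x\in\supp T\setminus Z_{k}$ and a bump function $\phi\in C^{\infty}(X)$ with $\phi(x)=1$ and $\supp\phi\cap Z_{k}=\emptyset$; then $\phi s\in C^{\infty}_{k}(\sigma)$ for every smooth section $s$, and some such $s$ satisfies $T(\phi s)\neq 0$ because $\phi T\neq 0$, a contradiction.

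For the exact sequence, the inclusion $C^{\infty}_{k}(\sigma)\subset C^{\infty}_{k-1}(\sigma)$ immediately gives the injection $I_{k-1,\sigma}\hookrightarrow I_{k,\sigma}$. To realize the quotient as $\mathcal{T}(C(w_{k}),E(\sigma))$, take $U=X\setminus Z_{k-1}$, which is open in $X$ and, by the complement theorem for subanalytic sets, is again subanalytic. Since $\overline{C(w_{k})}\cap U=C(w_{k})$ by the preliminary step, $U$ is a subanalytic open neighborhood of $C(w_{k})$ in the sense of Section \ref{S:E-distribution}. Applying Lemma \ref{L:tempered3} with $Y=C(w_{k})$ and $Z=X\setminus U=Z_{k-1}$, combined with the support identification above, yields
\[\mathcal{T}(C(w_{k}),E(\sigma))=C^{\infty}(X,E(\sigma))'_{Z_{k}}/C^{\infty}(X,E(\sigma))'_{Z_{k-1}}=I_{k,\sigma}/I_{k-1,\sigma}.\]
The only delicate point is the subanalyticity of $U$, which relies on closure of the subanalytic category under complements; everything else is pure unwinding of definitions, the bump-function support characterization of distributions, and the already established Lemma \ref{L:tempered3}.
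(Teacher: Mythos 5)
Your proof is correct and takes essentially the same route as the paper: your support identification is just the dual form of Lemma \ref{L:2.2} (you argue it directly via the finite-order statement of \cite[Lemma 2.1]{Casselman-Hecht-Milicic} and a bump-function argument), and the exact sequence is exactly the paper's appeal to Lemma \ref{L:tempered3} with $U=X\setminus Z_{k-1}$. The additional verifications you include --- closedness of $Z_{k}$ from the dimension ordering of the cells and subanalyticity of the complement, so that $X\setminus Z_{k-1}$ is a legitimate subanalytic neighborhood of $C(w_{k})$ --- are precisely the details the paper leaves implicit, and they are correct.
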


\begin{proof}
The first statement follows from Lemma \ref{L:2.2} by dualizing. The second statement is a special case of Lemma
\ref{L:tempered3}.
\end{proof}

By Lemma \ref{L:Bruhat2}, we have \[I_{k,\sigma}/I_{k-1,\sigma}=\mathcal{T}(C(w_{k}),E(\sigma)),\ \forall k,
1\leq k\leq r.\]

\begin{theorem}(\cite[Theorem 4.1]{Casselman-Hecht-Milicic})\label{T:4.1}
For each $k$ ($1\leq k\leq r$), there is a short exact sequence \begin{equation*}
0\longrightarrow C^{\infty}_{k}(\sigma)\longrightarrow C^{\infty}_{k-1}(\sigma)\longrightarrow J(w_{k},\sigma)
\longrightarrow 0.\end{equation*} There is an isomorphism of $\mathcal{U}(\mathfrak{g})$-modules:
\begin{equation*}
\mathrm{gr}I(\sigma)\cong\bigoplus_{k\in\mathbb{Z}_{1\leq k\leq r}}J(w_{k},\sigma).
\end{equation*}
\end{theorem}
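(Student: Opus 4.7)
The plan is to deduce the short exact sequence by a duality argument from Lemma \ref{L:Bruhat2}, which already supplies the short exact sequence
\[0 \to I_{k-1,\sigma} \to I_{k,\sigma} \to \mathcal{T}(C(w_k), E(\sigma)) \to 0\]
of DNF spaces, together with the identification $I_{k,\sigma} = C^{\infty}_k(\sigma)^\perp$. The main tool is the standard NF/DNF duality: for closed subspaces $W_1 \subset W_2$ of a nuclear Fr\'echet space $V$, the strong dual of $W_2/W_1$ is naturally isomorphic to $W_1^\perp/W_2^\perp$, and reflexivity of NF spaces allows one to recover $W_2/W_1$ from $(W_2/W_1)'$ by a second dualization.

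First I would verify that each $C^{\infty}_k(\sigma)$ is a closed $\mathcal{U}(\mathfrak{g})$-submodule of the NF space $C^{\infty}(X, E(\sigma))$. Closedness is immediate from the definition by simultaneous vanishing of all derivatives along the closed set $Z_k$. Invariance under $\mathcal{U}(\mathfrak{g})$ is a local calculation: in coordinates near a point $p \in Z_k$, if a section vanishes together with all its derivatives at $p$, then applying any smooth vector field and iterating produces sections which again vanish to infinite order at $p$, as verified by induction on the order of derivatives using the Leibniz rule. Since the $\mathfrak{g}$-action on $C^{\infty}(X, E(\sigma))$ arises from differentiating the $G$-action and thus acts by smooth differential operators, this shows that $\mathcal{U}(\mathfrak{g}) \cdot C^{\infty}_k(\sigma) \subset C^{\infty}_k(\sigma)$, and dually that $I_{k,\sigma} = C^{\infty}_k(\sigma)^\perp$ is $\mathcal{U}(\mathfrak{g})$-invariant inside $I(\sigma)'$.

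Next, I apply the NF duality with $V = C^{\infty}(X, E(\sigma))$, $W_1 = C^{\infty}_k(\sigma)$, and $W_2 = C^{\infty}_{k-1}(\sigma)$ to obtain, as strong duals,
\[\bigl(C^{\infty}_{k-1}(\sigma)/C^{\infty}_k(\sigma)\bigr)' \;\cong\; C^{\infty}_k(\sigma)^\perp/C^{\infty}_{k-1}(\sigma)^\perp \;=\; I_{k,\sigma}/I_{k-1,\sigma} \;\cong\; \mathcal{T}(C(w_k), E(\sigma)),\]
the last step by Lemma \ref{L:Bruhat2}. Taking strong duals of both ends and invoking the reflexivity of the NF space $C^{\infty}_{k-1}(\sigma)/C^{\infty}_k(\sigma)$ identifies that quotient with $J(w_k, \sigma) = \mathcal{T}(C(w_k), E(\sigma))'$, which together with the inclusion $C^{\infty}_k(\sigma) \hookrightarrow C^{\infty}_{k-1}(\sigma)$ is precisely the short exact sequence asserted in the theorem. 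The graded module isomorphism $\mathrm{gr}\,I(\sigma) \cong \bigoplus_{k=1}^{r} J(w_k, \sigma)$ then follows at once by definition of the associated graded and by direct-summing the isomorphisms just obtained.

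The main technical point to check carefully is that every arrow in the argument is a topological (strict) morphism of NF or DNF spaces, so that the strong-dual and reflexivity formalism actually applies, and that all identifications are $\mathcal{U}(\mathfrak{g})$-equivariant. The latter is a naturality statement: the $\mathfrak{g}$-action on sections dualizes to the canonical action on distributions, preserves both the subspaces $C^{\infty}_k(\sigma)$ and their annihilators $I_{k,\sigma}$, and commutes with passage to quotients and to annihilators, so the identifications constructed via these operations are automatically $\mathcal{U}(\mathfrak{g})$-equivariant.
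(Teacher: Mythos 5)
Your proposal is correct and follows essentially the same route as the paper: the paper's proof of Theorem \ref{T:4.1} is precisely ``dualize the exact sequence of Lemma \ref{L:Bruhat2}'', with the graded isomorphism an immediate consequence. You have simply spelled out the NF/DNF duality, reflexivity, closedness, and $\mathcal{U}(\mathfrak{g})$-equivariance details that the paper leaves implicit.
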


\begin{proof}
By dualizing, the first statement follows from Lemma \ref{L:Bruhat2}. The second statement is a consequence of the
first one.
\end{proof}

As in \S \ref{S:E-distribution}, let $M_{p,w,\sigma}$ denote the subspace of $\mathcal{S}(V^{w},E(\sigma))$
consisting of sections which vanish with all derivatives of order $\leq p$ along $C(w)$. Then,
\[F_{p}\mathcal{T}(C(w),E(\sigma))=M_{p,w,\sigma}^{\perp}\ (\forall p\in\mathbb{Z}).\] Put
\[\Gr^{p}\mathcal{T}(C(w),E(\sigma))=F_{p}\mathcal{T}(C(w),E(\sigma))/F_{p-1}\mathcal{T}(C(w),E(\sigma)).\]

\begin{lemma}(\cite[Lemma 3.5]{Casselman-Hecht-Milicic})\label{L:3.5}
For any $p\in\mathbb{Z}$, the subspace $F_{p}\mathcal{T}(C(w),E(\sigma))$ of $\mathcal{T}(C(w),E(\sigma))$ is
$\mathcal{U}(\mathfrak{p})$-invariant.
\end{lemma}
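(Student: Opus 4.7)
The plan is to prove Lemma~\ref{L:3.5} by a duality argument. Since $F_{p}\mathcal{T}(C(w),E(\sigma))=M_{p,w,\sigma}^{\perp}$ with respect to the pairing between $\mathcal{T}(C(w),E(\sigma))$ and $\mathcal{S}(V^{w},E(\sigma))$, and since the $\mathcal{U}(\mathfrak{g})$-action on $\mathcal{T}(C(w),E(\sigma))$ is (up to sign) the transpose of the $\mathcal{U}(\mathfrak{g})$-action on $\mathcal{S}(V^{w},E(\sigma))$, it suffices to check that the closed subspace $M_{p,w,\sigma}\subset\mathcal{S}(V^{w},E(\sigma))$ is stable under the action of $\mathcal{U}(\mathfrak{p})$.

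The geometric heart of the argument is the assertion that $C(w)$ is stable under the left $P$-action on $X=G/P$. Writing $P=MAN$ and using that $MA$ normalizes $N$ while $\dot{w}^{-1}MA\dot{w}=MA\subset P$ (because $N_{G}(A)$ normalizes both $M=Z_{K}(\mathfrak{a}_{0})$ and $A$), a short direct manipulation gives $pn\dot{w}P\in N\dot{w}P=C(w)$ for every $p\in P$ and every $n\in N$. Hence for any $X\in\mathfrak{p}$, the vector field $\tilde{X}$ on $X$ induced by the left $G$-action is tangent to $C(w)$ at each point of $C(w)$.

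Next I would pass to a local trivialization of $E(\sigma)$ on a coordinate chart of $V^{w}$ adapted to $C(w)$, i.e.\ with coordinates $(x_{1},\dots,x_{k},x_{k+1},\dots,x_{n})$ such that $C(w)=\{x_{1}=\cdots=x_{k}=0\}$. In this trivialization the action of $X\in\mathfrak{p}$ on a section $\tilde s$ takes the form
\[
X\cdot\tilde s=-\tilde{X}(\tilde s)+B(y)\tilde s,
\]
where $B$ is a smooth $\End(V_{\sigma})$-valued function coming from the bundle cocycle, and $\tilde X=\sum_{i}a_{i}(x)\partial_{i}$ is tangent to $C(w)$, i.e.\ $a_{1},\dots,a_{k}\in\mathcal{I}_{C(w)}$. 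The condition $s\in M_{p,w,\sigma}$ becomes (componentwise) membership in $\mathcal{I}_{C(w)}^{p+1}$. Pointwise multiplication by $B(y)$ preserves this ideal trivially, and a direct Leibniz computation shows that a tangent vector field preserves $\mathcal{I}_{C(w)}^{p+1}$: if $i\leq k$ then $\partial_{i}$ lowers the transverse order by at most one, which is compensated by $a_{i}\in\mathcal{I}_{C(w)}$, while if $i>k$ then $\partial_{i}$ preserves the transverse order. Iterating, we obtain $\mathcal{U}(\mathfrak{p})\cdot M_{p,w,\sigma}\subset M_{p,w,\sigma}$, and taking perpendiculars gives the lemma.

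The main obstacle is not conceptual but administrative: one must make sure that the several incarnations of the $\mathcal{U}(\mathfrak{g})$-action all agree, namely the action on $C^{\infty}(X,E(\sigma))'$ via the global $G$-representation, the induced action on the Bruhat graded quotient $I_{k,\sigma}/I_{k-1,\sigma}=\mathcal{T}(C(w),E(\sigma))$ (which exists because differential operators preserve support), and the coordinate expression used in the last step. Once these identifications are checked, the lemma reduces to the essentially elementary geometric input $P\cdot C(w)=C(w)$.
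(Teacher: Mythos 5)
Your proof is correct and follows essentially the same route as the paper's (which, following \cite[Lemma 3.5]{Casselman-Hecht-Milicic}, rests on the $P$-stability of the Bruhat cell $C(w)$ and duality against the space of sections vanishing to order $\leq p$ along $C(w)$). Your additional local-coordinate Leibniz verification that vector fields tangent to $C(w)$ preserve this vanishing condition merely spells out, at the infinitesimal level, what the paper treats as immediate from $P\cdot C(w)=C(w)$.
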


For $Y\in\mathfrak{g}_{0}$, define a differential operator $\tilde{L}_{Y}$ on $C^{\infty}(X,E(\sigma))$ by setting
\[(\tilde{L}_{Y}\varphi)(g)=\frac{d}{dt}\varphi(\exp(-tY)g)|_{t=0}\] for any $\varphi\in C^{\infty}(X,E(\sigma))$
and any $g\in G$, where $\varphi$ is regarded as a $V_{\sigma}$-valued function on $G$. We have \[\tilde{L}_{[Y_{1},
Y_{2}]}=\tilde{L}_{Y_{1}}\tilde{L}_{Y_{2}}-\tilde{L}_{Y_{2}}\tilde{L}_{Y_{1}},\ \forall Y_{1},Y_{2}\in
\mathfrak{g}_{0}.\] Then, $\tilde{L}_{\cdot}$ extends to an action of $\mathcal{U}(\mathfrak{g})$ on
$C^{\infty}(X,E(\sigma))$ by setting \[\tilde{L}(\lambda Y_{1}\cdots Y_{k})\varphi=\lambda\tilde{L}_{Y_{1}}(\cdots
(\tilde{L}_{Y_{k}}\varphi)\cdots)\] for any $Y_{1},\dots,Y_{k}\in\mathfrak{g}_{0}$, $\lambda\in\mathbb{C}$ and
$\varphi\in C^{\infty}(X,E(\sigma))$. For any $Y_{1},\dots,Y_{k}\in\mathfrak{g}$, define \[(Y_{1}\cdots Y_{k})^{t}
=(-1)^{k}Y_{k}\cdots Y_{1}\] and extends it linearly to all elements in $\mathcal{U}(\mathfrak{g})$. Then,
$Y\mapsto Y^{t}$ ($Y\in\mathcal{U}(\mathfrak{g})$) is an anti-automorphism of $\mathcal{U}(\mathfrak{g})$. For an
element $Y\in\mathcal{U}(\mathfrak{g})$ and a distribution $T\in C^{\infty}(X,E(\sigma))'$, define a distribution
$\tilde{L}_{Y}(T)\in C^{\infty}(X,E(\sigma))'$ by setting \[\langle\tilde{L}_{Y}(T),\varphi\rangle=\langle T,
\tilde{L}_{Y^{t}}(\varphi)\rangle,\ \forall\varphi\in C^{\infty}(X,E(\sigma)).\]

\begin{lemma}\label{L:Abe-A.3}
We have the following assertions:
\begin{enumerate}
\item[(i)]For each $w\in W$, we have \[F_{0}\mathcal{T}(C(w),E(\sigma))=\mathcal{S}(C(w),E(\sigma)|_{C(w)})'.\]
\item[(ii)]For each $p\geq 0$, the map $(Y,T)\mapsto\tilde{L}_{Y}(T)$ gives an isomorphism
\[\mathcal{U}_{p}(\mathfrak{n}^{ww_{0}}\cap\bar{\mathfrak{n}})\otimes_{\mathbb{C}}F_{0}\mathcal{T}(C(w),E(\sigma))
\cong F_{p}\mathcal{T}(C(w),E(\sigma)).\]
\item[(iii)]The map $(Y,T)\mapsto\tilde{L}_{Y}(T)$ gives an isomorphism \[\mathcal{U}(\mathfrak{n}^{ww_{0}})
\otimes_{\mathcal{U}(\mathfrak{n}^{ww_{0}}\cap\mathfrak{n})}F_{0}\mathcal{T}(C(w),E(\sigma))\cong
\mathcal{T}(C(w),E(\sigma)).\]
\end{enumerate}
\end{lemma}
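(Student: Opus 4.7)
The plan is to apply the Euclidean linear model of Example \ref{E:linear2} after setting up a global diffeomorphism $\Psi$ that carries $C(w) \subset V^{w}$ onto a linear subspace of a real vector space and, crucially, turns the left action of $Y \in \mathfrak{n}^{ww_{0}} \cap \bar{\mathfrak{n}}$ into a transversal differential operator with transversal-coordinate coefficients. The root set $ww_{0}\Phi^{+}$ is closed under addition and decomposes as $(ww_{0}\Phi^{+} \cap \Phi^{+}) \sqcup (ww_{0}\Phi^{+} \cap \Phi^{-})$, so both $\mathfrak{n}_{0}^{ww_{0}} \cap \mathfrak{n}_{0}$ and $\mathfrak{n}_{0}^{ww_{0}} \cap \bar{\mathfrak{n}}_{0}$ are subalgebras; hence the product map $\mu: (\bar{N} \cap N^{ww_{0}}) \times (N \cap N^{ww_{0}}) \to N^{ww_{0}}$, $(n_{2}, n_{1}) \mapsto n_{2}n_{1}$, is a polynomial bijection between unipotent real algebraic varieties. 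Composing with $\exp$ on each factor and then with $\psi_{w}: N^{ww_{0}} \to V^{w}$, $n \mapsto n\dot{w}P/P$, yields a global diffeomorphism
$$\Psi: (\mathfrak{n}_{0}^{ww_{0}} \cap \bar{\mathfrak{n}}_{0}) \times (\mathfrak{n}_{0}^{ww_{0}} \cap \mathfrak{n}_{0}) \to V^{w}$$
sending $\{0\} \times (\mathfrak{n}_{0}^{ww_{0}} \cap \mathfrak{n}_{0})$ onto $C(w)$. Writing $Z''$ and $Z'$ for the coordinates on the two factors and trivialising $E(\sigma)$ over $V^{w}$ as $N^{ww_{0}} \times V_{\sigma}$ via $\psi_{w}$, the pair $(V^{w}, C(w))$ is modelled by $(\mathbb{R}^{n}, \mathbb{R}^{k})$ of Example \ref{E:linear2} with coefficients in $V_{\sigma}$. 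Part (i) is then the tensored-with-$V_{\sigma}$ form of equation \eqref{Eq:F0}: dualise the short exact sequence $0 \to M_{0,w,\sigma} \to \mathcal{S}(V^{w}, E(\sigma)) \to \mathcal{S}(C(w), E(\sigma)|_{C(w)}) \to 0$, whose right-hand map is restriction to $C(w)$ and whose surjectivity follows by multiplying by a fixed Schwartz cutoff $\phi(Z'')$ with $\phi(0) = 1$.

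The geometric content of part (ii) is that for $Y \in \mathfrak{n}^{ww_{0}} \cap \bar{\mathfrak{n}}$, left multiplication by $\exp(tY)$ sends $(n_{2}, n_{1})$ to $(\exp(tY) n_{2}, n_{1})$, acting only on the first factor. Thus in coordinates $\tilde{L}_{Y}$ is a first-order operator $\sum_{j} g_{ij}(Z'') \partial_{z''_{j}}$ involving only transversal derivatives and coefficients depending only on $Z''$. Choosing a basis $Y_{k+1}, \ldots, Y_{n}$ of $\mathfrak{n}_{0}^{ww_{0}} \cap \bar{\mathfrak{n}}_{0}$ dual to $z''_{k+1}, \ldots, z''_{n}$, the Baker--Campbell--Hausdorff formula gives $g_{ij}(0) = -\delta_{ij}$, so one may write
$$\tilde{L}_{Y_{i}} = -\partial_{z''_{i}} + \tilde{D}_{i}, \qquad \tilde{D}_{i} = \sum_{j,l} z''_{l} h_{ijl}(Z'') \partial_{z''_{j}},$$
where the $h_{ijl}$ are smooth. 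Because multiplication by $z''_{l}$ lowers transversal degree by $1$, application of $\partial_{z''_{j}}$ raises it by $1$, and smooth-function multiplication preserves it, the correction $\tilde{D}_{i}$ sends $F_{p}\mathcal{T}$ into itself while $-\partial_{z''_{i}}$ sends $F_{p}\mathcal{T}$ into $F_{p+1}\mathcal{T}$. A direct check using $Y^{t} = -Y$ and $g_{ij}(0) = -\delta_{ij}$ shows $\tilde{L}_{Y_{i}} T = -\partial_{z''_{i}} T$ for every $T \in F_{0}\mathcal{T}$ (which is $T_{0}(Z') \otimes \delta(Z'')$ in the model, on which $\tilde{L}_{Y_{i}}$ acts only via $\tilde{L}_{Y_{i}}\delta(Z'') = -\partial_{z''_{i}}\delta(Z'')$). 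A routine induction on $p$ then gives
$$\tilde{L}_{Y_{i_{1}}} \cdots \tilde{L}_{Y_{i_{p}}} T \equiv (-1)^{p} \partial_{z''_{i_{1}}} \cdots \partial_{z''_{i_{p}}} T \pmod{F_{p-1}\mathcal{T}(C(w), E(\sigma))}$$
for $T \in F_{0}\mathcal{T}$, which combined with the explicit decomposition \eqref{Eq:Fp} shows the associated graded map $\mathcal{U}_{p}/\mathcal{U}_{p-1}(\mathfrak{n}^{ww_{0}} \cap \bar{\mathfrak{n}}) \otimes F_{0}\mathcal{T} \to F_{p}\mathcal{T}/F_{p-1}\mathcal{T}$ is an isomorphism; induction on $p$ yields the filtered isomorphism in (ii).

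Part (iii) follows by PBW: $\mathcal{U}(\mathfrak{n}^{ww_{0}})$ is a free right $\mathcal{U}(\mathfrak{n}^{ww_{0}} \cap \mathfrak{n})$-module with basis $\mathcal{U}(\mathfrak{n}^{ww_{0}} \cap \bar{\mathfrak{n}})$, and Lemma \ref{L:3.5} makes $F_{0}\mathcal{T}$ a $\mathcal{U}(\mathfrak{n}^{ww_{0}} \cap \mathfrak{n})$-module (since $\mathfrak{n}^{ww_{0}} \cap \mathfrak{n} \subset \mathfrak{p}$), so $\mathcal{U}(\mathfrak{n}^{ww_{0}}) \otimes_{\mathcal{U}(\mathfrak{n}^{ww_{0}} \cap \mathfrak{n})} F_{0}\mathcal{T} \cong \mathcal{U}(\mathfrak{n}^{ww_{0}} \cap \bar{\mathfrak{n}}) \otimes F_{0}\mathcal{T}$, and the right-hand side is $\mathcal{T}(C(w), E(\sigma))$ by passing to the limit $p \to \infty$ in (ii). The pivotal technical point, and the main obstacle, is securing the coordinate system in which $\tilde{L}_{Y}$ for $Y$ transversal has purely transversal coefficients: a naive exponential chart on $\mathfrak{n}_{0}^{ww_{0}}$ would let BCH corrections mix tangent coordinates $z'$ into the coefficients of $\partial_{z''_{j}}$, and $\tilde{D}_{i}$ would then fail to preserve $F_{p}\mathcal{T}$; the subgroup factorisation $N^{ww_{0}} = (\bar{N} \cap N^{ww_{0}})(N \cap N^{ww_{0}})$ is precisely what avoids this.
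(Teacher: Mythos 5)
Your proposal is correct and follows essentially the same route as the paper, whose proof of this lemma is simply to invoke the Euclidean model of Example \ref{E:linear2} (equations \eqref{Eq:F0}, \eqref{Eq:TY}, \eqref{Eq:Fp}) together with the PBW factorization $\mathcal{U}(\mathfrak{n}^{ww_{0}})\cong\mathcal{U}(\mathfrak{n}^{ww_{0}}\cap\bar{\mathfrak{n}})\otimes\mathcal{U}(\mathfrak{n}^{ww_{0}}\cap\mathfrak{n})$. What you add — the explicit chart $\Psi$ built from the factorization $N^{ww_{0}}=(\bar{N}\cap N^{ww_{0}})(N\cap N^{ww_{0}})$, the computation that $\tilde{L}_{Y}$ for transversal $Y$ is a vector field in the $Z''$-variables with $Z''$-only coefficients and leading term $-\partial_{z''_{i}}$, and the resulting graded/filtered isomorphism argument — is exactly the bridge between the $\tilde{L}$-action and the coordinate derivatives that the paper leaves implicit, so it is a legitimate fleshed-out version of the same argument rather than a different one.
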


\begin{proof}
The equality in (i) follows from \eqref{Eq:F0}. The equality in (ii) follows form \eqref{Eq:Fp}. Since \[F_{0}\mathcal{T}
(C(w),E(\sigma))\otimes_{\mathcal{U}(\mathfrak{n}^{ww_{0}}\cap\mathfrak{n})}\mathcal{U}(\mathfrak{n}^{ww_{0}})
\cong F_{0}\mathcal{T}(C(w),E(\sigma))\otimes_{\mathbb{C}}\mathcal{U}(\mathfrak{n}^{ww_{0}}\cap\bar{\mathfrak{n}}),\]
the equality in (iii) follows from \eqref{Eq:TY}.
\end{proof}

Let $J$ be the ideal of polynomials in $R(V^{w})$ vanishing along $C(w)$. From the multiplication map
\[J\otimes_{R(V^{w})}F_{p}\mathcal{T}(C(w),E(\sigma))\rightarrow F_{p-1}\mathcal{T}(C(w),E(\sigma)),\] we get \[J^{p}/J^{p+1}\otimes_{R(C(w)))}\Gr^{p}\mathcal{T}(C(w),E(\sigma))\rightarrow F_{0}\mathcal{T}(C(w),E(\sigma)).\] Put \[L_{p}(w):=\mathrm{Hom}_{R(C(w))}(J^{p}/J^{p+1},R(C(w))).\] Notice that $J^{p}/J^{p+1}$ is a free $R(C(w))$-module of
finite rank, and so is $L_{p}(w)$. Then, we get a $\mathcal{U}(\mathfrak{p})$-module homomorphism \[\alpha_{p}:
\Gr^{p}\mathcal{T}(C(w),E(\sigma))\rightarrow L_{p}(w)\otimes_{R(C(w))}F_{0}\mathcal{T}(C(w),E(\sigma)),\]
which is clearly an isomorphism. Then, we get the following Lemma \ref{L:4.3}.

\begin{lemma}(\cite[Lemma 4.3]{Casselman-Hecht-Milicic})\label{L:4.3}
As $\mathcal{U}(\mathfrak{p})$ modules, we have \begin{eqnarray*}&&\Gr^{p}\mathcal{T}(C(w),E(\sigma))\\&\cong&L_{p}(w)
\otimes_{R(C(w))}F_{0}\mathcal{T}(C(w),E(\sigma))\\&\cong&L_{p}(w)\otimes_{R(C(w))}\mathcal{S}(C(w),E(\sigma))'.
\end{eqnarray*}
\end{lemma}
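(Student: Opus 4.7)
The plan is to verify that $\alpha_p$ is a bijective $\mathcal{U}(\mathfrak{p})$-equivariant map and then read off the second displayed isomorphism from Lemma \ref{L:Abe-A.3}(i). Since the excerpt already constructs $\alpha_p$, the core task is bijectivity; I would prove this by reducing both sides to a common model built from Lemma \ref{L:Abe-A.3}(ii) and the affine geometry of the Bruhat cell.

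First I would analyze the target of $\alpha_p$. Via the identification $V^w \cong N^{ww_0}$ with $C(w)$ corresponding to the linear subgroup $N \cap N^{ww_0}$ recalled in Section \ref{S:Bruhat}, both $V^w$ and $C(w)$ are affine spaces. Left $N$-translation canonically trivializes the conormal bundle of $C(w)$ in $V^w$ with fiber $(\mathfrak{n}^{ww_0}/(\mathfrak{n}^{ww_0} \cap \mathfrak{n}))^{\ast} \cong (\mathfrak{n}^{ww_0} \cap \bar{\mathfrak{n}})^{\ast}$, yielding
\[
J^p/J^{p+1} \cong R(C(w)) \otimes_{\mathbb{C}} \mathrm{Sym}^p\bigl((\mathfrak{n}^{ww_0} \cap \bar{\mathfrak{n}})^{\ast}\bigr)
\]
and dually $L_p(w) \cong R(C(w)) \otimes_{\mathbb{C}} \mathrm{Sym}^p(\mathfrak{n}^{ww_0} \cap \bar{\mathfrak{n}})$. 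Consequently the target of $\alpha_p$ becomes isomorphic to $\mathrm{Sym}^p(\mathfrak{n}^{ww_0} \cap \bar{\mathfrak{n}}) \otimes_{\mathbb{C}} F_0 \mathcal{T}(C(w),E(\sigma))$. For the source, Lemma \ref{L:Abe-A.3}(ii) together with the PBW filtration of $\mathcal{U}(\mathfrak{n}^{ww_0} \cap \bar{\mathfrak{n}})$ produces the identical description of $\Gr^p \mathcal{T}(C(w),E(\sigma))$. To verify that $\alpha_p$ is the identity under these identifications, I would compute $\alpha_p(T)(f) = fT$ on a PBW generator $T = \tilde{L}_Y(T_0)$ with $Y \in \mathrm{Sym}^p(\mathfrak{n}^{ww_0} \cap \bar{\mathfrak{n}})$, $T_0 \in F_0 \mathcal{T}(C(w),E(\sigma))$, and $f \in J^p$: using $J \cdot F_0 = 0$ and the Leibniz identity $\tilde{L}_Y(fT_0) = (\tilde{L}_Y f)T_0 + f\tilde{L}_Y(T_0)$ iteratively, the product $f\tilde{L}_Y T_0$ modulo $F_{-1}=0$ collapses to $\pm\bigl((\tilde{L}_Y f)|_{C(w)}\bigr)T_0$, which is exactly the canonical evaluation pairing between $\mathrm{Sym}^p(\mathfrak{n}^{ww_0} \cap \bar{\mathfrak{n}})$ and its dual. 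Hence $\alpha_p$ is bijective.

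The $\mathcal{U}(\mathfrak{p})$-equivariance of $\alpha_p$ is automatic: it is built from the $P$-equivariant multiplication action of $R(V^w)$ on $\mathcal{T}(C(w),E(\sigma))$ together with the $P$-stable filtration $F\mathcal{T}(C(w),E(\sigma))$ afforded by Lemma \ref{L:3.5}. The second stated isomorphism then follows immediately from Lemma \ref{L:Abe-A.3}(i), which identifies $F_0 \mathcal{T}(C(w),E(\sigma))$ with $\mathcal{S}(C(w),E(\sigma)|_{C(w)})'$. The main obstacle, as I see it, is not analytic depth but careful bookkeeping: one must orient the canonical identifications---in particular, the duality between $\mathrm{Sym}^p(\mathfrak{n}^{ww_0} \cap \bar{\mathfrak{n}})$ and $\mathrm{Sym}^p\bigl((\mathfrak{n}^{ww_0} \cap \bar{\mathfrak{n}})^{\ast}\bigr)$ induced by $(Y,f) \mapsto (\tilde{L}_Y f)|_{C(w)}$---consistently, so that the iterated Leibniz computation and the adjunction defining $\alpha_p$ line up to give the identity (up to an inessential sign) on the common model.
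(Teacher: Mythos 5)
Your argument is correct and takes essentially the same route as the paper: the isomorphism is realized by the map $\alpha_{p}$ coming from multiplication by $J^{p}$ on the transversal-degree filtration, its $\mathcal{U}(\mathfrak{p})$-equivariance rests on Lemma \ref{L:3.5}, and the second isomorphism is exactly Lemma \ref{L:Abe-A.3}(i). The only difference is that you supply, via the PBW/symbol identification from Lemma \ref{L:Abe-A.3}(ii) and the iterated Leibniz computation, the verification of bijectivity that the paper (following Casselman--Hecht--Mili\v{c}i\'c) records only as ``clearly an isomorphism.''
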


\begin{lemma}(\cite[Lemma 4.4]{Casselman-Hecht-Milicic})\label{L:4.4}
The $R(C(w))$ module $L_{p}(w)$ admits a finite increasing filtration by $N$-equivariant $R(C(w))$-submodules
such that each graded piece is isomorphic to $R(C(w))$ as an $N$-equivariant $R(C(w))$ module.
\end{lemma}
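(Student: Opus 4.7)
The plan is to identify $L_p(w)$, as an $N$-equivariant $R(C(w))$-module, with the space of regular sections of a homogeneous vector bundle on $C(w)$, and then filter it by using unipotency of the stabilizer subgroup.

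First, I would identify $V^w$ with $\mathfrak{n}^{ww_0}$ (via the exponential map composed with the orbit map at $x_w$). Under this identification, the subvariety $C(w)=(N\cap N^{ww_0})\cdot x_w\subset V^w$ corresponds to the linear subspace $\mathfrak{n}^{ww_0}\cap\mathfrak{n}$, and the direct sum decomposition $\mathfrak{n}^{ww_0}=(\mathfrak{n}^{ww_0}\cap\mathfrak{n})\oplus(\mathfrak{n}^{ww_0}\cap\bar{\mathfrak{n}})$ exhibits $V^w$ as a trivial vector bundle over $C(w)$ with fiber $\mathfrak{n}^{ww_0}\cap\bar{\mathfrak{n}}$. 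Thus the ideal $J\subset R(V^w)$ of functions vanishing on $C(w)$ is generated by the linear coordinates on $\mathfrak{n}^{ww_0}\cap\bar{\mathfrak{n}}$, and one obtains the $R(C(w))$-module isomorphism
\[
J^p/J^{p+1}\;\cong\;R(C(w))\otimes_{\mathbb{C}}S^p\bigl((\mathfrak{n}^{ww_0}\cap\bar{\mathfrak{n}})^{\ast}\bigr),
\]
whence, dually,
\[
L_p(w)\;\cong\;R(C(w))\otimes_{\mathbb{C}}S^p\bigl(\mathfrak{n}^{ww_0}\cap\bar{\mathfrak{n}}\bigr).
\]
This makes $J^p/J^{p+1}$ (resp.\ $L_p(w)$) the module of regular sections of the $p$-th symmetric power of the conormal (resp.\ normal) bundle of $C(w)$ in $V^w$.

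Next I would upgrade this to an $N$-equivariant statement. Since $C(w)=N\cdot x_w$ and $\mathrm{Stab}_N(x_w)=N\cap N^w$ (coming from $\mathfrak{n}\cap\mathfrak{p}^w=\mathfrak{n}\cap\mathfrak{n}^w$, as $\mathfrak{n}$ meets the Levi $\mathfrak{m}+\mathfrak{a}$ trivially), one has $C(w)\cong N/(N\cap N^w)$ as $N$-varieties. The normal bundle $\mathcal{N}_{C(w)/V^w}$ is an $N$-equivariant algebraic vector bundle on $C(w)$, so by the associated bundle construction
\[
L_p(w)\;\cong\;\bigl(R(N)\otimes_{\mathbb{C}}S^p(\mathfrak{n}^{ww_0}\cap\bar{\mathfrak{n}})\bigr)^{N\cap N^w},
\]
where $N\cap N^w$ acts on $\mathfrak{n}^{ww_0}\cap\bar{\mathfrak{n}}$ via $\Ad$. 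In particular, every $(N\cap N^w)$-submodule of $S^p(\mathfrak{n}^{ww_0}\cap\bar{\mathfrak{n}})$ yields, functorially, an $N$-equivariant $R(C(w))$-submodule of $L_p(w)$; an $(N\cap N^w)$-trivial one-dimensional subquotient yields a subquotient isomorphic to $R(C(w))$ with its standard $N$-equivariant $R(C(w))$-module structure.

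It then remains to produce a finite filtration of $S^p(\mathfrak{n}^{ww_0}\cap\bar{\mathfrak{n}})$ by $(N\cap N^w)$-submodules whose successive quotients are all trivial one-dimensional $(N\cap N^w)$-modules. This is where the nilpotence of $\mathfrak{n}\cap\mathfrak{n}^w$ enters: decompose $\mathfrak{n}^{ww_0}\cap\bar{\mathfrak{n}}$ into $\mathfrak{a}$-root spaces and totally order the roots appearing so that $\alpha<\alpha'$ whenever $\alpha'-\alpha$ is a sum of roots of $\mathfrak{n}\cap\mathfrak{n}^w$; since any element of $\mathfrak{n}\cap\mathfrak{n}^w$ shifts an $\mathfrak{a}$-weight strictly upward in this order, the filtration by partial sums of weight spaces is $(\mathfrak{n}\cap\mathfrak{n}^w)$-stable with trivial one-dimensional quotients, and by exponentiation it is $(N\cap N^w)$-stable. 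Taking $p$-th symmetric powers gives the required filtration on $S^p(\mathfrak{n}^{ww_0}\cap\bar{\mathfrak{n}})$, and transporting it through the associated bundle construction yields the desired $N$-equivariant filtration of $L_p(w)$ with each graded piece isomorphic to $R(C(w))$.

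The main obstacle I anticipate is purely bookkeeping rather than conceptual: one must check carefully that the exponential/orbit identification $V^w\cong\mathfrak{n}^{ww_0}$ is compatible enough with the action of the stabilizer $N\cap N^w$ to make the isomorphism $L_p(w)\cong(R(N)\otimes S^p(\mathfrak{n}^{ww_0}\cap\bar{\mathfrak{n}}))^{N\cap N^w}$ genuinely $N$-equivariant (the $N$-action on $V^w$ is not by translations on $\mathfrak{n}^{ww_0}$ in the naive sense). Once this compatibility is settled, the rest of the argument is the routine unipotent filtration above.
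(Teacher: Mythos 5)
Your proposal is correct and follows essentially the same route as the paper: identify $L_p(w)$ with the regular sections of an $N$-homogeneous algebraic vector bundle on the affine cell $C(w)\cong N/(N\cap N^{w})$, filter its fibre at $x_w$ using unipotency of the stabilizer $\Stab_N(x_w)=N\cap N^{w}$, and pass the filtration to global sections. The only differences are cosmetic: the paper simply invokes Engel's theorem for the (arbitrary) algebraic fibre representation of the unipotent stabilizer and Serre's exactness of global sections over the affine base, so the explicit identification of the fibre with $S^{p}(\mathfrak{n}^{ww_0}\cap\bar{\mathfrak{n}})$, the root-ordering filtration, and the adjoint-action compatibility you flag as the main obstacle are not actually needed.
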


\begin{proof}
The inclusion $C(w)\subset V^{w}$ can be identified with $N\cap N^{ww_{0}}\subset N^{ww_{0}}$. Taking exponential
map this is equivalent to $\mathfrak{n}\cap\mathfrak{n}^{ww_{0}}\subset\mathfrak{n}^{ww_{0}}$, which is a linear
subspace in a real linear space. Hence, $L_{p}(w)$ is the space of global regular sections of a finite rank free
$\mathcal{O}_{C(w)}$-module $\mathcal{V}_{p}$. This $\mathcal{O}_{C(w)}$-module is a sheaf $\mathcal{V}_{p}$ of
local sections of an $N$-homogeneous algebraic vector bundle $V_{p}$ on $C(w)$. Notice that $N$ is a unipotent
algebraic group. Then, so is $\Stab_{N}(x_{w})=N\cap wNw^{-1}$. Then, the geometric fibre of $V_{p}$ at $x_{w}$
as a representation of $\Stab_{N}(x_{w})$ admits a finite increasing filtration with each graded piece a trivial
representation of $\Stab_{N}(x_{w})$. Then, the $N$-homogeneous algebraic vector bundle $V_{p}$ admits a finite
increasing filtration with each graded piece isomorphic to the trivial bundle on $C(w)$ and with trivial $N$
action. Since $C(w)$ is affine, by Serre's theorem the global section functor is exact. Therefore, $L_{p}(w)$
admits a finite increasing filtration with each graded piece being isomorphic to $R(C(w))$ as an $N$-equivariant
$R(C(w))$ module.
\end{proof}

The following corollary is a direct consequence of Lemmas \ref{L:4.3} and \ref{L:4.4}.

\begin{corollary}(\cite[Lemma 4.3]{Casselman-Hecht-Milicic})\label{C:free}
For each $p\geq 0$, $\Gr^{p}\mathcal{T}(C(w),E(\sigma))$ admits a finite increasing filtration such that each graded
piece is isomorphic to $\mathcal{S}(C(w),E(\sigma))'$ as a $\mathcal{U}(\mathfrak{n})$ module.
\end{corollary}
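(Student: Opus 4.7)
The plan is to deduce the corollary as a direct combination of Lemmas \ref{L:4.3} and \ref{L:4.4}. By Lemma \ref{L:4.3}, there is a $\mathcal{U}(\mathfrak{p})$-module isomorphism
\[
\Gr^{p}\mathcal{T}(C(w),E(\sigma))\cong L_{p}(w)\otimes_{R(C(w))}\mathcal{S}(C(w),E(\sigma))',
\]
and since $\mathfrak{n}\subset\mathfrak{p}$, this is in particular an isomorphism of $\mathcal{U}(\mathfrak{n})$-modules. So it suffices to transport a suitable filtration from $L_{p}(w)$ to the tensor product.

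Next, I will invoke Lemma \ref{L:4.4} to obtain a finite increasing filtration
\[
0=L_{p}(w)^{(0)}\subset L_{p}(w)^{(1)}\subset\cdots\subset L_{p}(w)^{(m)}=L_{p}(w)
\]
by $N$-equivariant $R(C(w))$-submodules, with each quotient $L_{p}(w)^{(i)}/L_{p}(w)^{(i-1)}\cong R(C(w))$ as an $N$-equivariant $R(C(w))$-module. A straightforward induction on $i$ shows each $L_{p}(w)^{(i)}$ is a free $R(C(w))$-module (the extension by a free module of rank one splits as $R(C(w))$-modules). Consequently every short exact sequence $0\to L_{p}(w)^{(i-1)}\to L_{p}(w)^{(i)}\to R(C(w))\to 0$ is split as $R(C(w))$-modules, and so remains short exact after tensoring with $\mathcal{S}(C(w),E(\sigma))'$ over $R(C(w))$. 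This produces an increasing filtration of $\Gr^{p}\mathcal{T}(C(w),E(\sigma))$ whose successive quotients are
\[
R(C(w))\otimes_{R(C(w))}\mathcal{S}(C(w),E(\sigma))'\cong\mathcal{S}(C(w),E(\sigma))'.
\]

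The remaining step is to check that each piece of the tensor-product filtration is $\mathcal{U}(\mathfrak{n})$-stable and that the successive-quotient isomorphisms above are $\mathcal{U}(\mathfrak{n})$-equivariant. This follows from the $N$-equivariance built into Lemma \ref{L:4.4}: differentiating the $N$-action on each $L_{p}(w)^{(i)}$ gives an $\mathfrak{n}$-stable $R(C(w))$-submodule, and after tensoring with $\mathcal{S}(C(w),E(\sigma))'$ (which already carries a $\mathcal{U}(\mathfrak{n})$-action inherited from the $\mathcal{U}(\mathfrak{p})$-module structure in Lemma \ref{L:4.3}) the induced action on $L_{p}(w)^{(i)}\otimes_{R(C(w))}\mathcal{S}(C(w),E(\sigma))'$ is $\mathcal{U}(\mathfrak{n})$-equivariant via the Leibniz rule.

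I do not expect a real obstacle here; the content of the corollary is entirely encapsulated in Lemmas \ref{L:4.3} and \ref{L:4.4}, and the remaining task is the bookkeeping described above, namely verifying the freeness (hence flatness) of the successive $L_{p}(w)^{(i)}$ to ensure exactness is preserved upon tensoring, and checking compatibility of the $N$-action on $L_{p}(w)$ with the $\mathcal{U}(\mathfrak{n})$-action on the tensor product.
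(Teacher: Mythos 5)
Your proposal is correct and follows exactly the route the paper intends: the paper states Corollary \ref{C:free} as a direct consequence of Lemmas \ref{L:4.3} and \ref{L:4.4}, and your argument simply supplies the standard bookkeeping (freeness of the filtration steps so that tensoring over $R(C(w))$ stays exact, plus $\mathcal{U}(\mathfrak{n})$-equivariance from the $N$-equivariance in Lemma \ref{L:4.4}).
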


\section{Casselman-Jacquet modules}\label{S:Abe}

Let $\mathfrak{n'}$ be a nilpotent complex Lie algebra and $U$ be a complex linear representation of
$\mathcal{U}(\mathfrak{n'})$. We define \[U^{[\mathfrak{n'}]}=\{v\in U:\exists k\in\mathbb{Z}_{>0},
\mathfrak{n'}^{k}\cdot v=0\}\] and call it {\it the space of nilpotnet elements} in $U$.

Write $\eta:\mathcal{U}(\mathfrak{n'})\rightarrow\mathbb{C}$ for the augmented homomorphism of
$\mathcal{U}(\mathfrak{n'})$ defined by $\eta(X_{1}\cdots X_{q})=0$ for any $X_{1},\dots,X_{q}\in\mathfrak{n'}$
whenever $q>0$. Then, \[\ker\eta=\mathfrak{n'}\mathcal{U}(\mathfrak{n'})=\mathcal{U}(\mathfrak{n'})\mathfrak{n'}\]
and it is the augmented ideal of $\mathcal{U}(\mathfrak{n'})$.

\begin{lemma}\label{L:finite1}
For any $v\in V$, the following conditions are equivalent: \begin{enumerate}
\item[(1)]For some $k\geq 1$, $(\ker\eta)^{k}v=0$.
\item[(2)]For some $k\geq 1$, $\mathfrak{n'}^{k}\cdot v=0$.
\end{enumerate}
\end{lemma}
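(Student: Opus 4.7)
The plan is to reduce both implications to a single identity of left ideals in $\mathcal{U}(\frn')$:
\[(\ker\eta)^{k} = \mathcal{U}(\frn')\cdot\frn'^{k} \quad\text{for every } k\geq 1.\]
Once this is established, both directions are immediate. For $(1)\Rightarrow(2)$, simply observe that $\frn'\subset\ker\eta$, so $\frn'^{k}\subset(\ker\eta)^{k}$; hence $(\ker\eta)^{k}v=0$ forces $\frn'^{k}v=0$ with the same $k$. For $(2)\Rightarrow(1)$, the identity gives $(\ker\eta)^{k}v = \mathcal{U}(\frn')\cdot(\frn'^{k}v)=0$, so again the same $k$ works.

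To prove the identity, I would argue by induction on $k$. The base case $k=1$ is precisely the equality $\ker\eta=\mathcal{U}(\frn')\frn'$ recorded in the excerpt just above the statement. For the inductive step, assume $(\ker\eta)^{k}=\mathcal{U}(\frn')\frn'^{k}$. Then
\[(\ker\eta)^{k+1} = \ker\eta\cdot(\ker\eta)^{k} = \ker\eta\cdot\mathcal{U}(\frn')\cdot\frn'^{k}.\]
Since $\ker\eta$ is a two-sided ideal of $\mathcal{U}(\frn')$ (indeed, $\ker\eta=\mathcal{U}(\frn')\frn'=\frn'\mathcal{U}(\frn')$), one has $\ker\eta\cdot\mathcal{U}(\frn')\subset\ker\eta=\mathcal{U}(\frn')\frn'$. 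Substituting yields
\[(\ker\eta)^{k+1}\subset\mathcal{U}(\frn')\frn'\cdot\frn'^{k}=\mathcal{U}(\frn')\frn'^{k+1}.\]
The reverse inclusion is clear: any product $X_{1}\cdots X_{k+1}$ with $X_{i}\in\frn'\subset\ker\eta$ lies in $(\ker\eta)^{k+1}$, and this is a left ideal of $\mathcal{U}(\frn')$, hence absorbs $\mathcal{U}(\frn')$ from the left.

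The argument is purely formal; there is no serious obstacle. The main (minor) point to watch is the placement of factors: one has to exploit the two-sidedness of $\ker\eta$ in order to absorb the interior $\mathcal{U}(\frn')$ in the threefold product and close the induction. Note that nilpotence of $\frn'$ is in fact not used in this particular lemma; the same proof works for any finite-dimensional complex Lie algebra.
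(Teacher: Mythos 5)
Your argument is correct, but it is not the route taken in the paper's displayed proof: there, the implication $(2)\Rightarrow(1)$ is proved by observing that $\mathfrak{n'}^{k}v=0$ together with the decomposition $\mathcal{U}(\mathfrak{n'})=\mathcal{U}_{k}(\mathfrak{n'})+\mathcal{U}(\mathfrak{n'})\mathfrak{n'}^{k}$ forces $\dim\mathcal{U}(\mathfrak{n'})v<\infty$, and then applying Engel's theorem to the finite-dimensional module $U=\mathcal{U}(\mathfrak{n'})v$ to obtain a filtration with $(\ker\eta)U_{i}\subset U_{i-1}$, whence $(\ker\eta)^{k'}v=0$ for some (possibly different) exponent $k'$. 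Your proof, via the ideal identity $(\ker\eta)^{k}=\mathcal{U}(\mathfrak{n'})\mathfrak{n'}^{k}$ deduced inductively from the two-sidedness of the augmentation ideal, is exactly the alternative the authors sketch in the one-sentence remark immediately following their proof (``Alternatively, from the fact $\mathfrak{n'}\mathcal{U}(\mathfrak{n'})=\mathcal{U}(\mathfrak{n'})\mathfrak{n'}$, we get $(\ker\eta)^{k}=\mathcal{U}(\mathfrak{n'})\mathfrak{n'}^{k}$\ldots''); you have simply written out the induction that the remark leaves implicit, and your bookkeeping (absorbing the interior $\mathcal{U}(\mathfrak{n'})$ using the two-sided ideal property, and the left-ideal argument for the reverse inclusion) is sound. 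The comparison is as you say: the formal identity gives the equivalence with the same exponent $k$ in both directions and uses no nilpotency at all, so it is valid for an arbitrary Lie algebra, whereas the Engel argument genuinely uses nilpotency of $\mathfrak{n'}$ (to know each $X\in\mathfrak{n'}$ acts nilpotently on $\mathcal{U}(\mathfrak{n'})v$) and is less economical here, though the mechanism it showcases (passing to the finite-dimensional module $\mathcal{U}(\mathfrak{n'})v$) is the one the paper prefers to emphasize.
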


\begin{proof}
$(2)\Rightarrow(1)$. Suppose $\mathfrak{n'}^{k}\cdot v=0$. Then, $(\mathcal{U}(\mathfrak{n'})\mathfrak{n'}^{k})
\cdot v=0$. Since $\mathcal{U}(\mathfrak{n'})=\mathcal{U}_{k}(\mathfrak{n'})+\mathcal{U}(\mathfrak{n'})
\mathfrak{n'}^{k}$ and $\dim\mathcal{U}_{k}(\mathfrak{n'})<\infty$, we get $\dim\mathcal{U}(\mathfrak{n'})v<
\infty$. Put $U:=\mathcal{U}(\mathfrak{n'})v$. Then, $U$ is a finite-dimensional representation of
$\mathcal{U}(\mathfrak{n'})$ and each $X\in\mathfrak{n'}$ acts on $U$ as a nilpotent endomorphism. By Engel's
theorem, there is a filtration $U=U_{0}\supset U_{1}\supset\cdots\supset U_{k}=0$ of $U$ such that
$\mathfrak{n'}U_{i}\subset U_{i-1}$ ($1\leq i\leq k$). Then, $(\ker\eta)U_{i}\subset U_{i-1}$ ($1\leq i\leq k$).
Thus, $(\ker\eta)^{k}v=0$.

$(1)\Rightarrow(2)$. It follows from $\mathfrak{n'}\subset\ker\eta$.
\end{proof}

Alternatively, from the fact $\mathfrak{n'}\mathcal{U}(\mathfrak{n'})=\mathcal{U}(\mathfrak{n'})\mathfrak{n'}$,
we get $\ker\eta^{k}=(\mathcal{U}(\mathfrak{n'})\mathfrak{n'})^{k}=\mathcal{U}(\mathfrak{n'})\mathfrak{n'}^{k}$.
Then, if follows that $(1)\Leftrightarrow(2)$ in Lemma \ref{L:finite1}.

\begin{lemma}\label{L:finite2}
Let $V,U$ be two complex linear representations of $\mathcal{U}(\mathfrak{n'})$. Then we have the following assertions:
\begin{enumerate}
\item[(i)]$V^{[\mathfrak{n'}]}\otimes U^{[\mathfrak{n'}]}\subset(V\otimes U)^{[\mathfrak{n'}]}$.
\item[(ii)]If $V=V^{[\mathfrak{n'}]}$, then $(V\otimes U)^{[\mathfrak{n'}]}=V\otimes U^{[\mathfrak{n'}]}$.
\item[(iii)]If $U=U^{[\mathfrak{n'}]}$, then $(V\otimes U)^{[\mathfrak{n'}]}=V^{[\mathfrak{n'}]}\otimes U$.
\end{enumerate}
\end{lemma}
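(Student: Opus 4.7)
The plan is to prove (i) by a direct Leibniz/coproduct calculation, and then derive (ii) and (iii) by reducing to a finite-dimensional $\mathfrak{n'}$-stable subspace and using Engel's theorem.

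For (i), recall the action on $V \otimes U$ is determined by the primitive coproduct $\Delta(X) = X \otimes 1 + 1 \otimes X$ on $\mathcal{U}(\mathfrak{n'})$. Expanding $\Delta(X_1 \cdots X_m) = \prod_i (X_i \otimes 1 + 1 \otimes X_i)$ gives an order-preserving sum $\sum_{S \subset \{1,\dots,m\}} X_S \otimes X_{S^c}$, where $X_S$ is the product of the $X_i$'s with $i \in S$ taken in the induced order. Given $v \in V^{[\mathfrak{n'}]}$ and $u \in U^{[\mathfrak{n'}]}$ with $\mathfrak{n'}^k v = 0$ and $\mathfrak{n'}^l u = 0$, if $m \geq k+l-1$, then for every subset $S$ either $|S| \geq k$ (so $X_S v = 0$) or $|S^c| \geq l$ (so $X_{S^c} u = 0$). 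Hence $\mathfrak{n'}^{k+l-1}(v \otimes u) = 0$, proving (i).

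For (ii), the inclusion $V \otimes U^{[\mathfrak{n'}]} \subset (V \otimes U)^{[\mathfrak{n'}]}$ is immediate from (i) because $V = V^{[\mathfrak{n'}]}$. For the reverse inclusion, take $w \in (V \otimes U)^{[\mathfrak{n'}]}$ and write $w = \sum_{i=1}^n v_i \otimes u_i$ with the $v_i \in V = V^{[\mathfrak{n'}]}$. Set $V_0 = \sum_i \mathcal{U}(\mathfrak{n'}) v_i$; by the proof of Lemma~\ref{L:finite1}, each $\mathcal{U}(\mathfrak{n'}) v_i$ is finite-dimensional, so $V_0$ is a finite-dimensional $\mathfrak{n'}$-stable subspace on which every $X \in \mathfrak{n'}$ acts nilpotently. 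By Engel's theorem, choose a basis $e_1, \dots, e_m$ of $V_0$ with $\mathfrak{n'} e_j \subset \operatorname{span}(e_1,\dots,e_{j-1})$, and rewrite $w = \sum_{j=1}^m e_j \otimes u_j'$ for some $u_j' \in U$. The quotient map $V_0 \twoheadrightarrow V_0/\operatorname{span}(e_1,\dots,e_{m-1})$ is $\mathfrak{n'}$-equivariant with trivial $\mathfrak{n'}$-action on the one-dimensional target, so the induced map $\pi_m : V_0 \otimes U \to U$ is $\mathfrak{n'}$-equivariant. Applied to $w$ it yields $u_m'$, and applied to $\mathfrak{n'}^N w = 0$ it shows $\mathfrak{n'}^N u_m' = 0$, so $u_m' \in U^{[\mathfrak{n'}]}$. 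Then $e_m \otimes u_m' \in (V \otimes U)^{[\mathfrak{n'}]}$ by (i), so $w - e_m \otimes u_m' \in (V_0' \otimes U) \cap (V \otimes U)^{[\mathfrak{n'}]}$ where $V_0' = \operatorname{span}(e_1, \dots, e_{m-1})$. Induct on $\dim V_0$ to conclude every $u_j' \in U^{[\mathfrak{n'}]}$, whence $w \in V \otimes U^{[\mathfrak{n'}]}$.

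For (iii), the statement is symmetric to (ii) under swapping the two factors; repeat the argument above with the roles of $V$ and $U$ interchanged (choosing the finite-dimensional $\mathfrak{n'}$-stable subspace inside $U$ instead). The main obstacle in the whole lemma is carrying out the reduction in (ii) cleanly: one has to notice that, even though $U$ may be infinite-dimensional, writing $w$ as a finite sum and saturating under $\mathcal{U}(\mathfrak{n'})$ on the $V$-side produces a finite-dimensional subspace to which Engel's theorem applies, after which successive quotients peel off one tensor factor at a time.
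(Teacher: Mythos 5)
Your proposal is correct and follows essentially the same route as the paper: part (i) by the primitive coproduct (which the paper treats as obvious), and parts (ii)--(iii) by extracting a finite flag $e_1,\dots,e_m$ with $\mathfrak{n'}e_j\subset\operatorname{span}(e_1,\dots,e_{j-1})$, projecting onto the top one-dimensional (trivial) quotient to see the top $U$-coefficient is nilpotent, then subtracting via (i) and inducting — exactly the paper's argument, with your Engel-theorem construction of the flag matching the paper's use of the filtration from Lemma \ref{L:finite1}.
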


\begin{proof}
(i) is obvious. (ii) and (iii) are similar. We show (ii) below.

Suppose that $V=V^{[\mathfrak{n'}]}$. For each $k\geq 1$, put \[V_{k}=\{v\in V:(\ker\eta)^{k}v=0\}.\] Then,
$\mathfrak{n'}V_{k}\subset V_{k-1}$ for each $k\geq 1$. By Lemma \ref{L:finite1}, we have \[V_{k}=\{v\in V:
\mathfrak{n'}^{k}v=0\}.\] Then, \[0=V_{0}\subset V_{1}\subset\cdots\subset V_{k}\subset\cdots\] is an exhaustive
ascending filtration of $V$ by the assumption $V=V^{[\mathfrak{n'}]}$. Let $x$ be an element in
$(V\otimes U)^{[\mathfrak{n'}]}$. From the above, we can find a set of linearly independent elements
$\{v_{i}:1\leq i\leq k\}$ of $V$ such that \[(\ker\eta)v_{i}\subset\span\{v_{j}:1\leq j\leq i-1\}\] for each $i$
($1\leq i\leq k$) and $x\in\span\{v_{j}:1\leq j\leq k\}\otimes U$. Write \[x=\sum_{1\leq j\leq k}v_{j}
\otimes u_{j}\] where $u_{j}\in U$ ($1\leq j\leq k$). Projecting to \[(\span\{v_{j}:1\leq j\leq k\}/
\span\{v_{j}:1\leq j\leq k-1\})\otimes U,\] we get that $v_{k}\otimes u_{k}$ is annihilated by a power of
$\ker\eta$ in this quotient module of $\span\{v_{j}:1\leq j\leq k\}\otimes U$. Since
$\span\{v_{j}:1\leq j\leq k\}/\span\{v_{j}:1\leq j\leq k-1\}$ is isomorphic to the trivial representation of
$\mathcal{U}(\mathfrak{n'})$, we get $u_{k}\in U^{[\mathfrak{n'}]}$. Then by (i), we have
$v_{k}\otimes u_{k}\in(V\otimes U)^{[\mathfrak{n'}]}$. Thus, \[\sum_{1\leq j\leq k-1}v_{j}\otimes u_{j}\in
(V\otimes U)^{[\mathfrak{n'}]}.\] Inductively, we show that $u_{k-1},\dots,u_{1}$ are all in
$U^{[\mathfrak{n'}]}$. Hence, $x\in V\otimes U^{[\mathfrak{n'}]}$.
\end{proof}

For a $\mathcal{U}(\mathfrak{p})$-module $V$, we call $V^{[\mathfrak{n}]}$ the {\it Casselman-Jacquet module} of $V$.
Then, $V^{[\mathfrak{n}]}$ is still a $\mathcal{U}(\mathfrak{p})$-module, and $\mathfrak{n}$ acts on it locally
nilpotently in the sense that: for any $v\in V^{[\mathfrak{n}]}$, $\mathcal{U}(\mathfrak{n})\cdot v$ is a
finite-dimensional space. Moreover, if $V$ is a $\mathcal{U}(\mathfrak{g})$-module, then so is $V^{[\mathfrak{n}]}$
(\cite{Casselman2}). It is clear that the functor $V\rightarrow V^{[\mathfrak{n}]}$ is left exact. However, it is not
right exact in general. The goal of this section is to show that the fuctor $V\rightarrow V^{[\mathfrak{n}]}$ preserves
exactness of some short exact sequences arising in \S \ref{S:Bruhat}.

\smallskip

Let $U$ be a Zariski closed subgroup of $N$. In \cite[p. 169]{Casselman-Hecht-Milicic} there is a definition of
polynomials on the affine space $N/U$: a smooth function $f$ on $N/U$ is called a {\it polynomial} if its annihilator
in $\mathcal{U}(\mathfrak{n})$ contains $\mathfrak{n}^{k}$ for some $k\geq 1$. The notions of polynomial and regular
function coincide in this case, as the following Lemma \ref{L:Abe0} shows. Lemma \ref{L:Abe0} is an adjustment of
\cite[Proposition A.4]{Abe}. The same proof as that in \cite{Abe} works. Define a distribution $\delta_{N/U}$ on $N/U$
by \[\delta_{N/U}(f)=\int_{N/U}f(x)\d x\] for any $f\in C_{c}^{\infty}(N/U)$, where $\d x$ is a fixed $N$ invariant
measure on $N/U$.

\begin{lemma}(\cite[Proposition A.4]{Abe})\label{L:Abe0}
We have the following assertions.
\begin{enumerate}
\item[(i)]For any $k\geq 1$, there exists $l\geq 1$ such that if a distribution $T$ on $N/U$ satisfies
$\mathfrak{n}^{k}T=0$, then $T\in R_{l}(N/U)\delta_{N/U}$.
\item[(ii)]For any $l\geq 1$, there exists $k\geq 1$ such that $\mathfrak{n}^{k}(R_{l}(N/U)\delta_{N/U})=0$.
\end{enumerate}
\end{lemma}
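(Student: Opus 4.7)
My plan exploits the identification of $N/U$ with an affine space and splits the statement into an Engel argument for (ii) and an ellipticity-plus-Taylor-expansion argument for (i). For (ii), observe first that for all $X\in\mathfrak{n}_0$ and $f\in C^\infty(N/U)$, integration by parts together with the $N$-invariance of $\d x$ gives $\tilde L_X(f\delta_{N/U}) = (\tilde L_X f)\delta_{N/U}$, because left-invariant vector fields on $N$ are divergence-free with respect to left-invariant measure. The assertion thus reduces to $\mathfrak{n}^k R_l(N/U) = 0$ under the natural action on functions. Since $N$ is unipotent, $R(N/U)$ is a rational $N$-module and every finite-dimensional $N$-stable subspace is $\mathfrak{n}$-nilpotent by Engel's theorem; choosing a finite-dimensional $N$-stable subspace containing $R_l(N/U)$ supplies the required $k$.

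For (i), I first show that $T$ must be real-analytic. Pick a basis $X_1,\ldots,X_m$ of $\mathfrak{n}_0$. Because $N$ acts transitively on $N/U$ with stabilizer $U$, the left-invariant vector fields $\tilde X_1,\ldots,\tilde X_m$ span $T_y(N/U)$ at every $y\in N/U$, so the operator $\Delta := -\sum_{i} \tilde X_i^2\in\mathcal{U}_2(\mathfrak{n})$ has principal symbol $(y,\xi)\mapsto \sum_{i}\xi(\tilde X_i(y))^2$, positive for $\xi\neq 0$. Hence $\Delta$, and each $\Delta^k$, is elliptic with real-analytic coefficients. Since $\Delta^k\in\mathcal{U}(\mathfrak{n})\mathfrak{n}^k$, the hypothesis yields $\Delta^k T = 0$, and elliptic regularity forces $T = g\cdot \delta_{N/U}$ for some real-analytic function $g$ on $N/U$. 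The integration-by-parts identity from the previous paragraph then turns the hypothesis into $\mathfrak{n}^k g = 0$ under the action on functions.

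Finally, real-analyticity and $\tilde X^k g = 0$ give the Taylor identity
\[g(\exp(tX)\cdot y) = \sum_{j=0}^{k-1}\frac{t^j}{j!}(\tilde X^j g)(y),\qquad \forall X\in\mathfrak{n}_0,\;y\in N/U,\]
showing $g$ is polynomial of degree $<k$ along every one-parameter orbit. Pick a Malcev basis $X_1,\ldots,X_{m'}$ of $\mathfrak{n}_0$ modulo $\mathfrak{u}_0$, so that $(t_1,\ldots,t_{m'})\mapsto\exp(t_1 X_1)\cdots\exp(t_{m'}X_{m'})\cdot eU$ is a polynomial isomorphism $\mathbb{R}^{m'}\to N/U$. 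Iterating the identity shows that in these coordinates $g$ is polynomial of degree $<k$ in each $t_i$ with the others held fixed, hence polynomial of total degree $<m'k$. Translating back to the ambient affine structure on $N/U$ yields $g\in R_l(N/U)$ for an explicit $l=l(k)$.

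The main obstacle is the ellipticity step: one must establish ellipticity of a specific element of $\mathcal{U}(\mathfrak{n})$, which is precisely where the surjection $\mathfrak{n}_0\twoheadrightarrow T_y(N/U)$ enters, and then verify that elliptic regularity applies pointwise on the noncompact manifold $N/U$ (the statement is local, but one needs the conclusion at every point uniformly, which is not automatic). The subsequent Engel argument for (ii) and Taylor-polynomial argument for (i) are then straightforward calculations in exponential coordinates on a unipotent group.
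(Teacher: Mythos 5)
Your proof is correct, but be aware that the paper itself offers no argument for this lemma: it quotes \cite[Proposition A.4]{Abe} and remarks that Abe's proof carries over. Abe works entirely inside distribution theory on the homogeneous space, arguing in Malcev coordinates essentially variable by variable (a distribution killed by a $k$-th coordinate derivative is a polynomial of degree $<k$ in that variable with distribution coefficients) with explicit degree bookkeeping; your route to (i) is genuinely different: you first upgrade $T$ to a smooth density by noting that $\Delta=-\sum_i\tilde{L}_{X_i}^2$ is elliptic with analytic coefficients (the fundamental vector fields span every tangent space by transitivity of the $N$-action) and that $\Delta^k\in(\mathcal{U}(\mathfrak{n})\mathfrak{n})^{2k}=\mathcal{U}(\mathfrak{n})\mathfrak{n}^{2k}\subset\mathcal{U}(\mathfrak{n})\mathfrak{n}^{k}$, and only then run the Taylor/Malcev argument on the resulting function $g$. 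This buys a short, self-contained proof at the price of invoking hypoellipticity (smooth hypoellipticity already suffices; analyticity is not needed), and the ``uniformity on a noncompact manifold'' you flag is a non-issue since hypoellipticity is local and smoothness at every point is all you use. Your part (ii) --- the identity $\tilde{L}_{X}(f\delta_{N/U})=(\tilde{L}_{X}f)\delta_{N/U}$ from invariance of $\d x$, local finiteness of the rational $N$-action on $R(N/U)$, and Engel/Kolchin --- is the same mechanism as in Abe. Three small points you should spell out: to see that $g$ has degree $<k$ in each Malcev coordinate separately, rewrite the coordinate line as a one-parameter orbit via $n_{<i}\exp(t_iX_i)n_{>i}\cdot eU=\exp\bigl(t_i\,\Ad(n_{<i})X_i\bigr)\cdot\bigl(n_{<i}n_{>i}\cdot eU\bigr)$ and use that $\mathfrak{n}^{k}g=0$ also kills the $k$-th power of $\Ad(n_{<i})X_i$; a smooth function with $\partial_i^{k}g=0$ for all $i$ is a polynomial of total degree at most $m'(k-1)$ (easy induction on the number of variables); and the passage to $R_l(N/U)$ with $l=l(k)$ uses that the Malcev chart is a biregular isomorphism onto affine space, so the two degree filtrations are comparable with fixed constants.
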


Let $w\in W$. For $f\in R(C(w))$ and $u'\in V_{\sigma}^{\ast}$, define a distribution $(f\otimes u')\delta_{C(w)}
\in\mathcal{S}(C(w),E(\sigma))'$ by setting \[\langle(f\otimes u')\delta_{C(w)},g\rangle=\int_{N\cap N^{ww_{0}}}
f(y\dot{w}P)u'(g(y))\d y\] for all $g\in\mathcal{S}(C(w),E(\sigma))$, where $\d y$ is a volume form on
$N\cap N^{ww_{0}}$ which induces an $N$ invariant measure $\d x$ on $C(w)=N/N\cap N^{w}\cong N \cap N^{ww_{0}}$
and $g$ is regarded as a function on $N$. Choose a basis $\{v_{1},\dots,v_{m}\}$ of $V_{\sigma}$ and let
$\{\alpha_{1},\dots,\alpha_{m}\}$ be the dual basis of $V_{\sigma}^{\ast}$. For any $v\in V_{\sigma}$ and
$\alpha\in V_{\sigma}^{\ast}$, define the {\it matrix coefficient} \[\tilde{c}_{\alpha,v}(x)=
\langle\alpha, x^{-1}\cdot v\rangle=c_{\alpha,v}(x^{-1}),\ \forall x\in P.\]

\begin{lemma}\label{L:Abe1}
For each $w\in W$, we have \[\mathcal{S}(C(w),E(\sigma))'^{[\mathfrak{n}]}=(R(C(w))\otimes V_{\sigma}^{\ast})
\delta_{C(w)}.\]
\end{lemma}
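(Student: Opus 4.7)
The plan is to reduce the lemma to the scalar case $\mathcal{S}(C(w))'^{[\mathfrak{n}]}=R(C(w))\delta_{C(w)}$ provided by Lemma \ref{L:Abe0}, via induction on the length of a composition series of $V_{\sigma}$ as an algebraic $P$-module. Since $N$ is the unipotent radical of $P$, every irreducible algebraic $P$-module has trivial $N$-action; consequently each graded piece of such a filtration has trivial $\mathfrak{n}$-action, and on it the twisted case collapses onto the scalar one.

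The main computational step is an explicit formula for the action of $X\in\mathfrak{n}$ on the distribution $(f\otimes\alpha)\delta_{C(w)}$. For $y=n\dot{w}$ with $n\in N\cap N^{ww_{0}}$, write $\exp(-tX)n\dot{w}=n\dot{w}\exp(-tZ)$ with $Z=\Ad(\dot{w}^{-1}n^{-1})X\in\mathfrak{n}^{w^{-1}}$, and decompose $Z=Z_{P}+Z_{\bar{n}}$ relative to $\mathfrak{g}=\mathfrak{p}\oplus\bar{\mathfrak{n}}$. Because $\mathfrak{n}^{w^{-1}}$ is a sum of root spaces containing no component in $\mathfrak{m}\oplus\mathfrak{a}$, one has $Z_{P}\in\mathfrak{n}\cap\mathfrak{n}^{w^{-1}}\subset\mathfrak{n}$, so $\sigma(Z_{P})$ acts nilpotently on $V_{\sigma}$. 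Using $P$-equivariance of sections of $E(\sigma)$ and dualising (integration by parts against the $N$-invariant measure $\d y$, which is preserved by the relevant vector field) yields
\[ X\cdot\bigl((f\otimes\alpha)\delta_{C(w)}\bigr)=(Df)\otimes\alpha\cdot\delta_{C(w)}+\sum_{j}(p_{j}f)\otimes\sigma^{*}(Z_{j})\alpha\cdot\delta_{C(w)}, \]
where $D$ is a polynomial vector field on $C(w)$, $p_{j}\in R(C(w))$, and $Z_{j}\in\mathfrak{n}$. In particular the $\mathfrak{n}$-action preserves $(R(C(w))\otimes V_{\sigma}^{*})\delta_{C(w)}$, and the ``twist'' terms strictly lower $\alpha$ in the kernel filtration $U_{i}=\ker\sigma^{*}(\mathfrak{n})^{i}$ of $V_{\sigma}^{*}$.

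Granting this formula, the inclusion ``$\supseteq$'' follows by induction on the depth $i$ with $\alpha\in U_{i}$: at depth one the twist terms vanish and one invokes Lemma \ref{L:Abe0}(ii) for $f$; at higher depth each $\sigma^{*}(Z_{j})\alpha$ lies in $U_{i-1}$ (handled by the inductive hypothesis), while iterating the vector-field part sufficiently many times annihilates $f\delta_{C(w)}$ again by Lemma \ref{L:Abe0}(ii). For the inclusion ``$\subseteq$'' I would apply the left-exact functor $(-)^{[\mathfrak{n}]}$ to the short exact sequence
\[ 0\to\mathcal{S}(C(w),E(\sigma)/E^{(1)})'\to\mathcal{S}(C(w),E(\sigma))'\to\mathcal{S}(C(w),E^{(1)})'\to 0 \]
arising from a $P$-subbundle $E^{(1)}\subset E(\sigma)$ with irreducible $P$-quotient. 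The base case ($V_{\sigma}$ irreducible, hence $V_{\sigma}^{*}$ $\mathfrak{n}$-trivial) is delivered by Lemma \ref{L:finite2}(iii) combined with Lemma \ref{L:Abe0}(i). A diagram chase pitting the resulting left-exact sequence against the exact row $0\to R(C(w))\otimes(V_{\sigma}/V^{(1)})^{*}\to R(C(w))\otimes V_{\sigma}^{*}\to R(C(w))\otimes V^{(1)*}\to 0$, with vertical maps defined using the already established ``$\supseteq$'' inclusion, then forces equality in the middle. The main obstacle is the computation behind the displayed formula: identifying the endomorphism part as $\sigma^{*}$ applied only to elements of $\mathfrak{n}$ (so that the kernel filtration of $V_{\sigma}^{*}$ is respected), and controlling the polynomial-degree growth of $f$ under iteration of $D$ so that Lemma \ref{L:Abe0}(ii) supplies a uniform annihilation index.
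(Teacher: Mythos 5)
Your route is genuinely different from the paper's, and in outline it can be made to work; it is essentially Abe's original computational approach, which the paper deliberately repackages so as to avoid the explicit differential-operator formula. For the inclusion $\mathcal{S}(C(w),E(\sigma))'^{[\mathfrak{n}]}\subseteq(R(C(w))\otimes V_{\sigma}^{\ast})\delta_{C(w)}$ the paper has a one-stroke argument valid for arbitrary $V_{\sigma}$, with no composition series and no formula: it passes to the smaller subalgebra $\mathfrak{n}\cap\mathfrak{n}^{ww_{0}}$ (so the Jacquet space only gets larger), observes that $E(\sigma)|_{C(w)}$ is a trivial $N\cap N^{ww_{0}}$-equivariant bundle because $N\cap N^{ww_{0}}$ acts on $C(w)$ by left translations in the chosen trivialization, and then applies Lemma \ref{L:finite2}(iii) together with Lemma \ref{L:Abe0}(i) for the group $N\cap N^{ww_{0}}$. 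This replaces your composition-series induction, the exact sequence of relative Schwartz distribution spaces (whose topological exactness and compatibility of identifications you assert but do not check), and the diagram chase. For the reverse inclusion the paper, like you, reduces to the case of trivial $N$-action on the fibre via a composition series, but it does so after identifying $(R(C(w))\otimes V_{\sigma}^{\ast})\delta_{C(w)}$ with the space $R_{\dot{w}\sigma}$ of regular sections of the dual homogeneous bundle, where the filtration by $P$-submodules is transparent and Lemma \ref{L:Abe0}(ii) applies directly; your explicit formula (whose root-theoretic core, $Z_{P}\in\mathfrak{n}\cap\mathfrak{n}^{w^{-1}}\subset\mathfrak{n}$, is correct, and whose measure statement is fine because the flow is just left translation by $\exp(-tX)$ on the $N$-stable cell $C(w)$, preserving the invariant measure) is what the paper's identification lets it avoid.

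The one genuine soft spot is the completion of your ``$\supseteq$'' induction. As phrased, you alternate between ``twist terms drop the depth'' and ``iterating $D$ long enough kills $f\delta_{C(w)}$,'' and you yourself flag the need to control the polynomial degree of $f$ under iteration so that Lemma \ref{L:Abe0}(ii) gives a uniform annihilation index; with vector-field and twist applications interleaved, that bookkeeping does not close as stated, since the degree entering a given block of pure vector-field letters grows with the total word length. The worry disappears if you reorganize the induction around the filtration rather than around individual iterates: your formula shows that each $M_{i}:=(R(C(w))\otimes U_{i})\delta_{C(w)}$ is $\mathfrak{n}$-stable and that $\mathfrak{n}$ acts on $M_{i}/M_{i-1}$ purely through the vector-field part, i.e.\ through finitely many copies of the scalar module $R(C(w))\delta_{C(w)}$, where Lemma \ref{L:Abe0}(ii) gives local nilpotency. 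Local nilpotency then climbs the finite filtration: if the image of $v$ in $M_{i}/M_{i-1}$ is killed by $\mathfrak{n}^{k_{1}}$, then $\mathfrak{n}^{k_{1}}v$ is a finite-dimensional subspace of $M_{i-1}$, hence killed by a single further power $\mathfrak{n}^{k_{2}}$, so $\mathfrak{n}^{k_{1}+k_{2}}v=0$. No uniform degree estimate is needed, and this is in effect how the paper's own filtration of $R_{\dot{w}\sigma}$ concludes.
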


\begin{proof}
Due to the fact that $N$ is a unipotent group and $V_{\sigma}$ is finite-dimensional algebraic representation of
$P$, we have $V_{\sigma}^{[\mathfrak{n}\cap\mathfrak{n}^{ww_{0}}]}=V_{\sigma}$. Note that $C(w)\cong N\cap N^{ww_{0}}$
and $E(\sigma)|_{C(w)}$ is a trivial $N\cap N^{ww_{0}}$-equivariant bundle. Then, \[\mathcal{S}(C(w),
E(\sigma))'^{[\mathfrak{n}]}\subset\mathcal{S}(C(w),E(\sigma))'^{[\mathfrak{n}\cap\mathfrak{n}^{ww_{0}}]}=
\mathcal{S}(C(w)))'^{[\mathfrak{n}\cap\mathfrak{n}^{ww_{0}}]}\otimes V_{\sigma}^{\ast}\] by Lemma
\ref{L:finite2} (iii) and \[\mathcal{S}(C(w))^{'[\mathfrak{n}\cap\mathfrak{n}^{ww_{0}}]}=R(C(w))\delta_{C(w)}\]
by Lemma \ref{L:Abe0}. Thus, \begin{equation}\label{Eq:Abe1-a}\mathcal{S}(C(w),E(\sigma))'^{[\mathfrak{n}]}\subset
(R(C(w))\delta_{C(w)})\otimes V_{\sigma}^{\ast}=(R(C(w))\otimes V_{\sigma}^{\ast})\delta_{C(w)}.\end{equation}

Define a representation $(\dot{w}\sigma,V_{\dot{w}\sigma})$ of $N\cap N^{w}$ whose underlying space is $V_{\sigma}$
and the group action is given by \[(\dot{w}\sigma)(x)=\sigma(\dot{w}^{-1}x\dot{w}),\forall x\in N
\cap N^{w}.\] Then, we have \[E(\sigma)|_{C(w)}\cong N\times_{N\cap N^{w}}V_{\dot{w}\sigma}\] via the
identification $C(w)\cong N/N\cap N^{w}$. Recall that a section of the dual bundle $N\times_{N\cap N^{w}}
V_{\dot{w}\sigma}^{\ast}$ may be identified with a function $h: N\rightarrow V_{\dot{w}\sigma}^{\ast}$ such
that \[h(nn')=(\dot{w}\sigma^{\ast}(n'^{-1}))h(n),\ \forall(n,n')\in N\times (N\cap N^{w}).\] Write
$R_{\dot{w}\sigma}$ for the space of such sections $h: N\rightarrow V_{\dot{w}\sigma}^{\ast}$ such that
$\langle h(x),v\rangle$ is a regular function on $N$ for any fixed vector $v\in V_{\sigma}=V_{\dot{w}\sigma}$. We
may identify a section $g\in\mathcal{S}(C(w),E(\sigma)|_{C(w)})$ with a function $g:N\rightarrow V_{\dot{w}\sigma}$
such that \[g(nn')=(\dot{w}\sigma(n'^{-1}))g(n),\ \forall(n,n')\in N\times (N\cap N^{w})\] and
$\langle\alpha,g(x)\rangle$ is a Schwartz function on $N\cap N^{ww_{0}}$ for any fixed vector
$\alpha\in V_{\sigma}^{\ast}=V_{\dot{w}\sigma}^{\ast}$. For any section $h\in R_{\dot{w}\sigma}$ and any section $g\in\mathcal{S}(C(w),E(\sigma)|_{C(w)})$, define \[(h\otimes g)(x)=\langle h(x),g(x)\rangle,\forall x\in N.\]
where $\langle\cdot,\cdot\rangle$ is the pairing $V_{\sigma}^{\ast}\times V_{\sigma}\rightarrow\mathbb{C}$.
Then, \[(h\otimes g)(nn')=(h\otimes g)(n),\forall (n,n')\in N\times(N\cap N^{w}).\] Thus, $h\otimes g$
represents a function on $C(w)\cong N/(N\cap N^{w})$ and it is a Schwartz function. Hence, we can define
\[\langle h,g\rangle=\int_{C(w)}(h\otimes g)(x)\d x=\int_{N\cap N^{ww_{0}}}(h\otimes g)(y)\d y.\]

For any $f\in R(C(w))$ and $u'\in V_{\sigma}^{\ast}$, define $f\otimes u': N\rightarrow V_{\dot{w}\sigma}$ by
setting \[(f\otimes u')(nn')=f(n\dot{w}P)(\dot{w}\sigma^{\ast}(n'^{-1})u'),\ \forall (n,n')\in(N\cap N^{ww_{0}})
\times(N\cap N^{w}).\] Then, $f\otimes u'\in R_{\dot{w}\sigma}$. Due to the fact that $C(w)\cong N/(N\cap N^{w})
\cong N\cap N^{ww_{0}}$, the map \[(f\otimes u')\delta_{C(w)}\mapsto f\otimes u',\ f\in R(C(w)),
u'\in V_{\sigma}^{\ast}\] gives a bijection \[(R(C(w))\otimes V_{\sigma}^{\ast})\delta_{C(w)}
\cong R_{\dot{w}\sigma}.\] Moreover, we have \[\langle(f\otimes u')\delta_{C(w)},g\rangle=\langle f\otimes u',
g\rangle\] for any $f\in R(C(w))$, any $u'\in V_{\sigma}^{\ast}$ and any $g\in\mathcal{S}(C(w),E(\sigma)|_{C(w)})$.
Therefore, the $\mathcal{U}(\mathfrak{n})$ representations $(R(C(w))\otimes V_{\sigma}^{\ast})\delta_{C(w)}$
and $R_{\dot{w}\sigma}$ are isomorphic.

When $N$ acts trivially on $V_{\sigma}$, $R_{\dot{w}\sigma}$ is isomorphic to the direct sum of $\dim V_{\sigma}$
copies of $R(C(w))$. By Lemma \ref{L:Abe0} (ii), each section $h\in R_{\dot{w}\sigma}$ is annihilated by
$\mathfrak{n}^{k}$ for some $k\geq 1$. Then, \[(R(C(w))\otimes V_{\sigma}^{\ast})\delta_{C(w)}\subset
\mathcal{S}(C(w),E(\sigma))'^{[\mathfrak{n}]}.\] In general, take a composition series \[0=V_{0}\subset V_{1}
\subset\cdots\subset V_{s}=V_{\sigma}\] of $V_{\sigma}$ and write $\sigma_{i}$ for the representation of $P$
on $V_{i}/V_{i-1}$ ($1\leq i\leq s$). Accordingly, we have a filtration \[0=R_{\dot{w}\sigma_{0}}
\subset R_{\dot{w}\sigma_{1}}\subset\cdots\subset R_{\dot{w}\sigma_{s}}=R_{\dot{w}\sigma}.\] For each $i$
($1\leq i\leq s$), since $V_{i}/V_{i-1}$ is an irreducible algebraic representation of $P$, then $N$ acts
trivially on it. Hence, any element of $ R_{\dot{w}\sigma_{i}}/ R_{\dot{w}\sigma_{i-1}}
\cong R_{\dot{w}(\sigma_{i}/\sigma_{i-1})}$ is annihilated by $\mathfrak{n}^{k}$  for some $k\geq 1$.
Then, any element of $R_{\dot{w}\sigma}$ is annihilated by $\mathfrak{n}^{k}$  for some $k\geq 1$.
Thus, \begin{equation}\label{Eq:Abe1-b}(R(C(w))\otimes V_{\sigma}^{\ast})\delta_{C(w)}\subset
\mathcal{S}(C(w),E(\sigma))'^{[\mathfrak{n}]}.\end{equation} Combining \eqref{Eq:Abe1-a} and
\eqref{Eq:Abe1-b}, we get \[\mathcal{S}(C(w),E(\sigma))'^{[\mathfrak{n}]}=(R(C(w))\otimes V_{\sigma}^{\ast})
\delta_{C(w)}.\]
\end{proof}

Set \begin{equation}\label{Jw}J_{w,\sigma}\!=\!\span\{\tilde{L}_{Y}\!((f\!\otimes\!u')
\delta_{C(w)}):\!Y\!\in\!\mathcal{U}(\mathfrak{n}^{ww_{0}}\cap\bar{\mathfrak{n}}),f\!\in\!R(C(w)),
u'\!\in\!V_{\sigma}^{\ast}\}.\end{equation}

The following Lemma \ref{L:Abe4} follows from results in \cite[\S 2 and \S 3]{Abe}. Since Lemma \ref{L:Abe-Gp}
below does not directly follow from results in \cite{Abe}, we give a complete proof for Lemma \ref{L:Abe4}
which can be adjusted to show Lemma \ref{L:Abe-Gp}.

\begin{lemma}\label{L:Abe4}
For each $w\in W$, we have \[\mathcal{T}(C(w),E(\sigma))^{[\mathfrak{n}]}=J_{w,\sigma}.\]
\end{lemma}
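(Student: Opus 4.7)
The plan is to prove the equality $\mathcal{T}(C(w),E(\sigma))^{[\mathfrak{n}]}=J_{w,\sigma}$ inductively using the transversal degree filtration, targeting the refined statement that $F_{p}\mathcal{T}(C(w),E(\sigma))^{[\mathfrak{n}]}=\tilde{L}_{\mathcal{U}_{p}(\mathfrak{n}^{ww_{0}}\cap\bar{\mathfrak{n}})}F_{0}\mathcal{T}(C(w),E(\sigma))^{[\mathfrak{n}]}$ for every $p\geq 0$. The base case $p=0$ is handled by Lemma \ref{L:Abe1}, which gives $F_{0}\mathcal{T}(C(w),E(\sigma))^{[\mathfrak{n}]}=(R(C(w))\otimes V_{\sigma}^{\ast})\delta_{C(w)}$, matching the $p=0$ part of $J_{w,\sigma}$ on the nose.

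First I would handle the inclusion $J_{w,\sigma}\subseteq\mathcal{T}(C(w),E(\sigma))^{[\mathfrak{n}]}$. The goal is to show that each generator $\tilde{L}_{Y}((f\otimes u')\delta_{C(w)})$ is killed by some power of $\mathfrak{n}$, by induction on the order of $Y\in\mathcal{U}(\mathfrak{n}^{ww_{0}}\cap\bar{\mathfrak{n}})$. For the inductive step I would write $Y=ZY'$ with $Z\in\mathfrak{n}^{ww_{0}}\cap\bar{\mathfrak{n}}$ and $Y'$ of lower order, and commute $\tilde{L}_{X}\tilde{L}_{Z}=\tilde{L}_{Z}\tilde{L}_{X}+\tilde{L}_{[X,Z]}$ for $X\in\mathfrak{n}$. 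The bracket $[X,Z]\in\mathfrak{g}$ would be decomposed via the direct sum $\mathfrak{g}=(\mathfrak{n}^{ww_{0}}\cap\bar{\mathfrak{n}})\oplus\mathfrak{p}\oplus(\mathfrak{n}^{w}\cap\bar{\mathfrak{n}})$; the $\mathfrak{p}$-component preserves $\mathfrak{n}$-nilpotency because $\mathfrak{n}$ is an ideal of $\mathfrak{p}$ and hence $\mathcal{U}(\mathfrak{p})\mathfrak{n}^{k}\subseteq\mathfrak{n}^{k}\mathcal{U}(\mathfrak{p})$ for every $k$, while the other two components feed into lower-order generators after PBW reordering together with the induction hypothesis.

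For the reverse inclusion $\mathcal{T}(C(w),E(\sigma))^{[\mathfrak{n}]}\subseteq J_{w,\sigma}$, I would induct on the smallest $p$ with $T\in F_{p}\mathcal{T}$. The heart of the inductive step is to pin down $(\Gr^{p}\mathcal{T}(C(w),E(\sigma)))^{[\mathfrak{n}]}$. Lemmas \ref{L:4.3} and \ref{L:4.4} furnish a finite $\mathcal{U}(\mathfrak{p})$-stable filtration of $\Gr^{p}\mathcal{T}(C(w),E(\sigma))$ whose graded pieces are isomorphic to $\mathcal{S}(C(w),E(\sigma))'$ as $\mathcal{U}(\mathfrak{n})$-modules, and Lemma \ref{L:Abe1} identifies the Casselman-Jacquet part of each graded piece as $(R(C(w))\otimes V_{\sigma}^{\ast})\delta_{C(w)}$. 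Combining these with Lemma \ref{L:finite2} --- legitimate because Lemma \ref{L:4.4} together with Lemma \ref{L:Abe0}(ii) shows that every element of $L_{p}(w)$ is annihilated by some $\mathfrak{n}^{k}$ --- I would identify $(\Gr^{p}\mathcal{T}(C(w),E(\sigma)))^{[\mathfrak{n}]}$ with the image of $J_{w,\sigma}\cap F_{p}\mathcal{T}$ in $\Gr^{p}\mathcal{T}$. A diagram chase using the inductive hypothesis $F_{p-1}\mathcal{T}^{[\mathfrak{n}]}=J_{w,\sigma}\cap F_{p-1}\mathcal{T}$ then completes the step.

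The main obstacle is precisely this reverse direction: since $V\mapsto V^{[\mathfrak{n}]}$ is only left exact, the induced map $F_{p}\mathcal{T}^{[\mathfrak{n}]}/F_{p-1}\mathcal{T}^{[\mathfrak{n}]}\hookrightarrow(\Gr^{p}\mathcal{T})^{[\mathfrak{n}]}$ is a priori only an injection, so lifting an $\mathfrak{n}$-nilpotent class in $\Gr^{p}\mathcal{T}$ to an actual element of $J_{w,\sigma}\cap F_{p}\mathcal{T}$ requires the full structural input from Lemmas \ref{L:Abe1}, \ref{L:4.3}, \ref{L:4.4} and \ref{L:finite2} together with the pointwise $\mathfrak{n}$-nilpotency of $L_{p}(w)$. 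A subtlety worth flagging is that the tensor product in $\Gr^{p}\mathcal{T}\cong L_{p}(w)\otimes_{R(C(w))}F_{0}\mathcal{T}$ is over $R(C(w))$, not $\mathbb{C}$, so the application of Lemma \ref{L:finite2} has to be filtered through the $N$-equivariant filtration of $L_{p}(w)$ provided by Lemma \ref{L:4.4}.
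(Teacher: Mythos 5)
Your overall architecture (prove the easy inclusion for the generators, then descend through the transversal degree filtration using Lemma \ref{L:Abe1} as the base case) is reasonable, but both halves as written have genuine gaps. For the inclusion $J_{w,\sigma}\subseteq\mathcal{T}(C(w),E(\sigma))^{[\mathfrak{n}]}$, your induction on the order of $Y$ does not close: writing $Y=ZY'$ and commuting $X\in\mathfrak{n}$ past $Z$, the bracket component of $[X,Z]$ lying in $\mathfrak{n}^{ww_{0}}\cap\bar{\mathfrak{n}}$ produces $\tilde{L}_{[X,Z]Y'}$, a generator of the \emph{same} order as $Y$, not lower, and the component in $\mathfrak{n}^{w}\cap\bar{\mathfrak{n}}$ (which really occurs, e.g.\ already in $\SL(3)$) produces $\tilde{L}_{Y''}\tilde{L}_{Y'}T$ with $Y''\notin\mathcal{U}(\mathfrak{n}^{ww_{0}}\cap\bar{\mathfrak{n}})$, which is neither a generator of $J_{w,\sigma}$ nor covered by your inductive hypothesis. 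The paper sidesteps both problems: for this direction it observes that $J'_{w,\sigma}$ (the $\mathcal{U}(\mathfrak{g})$-span) is a $\mathcal{U}(\mathfrak{n})$-quotient of $\mathcal{U}(\mathfrak{g})\otimes_{\mathbb{C}}F_{0}\mathcal{T}(C(w),E(\sigma))^{[\mathfrak{n}]}$ with $\mathfrak{n}$ acting on the first factor by the (locally nilpotent) adjoint action, so Lemma \ref{L:finite2}(i) gives nilpotency at one stroke; and in the hard direction it deliberately works with $Y\in\mathfrak{n}^{ww_{0}}\cap\mathfrak{n}$ rather than all of $\mathfrak{n}$, precisely so that brackets with $\mathfrak{n}^{ww_{0}}\cap\bar{\mathfrak{n}}$ stay inside $\mathfrak{n}^{ww_{0}}$ and the third component never appears.

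For the reverse inclusion, your "diagram chase" presupposes exactly the statement that carries the difficulty: that every $\mathfrak{n}$-nilpotent class in $\Gr^{p}\mathcal{T}(C(w),E(\sigma))$ is the image of an element of $J_{p,w,\sigma}$. Lemmas \ref{L:4.3}, \ref{L:4.4}, \ref{L:finite2} and \ref{L:Abe1} identify the Casselman--Jacquet module of each graded piece of the filtration of $\Gr^{p}\mathcal{T}$, but by themselves they do not produce lifts lying in $J_{p,w,\sigma}$: for that you must know that the image of $J_{p,w,\sigma}$ under the symbol isomorphism $\alpha_{p}$ is $L_{p}(w)\otimes_{R(C(w))}(R(C(w))\otimes V_{\sigma}^{\ast})\delta_{C(w)}$, compatibly with the $N$-equivariant filtration of $L_{p}(w)$, and this requires computing the top-order transversal symbols of $\tilde{L}_{Y}((f\otimes u')\delta_{C(w)})$ --- a nontrivial verification absent from your sketch. (Note also that the $\Gr^{p}$-level statement you are targeting is the paper's Lemma \ref{L:Abe-Gp}, which the authors prove \emph{after} and by the same method as Lemma \ref{L:Abe4}, not as an input to it.) The paper's actual mechanism is different: it uses the identification $\mathcal{T}(C(w),E(\sigma))\cong\mathcal{U}(\mathfrak{n}^{ww_{0}}\cap\bar{\mathfrak{n}})\otimes_{\mathbb{C}}F_{0}\mathcal{T}(C(w),E(\sigma))$ from Lemma \ref{L:Abe-A.3}, filters by the weight of a dominant element $H\in\mathfrak{a}_{0}$, and proves the commutation relation \eqref{Eq:YT} by induction on that $H$-degree via \eqref{Eq:Abe4-3}; this is the real content that your proposal replaces with an unproven surjectivity assertion. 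So the plan could perhaps be completed, but only after supplying the symbol computation, and as it stands the proof is incomplete at its crucial step.
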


\begin{proof}
Put \begin{equation}\label{Jw2}J'_{w,\sigma}=\span\{\tilde{L}_{Y}((f\otimes u')\delta_{C(w)}):Y\in\mathcal{U}
(\mathfrak{g}),f\in R(C(w)),u'\in V_{\sigma}^{\ast}\}.\end{equation} Then, it is clear that \begin{equation}
\label{Eq:Abe4-1}J_{w,\sigma}\subset J'_{w,\sigma}.\end{equation} By Lemma \ref{L:Abe1}, $J'_{w,\sigma}$ is a
quotient of $\mathcal{U}(\mathfrak{g})\otimes_{\mathbb{C}}F_{0}\mathcal{T}(C(w),E(\sigma))'^{[\mathfrak{n}]}$
as a representation of $\mathcal{U}(\mathfrak{n})$. Since each element of $\mathcal{U}(\mathfrak{g})$ (or $F_{0}\mathcal{T}(C(w),E(\sigma))'^{[\mathfrak{n}]}$) is annihilated by a power of $\mathfrak{n}$, by Lemma
\ref{L:finite2} (i) we have \begin{equation}\label{Eq:Abe4-2}J'_{w,\sigma}\subset\mathcal{T}(C(w),
E(\sigma))^{[\mathfrak{n}]}.\end{equation}

By Lemma \ref{L:Abe-A.3} (ii), we have \[\mathcal{T}(C(w),E(\sigma))\cong\mathcal{U}(\mathfrak{n}^{ww_{0}}\cap
\bar{\mathfrak{n}})\otimes_{\mathbb{C}}F_{0}\mathcal{T}(C(w),E(\sigma)).\] Take an element $H\in\mathfrak{a}_{0}$
such that $\alpha(H)\in\mathbb{Z}_{>0}$ for each root $\alpha\in\Phi^{+}(\mathfrak{g},\mathfrak{a})$. The
conjugation action of $H$ on $\mathcal{U}(\mathfrak{g})$ gives a grading \[\mathcal{U}(\mathfrak{g})=
\sum_{k\in\mathbb{Z}}\mathcal{U}(\mathfrak{g})[k]\] defined by \[\mathcal{U}(\mathfrak{g})[k]=
\{Y\in\mathcal{U}(\mathfrak{g}):HY-YH=kY\}.\] Put \[\mathcal{U}(\mathfrak{n}^{ww_{0}}\cap\bar{\mathfrak{n}})[k]
=\mathcal{U}(\mathfrak{n}^{ww_{0}}\cap\bar{\mathfrak{n}})\cap\mathcal{U}(\mathfrak{g})[k].\] Since $H$ normalizes
$\mathfrak{n}^{ww_{0}}\cap\bar{\mathfrak{n}}$ and $\bar{\mathfrak{n}}$ is the sum of $H$-eigenspaces of negative
eigenvalues, we have a grading \[\mathcal{U}(\mathfrak{n}^{ww_{0}}\cap\bar{\mathfrak{n}})=\sum_{k\in\mathbb{Z}_{\geq 0}}\mathcal{U}(\mathfrak{n}^{ww_{0}}\cap\bar{\mathfrak{n}})[-k].\] For each $k\in\mathbb{Z}_{\geq 0}$, put \begin{equation}\label{Eq:Jw3}J_{w,\sigma,k}\!=\!\span\{\tilde{L}_{Y}T:\!Y\!\in\!\mathcal{U}(\mathfrak{n}^{ww_{0}}\!
\cap\bar{\mathfrak{n}})[-k'],k'\leq k,T\!\in\!F_{0}\mathcal{T}(C(w),E(\sigma))\}.\end{equation}
Then, \[\mathbb{C}=J_{w,\sigma,0}\subset\cdots J_{w,\sigma,k}\subset\cdots\] form an exhaustive ascending
filtration of $\mathcal{T}(C(w),E(\sigma))$.

We show that: for each $k\geq 0$, \begin{equation}\label{Eq:YT}Y(\tilde{L}_{Y'}Y)\in\tilde{L}_{Y'}(YT)+J_{w,\sigma,
k-1}\end{equation} for any $Y\in\mathfrak{n}^{ww_{0}}\cap\mathfrak{n}$, $Y'\in\mathcal{U}(\mathfrak{n}^{ww_{0}}
\cap\bar{\mathfrak{n}})[k]$ and $T\in F_{0}\mathcal{T}(C(w),E(\sigma)$. Prove by induction on $k$. When $k=0$,
this is clear. Let $k_{0}\geq 1$ and suppose \eqref{Eq:YT} holds when $k<k_{0}$. When $k=k_{0}$, we have
\begin{equation}\label{Eq:Abe4-3}Y(\tilde{L}_{Y_{1}\cdots Y_{l}}T)=\sum_{1\leq i\leq l}(Y_{1}\cdots Y_{i-1}
[Y,Y_{i}])\tilde{L}_{Y_{i+1}\cdots Y_{l}}T+(Y_{1}\cdots Y_{l}Y)T.\end{equation} for any $Y_{1},\dots,Y_{l}\in
\mathfrak{n}^{ww_{0}}\cap\bar{\mathfrak{n}}$, $Y\in\mathfrak{n}^{ww_{0}}\cap\mathfrak{n}$ and
$T\in F_{0}\mathcal{T}(C(w),E(\sigma))$. Without loss of generality we assume that $Y,Y_{1},\dots,Y_{l}$ are all
$\mathfrak{a}$-weight vectors. For each $i$ ($1\leq i\leq k$), $[Y,Y_{i}]$ is an $\mathfrak{a}$-weight vector
contained in $\mathfrak{n}^{ww_{0}}$. Thus, $[Y,Y_{i}]\in\mathfrak{n}^{ww_{0}}\cap\bar{\mathfrak{n}}$ or
$\mathfrak{n}^{ww_{0}}\cap\mathfrak{n}$. When $[Y,Y_{i}]\in\mathfrak{n}^{ww_{0}}\cap\bar{\mathfrak{n}}$, the
$H$-weight of $Y_{1}\cdots Y_{i-1}[Y,Y_{i}]Y_{i+1}\cdots Y_{l}$ is bigger than $-k_{0}$. Then,
\[(Y_{1}\cdots Y_{i-1}[Y,Y_{i}])\tilde{L}_{Y_{i+1}\cdots Y_{l}}T=\tilde{L}_{Y_{1}\cdots Y_{i-1}[Y,Y_{i}]Y_{i+1}
\cdots Y_{l}}T\in J_{w,\sigma,k_{0}-1}.\] When $[Y,Y_{i}]\in\mathfrak{n}^{ww_{0}}\cap\mathfrak{n}$, let $-k_{1}$
be the $H$-weight of $Y_{1}\cdots Y_{i-1}$ and let $-k_{2}$ be the $H$-weight of $Y_{i+1}\cdots Y_{l}$. Then,
$k_{1}+k_{2}<k_{0}$. By induction we have \[[Y,Y_{i}]\tilde{L}_{Y_{i+1}\cdots Y_{l}}T\in J_{w,\sigma,k_{2}}.\]
Then, \[(Y_{1}\cdots Y_{i-1}[Y,Y_{i}])\tilde{L}_{Y_{i+1}\cdots Y_{l}}T\in J_{w,\sigma,k_{0}-1}.\] For the last
term, we have \[(Y_{1}\cdots Y_{l}Y)T=\tilde{L}_{Y_{1}\cdots Y_{l}}(YT).\] This finishes the proof of \eqref{Eq:YT}.

From \eqref{Eq:YT}, we get \[J_{w,\sigma,k}^{[\mathfrak{n}^{ww_{0}}\cap\mathfrak{n}]}\subset(\mathcal{U}
(\mathfrak{n}^{ww_{0}}\cap\bar{\mathfrak{n}})[k])F_{0}\mathcal{T}(C(w),E(\sigma)^{[\mathfrak{n}^{ww_{0}}\cap
\mathfrak{n}]}+J_{w,\sigma,k-1}.\] By the proof of Lemma \ref{L:Abe1}, we have
\[F_{0}\mathcal{T}(C(w),E(\sigma)^{[\mathfrak{n}^{ww_{0}}\cap\mathfrak{n}]}=F_{0}\mathcal{T}(C(w),
E(\sigma)^{[\mathfrak{n}]}=(R(C(w))\otimes V_{\sigma}^{\ast})\delta_{C(w)}.\] Then,
\[J_{w,\sigma,k}^{[\mathfrak{n}^{ww_{0}}\cap\mathfrak{n}]}\subset J_{w,\sigma}+J_{w,\sigma,k-1}.\] By
\eqref{Eq:Abe4-1} and \eqref{Eq:Abe4-2}, each element of $J_{w,\sigma}$ is annihilated by a power of
$\mathfrak{n}$. Then, \[J_{w,\sigma,k}^{[\mathfrak{n}^{ww_{0}}\cap\mathfrak{n}]}\subset J_{w,\sigma}+
J_{w,\sigma,k-1}^{[\mathfrak{n}^{ww_{0}}\cap\mathfrak{n}]}.\] Proving by induction on $k$, we get
\[\mathcal{T}(C(w),E(\sigma))^{[\mathfrak{n}^{ww_{0}}\cap\mathfrak{n}]}\subset J_{w,\sigma}.\]
Thus, \begin{equation}\label{Eq:Abe4-4}\mathcal{T}(C(w),E(\sigma))^{[\mathfrak{n}]}\subset J_{w,\sigma}.\end{equation}

Combining \eqref{Eq:Abe4-1}, \eqref{Eq:Abe4-2} and \eqref{Eq:Abe4-4}, we get \[\mathcal{T}(C(w),E(\sigma))^{[\mathfrak{n}]}
=J_{w,\sigma}=J'_{w,\sigma}.\]
\end{proof}

For each $p$, set \begin{equation}\label{Jpw}J_{p,w,\sigma}\!=\!\span\{\!\tilde{L}_{Y}\!((f\!\otimes\!u')
\delta_{C(w)}):\!Y\!\in\!\mathcal{U}_{p}(\mathfrak{n}^{ww_{0}}\cap\bar{\mathfrak{n}}),\!f\!\in\!R(C(w)),
\!u'\!\in\!V_{\sigma}^{\ast}\}.\end{equation}

\begin{lemma}\label{L:Abe-Fp}
For each $w\in W$ and each $p\in\mathbb{Z}$, we have \[F_{p}\mathcal{T}(C(w),E(\sigma))^{[\mathfrak{n}]}=J_{p,w,\sigma}.\]
\end{lemma}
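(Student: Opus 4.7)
The plan is to deduce this filtered refinement of Lemma \ref{L:Abe4} essentially for free, by combining the unfiltered identification $\mathcal{T}(C(w),E(\sigma))^{[\mathfrak{n}]}=J_{w,\sigma}$ from Lemma \ref{L:Abe4} with the filtration-preserving structure isomorphisms in Lemma \ref{L:Abe-A.3}(ii)--(iii). In particular I do not expect to need any new analytic input beyond what is already used to prove Lemma \ref{L:Abe4}.

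The inclusion $J_{p,w,\sigma}\subset F_{p}\mathcal{T}(C(w),E(\sigma))^{[\mathfrak{n}]}$ is immediate. By Lemma \ref{L:Abe1} each generator $(f\otimes u')\delta_{C(w)}$ lies in $F_{0}\mathcal{T}(C(w),E(\sigma))$, so $\tilde{L}_{Y}((f\otimes u')\delta_{C(w)})\in F_{p}\mathcal{T}(C(w),E(\sigma))$ whenever $Y\in\mathcal{U}_{p}(\mathfrak{n}^{ww_{0}}\cap\bar{\mathfrak{n}})$ by Lemma \ref{L:Abe-A.3}(ii). At the same time $J_{p,w,\sigma}\subset J_{w,\sigma}=\mathcal{T}(C(w),E(\sigma))^{[\mathfrak{n}]}$ by Lemma \ref{L:Abe4}. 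Intersecting the two gives the inclusion.

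For the reverse inclusion, I will start from $S\in F_{p}\mathcal{T}(C(w),E(\sigma))^{[\mathfrak{n}]}$ and use Lemma \ref{L:Abe4} together with Lemma \ref{L:Abe1} to write $S=\sum_{j}\tilde{L}_{Y_{j}}(T_{j})$ with $Y_{j}\in\mathcal{U}(\mathfrak{n}^{ww_{0}}\cap\bar{\mathfrak{n}})$ and $T_{j}\in(R(C(w))\otimes V_{\sigma}^{\ast})\delta_{C(w)}=F_{0}\mathcal{T}(C(w),E(\sigma))^{[\mathfrak{n}]}$. I then fix a vector-space basis $\{Z_{\alpha}\}_{\alpha\in A}$ of $\mathcal{U}(\mathfrak{n}^{ww_{0}}\cap\bar{\mathfrak{n}})$ compatible with the degree filtration, so that $\{Z_{\alpha}\}_{\alpha\in A_{p}}$ is a basis of $\mathcal{U}_{p}(\mathfrak{n}^{ww_{0}}\cap\bar{\mathfrak{n}})$ for every $p$, and expand each $Y_{j}$ in this basis to obtain $S=\sum_{\alpha}\tilde{L}_{Z_{\alpha}}(T'_{\alpha})$ with $T'_{\alpha}\in F_{0}\mathcal{T}(C(w),E(\sigma))^{[\mathfrak{n}]}$. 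Uniqueness of these coefficients follows from Lemma \ref{L:Abe-A.3}(iii), while the hypothesis $S\in F_{p}\mathcal{T}(C(w),E(\sigma))$ combined with the filtered isomorphism in Lemma \ref{L:Abe-A.3}(ii) forces $T'_{\alpha}=0$ whenever $Z_{\alpha}\notin\mathcal{U}_{p}(\mathfrak{n}^{ww_{0}}\cap\bar{\mathfrak{n}})$. This exhibits $S$ as an element of $J_{p,w,\sigma}$.

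The only delicate point is the last one: one must check that the two representations of $S$ (the one produced by Lemma \ref{L:Abe4} and the one in the chosen compatible basis) really live in a single free module in which the coefficients are uniquely determined, so that the degree bound forced by $S\in F_{p}\mathcal{T}(C(w),E(\sigma))$ transfers to those coefficients. This is where Lemma \ref{L:Abe-A.3}(ii)--(iii) does all the work, and once the freeness is invoked no further estimation is required; for this reason I expect the complete proof to occupy only a few lines.
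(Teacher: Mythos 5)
Your proposal is correct and is essentially the paper's own argument: the paper writes $F_{p}\mathcal{T}(C(w),E(\sigma))^{[\mathfrak{n}]}=F_{p}\mathcal{T}(C(w),E(\sigma))\cap J_{w,\sigma}$ using Lemma \ref{L:Abe4} and then identifies this intersection with $J_{p,w,\sigma}$ via the freeness statement of Lemma \ref{L:Abe-A.3}(ii) together with \eqref{Jw} and \eqref{Jpw}. Your basis expansion with unique coefficients in $F_{0}\mathcal{T}(C(w),E(\sigma))$ is just that freeness argument spelled out explicitly, so no new input is needed.
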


\begin{proof}
By Lemma \ref{L:Abe4}, we have \begin{eqnarray*}&&F_{p}\mathcal{T}(C(w),E(\sigma))^{[\mathfrak{n}]}\\&=&
F_{p}\mathcal{T}(C(w),E(\sigma))\cap\mathcal{T}(C(w),E(\sigma))^{[\mathfrak{n}]}\\&=&F_{p}\mathcal{T}(C(w),
E(\sigma))\cap J_{w,\sigma}.\end{eqnarray*} By Lemma \ref{L:Abe-A.3} (ii), \eqref{Jw} and \eqref{Jpw}, the latter
is equal to $J_{p,w,\sigma}$.
\end{proof}

\begin{lemma}\label{L:Abe-Gp}
For any $w\in W$ and each $p\in\mathbb{Z}$, we have \[\Gr^{p}\mathcal{T}(C(w),E(\sigma))^{[\mathfrak{n}]}=
J_{p,w,\sigma}/J_{p-1,w,\sigma}.\]
\end{lemma}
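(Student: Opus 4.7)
The injection $J_{p,w,\sigma}/J_{p-1,w,\sigma}\hookrightarrow\Gr^{p}\mathcal{T}(C(w),E(\sigma))^{[\mathfrak{n}]}$ is immediate: applying the left-exact functor $(-)^{[\mathfrak{n}]}$ to the short exact sequence $0\to F_{p-1}\mathcal{T}\to F_{p}\mathcal{T}\to\Gr^{p}\mathcal{T}\to 0$ and identifying the Casselman--Jacquet modules of $F_{p}\mathcal{T}$ and $F_{p-1}\mathcal{T}$ via Lemma \ref{L:Abe-Fp} produces the desired injection.

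For the reverse inclusion, the plan is to refine the inductive argument from the proof of Lemma \ref{L:Abe4} so as to track the transversal degree filtration simultaneously with the $H$-weight filtration. Introduce
\[
J_{p,w,\sigma,k}:=\span\{\tilde{L}_{Y'}(T):Y'\in\mathcal{U}_{p}(\mathfrak{n}^{ww_{0}}\cap\bar{\mathfrak{n}})[-k'],\,k'\leq k,\,T\in F_{0}\mathcal{T}\},
\]
which by Lemma \ref{L:Abe-A.3}(ii) forms an exhaustive ascending filtration of $F_{p}\mathcal{T}$. The crucial step is to establish a PBW formula refining \eqref{Eq:YT}: for $Y\in\mathfrak{n}^{ww_{0}}\cap\mathfrak{n}$, $Y'\in\mathcal{U}_{p}(\mathfrak{n}^{ww_{0}}\cap\bar{\mathfrak{n}})[-k]$, and $T\in F_{0}\mathcal{T}$,
\[
Y\cdot\tilde{L}_{Y'}(T)\in\tilde{L}_{Y'}(YT)+J_{p,w,\sigma,k-1}+F_{p-1}\mathcal{T}.
\]
To verify this, write $Y'=Y_{1}\cdots Y_{p}$ and expand $Y\cdot Y_{1}\cdots Y_{p}=Y_{1}\cdots Y_{p}Y+\sum_{j}Y_{1}\cdots[Y,Y_{j}]\cdots Y_{p}$ as in Lemma \ref{L:Abe4}. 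Since $Y$ and every $Y_{j}$ lie in $\mathfrak{n}^{ww_{0}}$, each $[Y,Y_{j}]$ belongs to $\mathfrak{n}^{ww_{0}}$ and decomposes uniquely as $A_{j}+C_{j}$ with $A_{j}\in\mathfrak{n}^{ww_{0}}\cap\bar{\mathfrak{n}}$ and $C_{j}\in\mathfrak{n}^{ww_{0}}\cap\mathfrak{n}\subset\mathfrak{p}$ (there is no $\bar{\mathfrak{n}}^{ww_{0}}\cap\bar{\mathfrak{n}}$ component, since $\mathfrak{n}^{ww_{0}}\cap\bar{\mathfrak{n}}^{ww_{0}}=0$). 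The $A_{j}$ summand has strictly larger $H$-weight than $-k$ and hence lies in $J_{p,w,\sigma,k-1}$, while the $C_{j}$ summand, once commuted into PBW form with $\mathfrak{n}^{ww_{0}}\cap\bar{\mathfrak{n}}$-factors ordered to the left, yields only terms of transversal degree at most $p-1$ and hence lies in $F_{p-1}\mathcal{T}$.

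Passing to the quotient $\Gr^{p}\mathcal{T}$, the refined PBW formula translates into a congruence of $Y$-action modulo the image of $J_{p,w,\sigma,k-1}$ in $\Gr^{p}\mathcal{T}$, so the induction on $k$ of Lemma \ref{L:Abe4} can be repeated verbatim to give $(\overline{J_{p,w,\sigma,k}})^{[\mathfrak{n}^{ww_{0}}\cap\mathfrak{n}]}\subset J_{p,w,\sigma}/J_{p-1,w,\sigma}$ inside $\Gr^{p}\mathcal{T}$, where $\overline{\,\cdot\,}$ denotes image in $\Gr^{p}\mathcal{T}$. Taking the union over $k$ and noting $\mathfrak{n}^{ww_{0}}\cap\mathfrak{n}\subset\mathfrak{n}$ yields the desired reverse inclusion $\Gr^{p}\mathcal{T}^{[\mathfrak{n}]}\subset J_{p,w,\sigma}/J_{p-1,w,\sigma}$. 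The main obstacle is the transversal degree bookkeeping in the refined PBW formula: one must carefully check that commuting each $C_{j}\in\mathfrak{p}$ through the remaining $Y_{k}$'s only produces terms with at most $p-1$ factors from $\mathfrak{n}^{ww_{0}}\cap\bar{\mathfrak{n}}$, even though iterated commutators $[C_{j},Y_{k}]\in\mathfrak{n}^{ww_{0}}$ can themselves contribute new $\mathfrak{n}^{ww_{0}}\cap\bar{\mathfrak{n}}$-factors along the way.
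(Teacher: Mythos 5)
Your argument is correct and follows essentially the same route as the paper: injectivity from left exactness of the Casselman--Jacquet functor together with Lemma \ref{L:Abe-Fp}, and surjectivity by redoing the $H$-weight induction of Lemma \ref{L:Abe4} (via the commutation identity \eqref{Eq:Abe4-3}) modulo $F_{p-1}\mathcal{T}(C(w),E(\sigma))$, which is exactly how the paper argues. The transversal-degree bookkeeping you flag as the main obstacle is harmless: each commutator with a $C_{j}\in\mathfrak{n}^{ww_{0}}\cap\mathfrak{n}$ consumes one factor from $\mathfrak{n}^{ww_{0}}\cap\bar{\mathfrak{n}}$ while producing at most one new one, so no monomial ever has more than $p-1$ such factors; alternatively this follows directly from the $\mathcal{U}(\mathfrak{p})$-invariance of the transversal filtration (Lemma \ref{L:3.5}) combined with Lemma \ref{L:Abe-A.3}(ii).
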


\begin{proof}
There is a short exact sequence \[0\rightarrow F_{p-1}\mathcal{T}(C(w),E(\sigma))\rightarrow F_{p}\mathcal{T}(C(w),
E(\sigma))\rightarrow\Gr^{p}\mathcal{T}(C(w),E(\sigma))\rightarrow 0.\] By the left exactness of the Casselman-Jacquet
functor and Lemma \ref{L:Abe-Fp}, it suffices to show the following assertion: each element in $\Gr^{p}\mathcal{T}(C(w),
E(\sigma))^{[\mathfrak{n}]}$ is the image of some element of $J_{p,w,\sigma}$ in $\Gr^{p}\mathcal{T}(C(w),E(\sigma))$.

By Lemma \ref{L:Abe-A.3}, we have \begin{eqnarray}\label{Eq:Gp-1}&&\Gr^{p}\mathcal{T}(C(w),E(\sigma))\\&=&\nonumber
F_{p}\mathcal{T}(C(w),E(\sigma))/F_{p-1}\mathcal{T}(C(w),E(\sigma))\\&\cong&\nonumber\mathcal{U}_{p}(\mathfrak{n}^{ww_{0}}
\cap\bar{\mathfrak{n}})F_{0}\mathcal{T}(C(w),E(\sigma))/\mathcal{U}_{p-1}(\mathfrak{n}^{ww_{0}}
\cap\bar{\mathfrak{n}})F_{0}\mathcal{T}(C(w),E(\sigma))\\&\cong&\nonumber(\mathcal{U}_{p}(\mathfrak{n}^{ww_{0}}
\cap\bar{\mathfrak{n}})/\mathcal{U}_{p-1}(\mathfrak{n}^{ww_{0}}\cap\bar{\mathfrak{n}}))\otimes_{\mathbb{C}}
F_{0}\mathcal{T}(C(w),E(\sigma)).\end{eqnarray} Following the proof of Lemma \ref{L:Abe4}, we write  \[\mathcal{U}_{p}(\mathfrak{n}^{ww_{0}}\cap\bar{\mathfrak{n}})[k]=\mathcal{U}_{p}(\mathfrak{n}^{ww_{0}}\cap
\bar{\mathfrak{n}})\cap\mathcal{U}(\mathfrak{n}^{ww_{0}}\cap\bar{\mathfrak{n}})[k]\] and put \begin{equation}
\label{Eq:Jw4}J_{p,w,\sigma,k}\!=\!\span\{\tilde{L}_{Y}\!T:\!Y\!\in\mathcal{U}_{p}\!(\mathfrak{n}^{ww_{0}}\!\cap
\bar{\mathfrak{n}})[-k'],\!k'\!\leq\!k,\!T\in F_{0}\!\mathcal{T}\!(C(w),\!E(\sigma))\}.\end{equation} Then,
\[0\subset J_{p,w,\sigma,0}\subset\cdots J_{p,w,\sigma,k}\subset\cdots\] form an exhaustive ascending filtration
of $F_{p}\mathcal{T}(C(w),E(\sigma))$. Moreover, this filtration is compatible with the corresponding filtration of
$F_{p-1}\mathcal{T}(C(w),E(\sigma))$ in the sense that \[J_{p,w,\sigma,k}\cap F_{p-1}\mathcal{T}(C(w),E(\sigma))=
J_{p-1,w,\sigma,k}.\] Then, \[J_{p,w,\sigma,k}+F_{p-1}\mathcal{T}(C(w),E(\sigma))/F_{p-1}\mathcal{T}(C(w),E(\sigma))
\cong J_{p,w,\sigma,k}/J_{p-1,w,\sigma,k}\] form an exhaustive filtration of $\Gr^{p}\mathcal{T}(C(w),E(\sigma))$.
As in the proof of Lemma \ref{L:Abe4}, using \eqref{Eq:Abe4-3} one can show that \begin{eqnarray*}&&((J_{p,w,
\sigma,k}+F_{p-1}\mathcal{T}(C(w),E(\sigma)))/F_{p-1}\mathcal{T}(C(w),E(\sigma)))^{[\mathfrak{n}^{ww_{0}}
\cap\mathfrak{n}]}\\&\subset&(J_{p,w,\sigma}+J_{p,w,\sigma,k-1}+F_{p-1}\mathcal{T}(C(w),E(\sigma)))/
F_{p-1}\mathcal{T}(C(w),E(\sigma))\end{eqnarray*} for each $k\geq 0$. Taking induction on $k$, it follows that
\begin{eqnarray*}&&(F_{p}\mathcal{T}(C(w),E(\sigma))/F_{p-1}\mathcal{T}(C(w),E(\sigma)))^{[\mathfrak{n}^{ww_{0}}
\cap\mathfrak{n}]}\\&\subset&(J_{p,w,\sigma}+F_{p-1}\mathcal{T}(C(w),E(\sigma)))/F_{p-1}\mathcal{T}(C(w),E(\sigma)).
\end{eqnarray*} Thus, any element in $\Gr^{p}\mathcal{T}(C(w),E(\sigma))^{[\mathfrak{n}]}$ is the image of some
element of $J_{p,w,\sigma}$ in $\Gr^{p}\mathcal{T}(C(w),E(\sigma))$.
\end{proof}

\begin{lemma}\label{L:Abe5}
For each $w\in W$ and each $p\in\mathbb{Z}$, there is an exact sequence \begin{eqnarray*}&0&\longrightarrow
F_{p-1}\mathcal{T}(C(w),E(\sigma))^{[\mathfrak{n}]}\longrightarrow F_{p}\mathcal{T}
(C(w),E(\sigma))^{[\mathfrak{n}]}\\&&\longrightarrow\Gr^{p}\mathcal{T}
(C(w),E(\sigma))^{[\mathfrak{n}]}\longrightarrow 0.\end{eqnarray*}
\end{lemma}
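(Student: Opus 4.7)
The plan is to reduce everything to the explicit descriptions of the Casselman-Jacquet modules already obtained in Lemmas \ref{L:Abe-Fp} and \ref{L:Abe-Gp}. Applying the left-exact functor $(-)^{[\mathfrak{n}]}$ to the tautological short exact sequence
\[
0\longrightarrow F_{p-1}\mathcal{T}(C(w),E(\sigma))\longrightarrow F_{p}\mathcal{T}(C(w),E(\sigma))\longrightarrow \Gr^{p}\mathcal{T}(C(w),E(\sigma))\longrightarrow 0
\]
immediately yields the exactness of the first two arrows in the claimed sequence, so the only non-trivial point is the surjectivity of
\[
F_{p}\mathcal{T}(C(w),E(\sigma))^{[\mathfrak{n}]}\longrightarrow\Gr^{p}\mathcal{T}(C(w),E(\sigma))^{[\mathfrak{n}]}.
\]

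To establish this surjectivity, I would identify the three terms of the putative exact sequence using the results already at hand. By Lemma \ref{L:Abe-Fp}, the middle term is $J_{p,w,\sigma}$, while the left term is $J_{p-1,w,\sigma}$. By Lemma \ref{L:Abe-Gp}, the right term is $J_{p,w,\sigma}/J_{p-1,w,\sigma}$. Under these identifications, the map induced on Casselman-Jacquet modules is nothing but the canonical projection $J_{p,w,\sigma}\twoheadrightarrow J_{p,w,\sigma}/J_{p-1,w,\sigma}$, which is manifestly surjective with kernel exactly $J_{p-1,w,\sigma}$.

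The only bookkeeping step is checking that the identifications in Lemmas \ref{L:Abe-Fp} and \ref{L:Abe-Gp} are genuinely compatible with the inclusion $F_{p-1}\mathcal{T}(C(w),E(\sigma))\hookrightarrow F_{p}\mathcal{T}(C(w),E(\sigma))$ and the quotient map to $\Gr^{p}\mathcal{T}(C(w),E(\sigma))$. This is built into the construction: both $J_{p-1,w,\sigma}$ and $J_{p,w,\sigma}$ are defined inside $\mathcal{T}(C(w),E(\sigma))$ via \eqref{Jpw}, and the obvious inclusion $J_{p-1,w,\sigma}\subset J_{p,w,\sigma}$ agrees with the inclusion $F_{p-1}\mathcal{T}(C(w),E(\sigma))^{[\mathfrak{n}]}\subset F_{p}\mathcal{T}(C(w),E(\sigma))^{[\mathfrak{n}]}$; likewise the isomorphism in Lemma \ref{L:Abe-Gp} is induced by the quotient $F_{p}\to\Gr^{p}$.

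I do not anticipate any serious obstacle here: the heavy lifting—showing that the $\mathfrak{n}$-finite vectors in $F_{p}\mathcal{T}(C(w),E(\sigma))$ and in $\Gr^{p}\mathcal{T}(C(w),E(\sigma))$ are exactly $J_{p,w,\sigma}$ and $J_{p,w,\sigma}/J_{p-1,w,\sigma}$ respectively—was already carried out in Lemmas \ref{L:Abe-Fp} and \ref{L:Abe-Gp} using the grading argument from \eqref{Eq:YT}. The present lemma is essentially the packaging of those two computations as the right-exactness of $(-)^{[\mathfrak{n}]}$ on this particular short exact sequence.
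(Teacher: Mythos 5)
Your proof is correct and follows essentially the same route as the paper, whose own proof is simply that the lemma follows directly from Lemmas \ref{L:Abe-Fp} and \ref{L:Abe-Gp}; your write-up just makes explicit the identification of the three terms with $J_{p-1,w,\sigma}$, $J_{p,w,\sigma}$ and $J_{p,w,\sigma}/J_{p-1,w,\sigma}$ and the compatibility of the maps. No gaps.
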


\begin{proof}
This follows from Lemmas \ref{L:Abe-Fp} and \ref{L:Abe-Gp} directly.
\end{proof}

\begin{lemma}\label{L:Abe6}
For each $k$, there is an exact sequence \[0\rightarrow I_{k-1,\sigma}^{[\mathfrak{n}]}\rightarrow
I_{k,\sigma}^{[\mathfrak{n}]}\rightarrow(I_{k,\sigma}/I_{k-1,\sigma})^{[\mathfrak{n}]}\rightarrow 0.\]
\end{lemma}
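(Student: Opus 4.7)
The exact sequence of $\mathcal{U}(\mathfrak{g})$-modules $0\to I_{k-1,\sigma}\to I_{k,\sigma}\to I_{k,\sigma}/I_{k-1,\sigma}\to 0$ from Lemma \ref{L:Bruhat2} identifies the quotient with $\mathcal{T}(C(w_k),E(\sigma))$. The Casselman--Jacquet functor $V\mapsto V^{[\mathfrak{n}]}$ is left exact (as recorded at the start of Section \ref{S:Abe}), so applying it yields the injection $I_{k-1,\sigma}^{[\mathfrak{n}]}\hookrightarrow I_{k,\sigma}^{[\mathfrak{n}]}$ and exactness in the middle automatically. The content of the lemma is therefore the surjectivity of the induced map $I_{k,\sigma}^{[\mathfrak{n}]}\to\mathcal{T}(C(w_k),E(\sigma))^{[\mathfrak{n}]}$.

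To prove this surjectivity I would exploit the explicit description of the target from Lemma \ref{L:Abe4} together with its proof: $\mathcal{T}(C(w_k),E(\sigma))^{[\mathfrak{n}]}=J_{w_k,\sigma}=J'_{w_k,\sigma}$, which is spanned by elements of the form $\tilde{L}_{Y}((f\otimes u')\delta_{C(w_k)})$ with $Y\in\mathcal{U}(\mathfrak{g})$, $f\in R(C(w_k))$, $u'\in V_{\sigma}^{\ast}$. Since $I_{k,\sigma}^{[\mathfrak{n}]}$ is a $\mathcal{U}(\mathfrak{g})$-submodule of $I_{k,\sigma}$ (by the property of the Casselman--Jacquet module recalled in Section \ref{S:Abe}) and the quotient map is $\mathcal{U}(\mathfrak{g})$-equivariant, the image of $I_{k,\sigma}^{[\mathfrak{n}]}$ is itself $\mathcal{U}(\mathfrak{g})$-stable. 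It therefore suffices to lift each base generator $(f\otimes u')\delta_{C(w_k)}\in F_{0}\mathcal{T}(C(w_k),E(\sigma))^{[\mathfrak{n}]}$ to some $\mathfrak{n}$-nilpotent $\tilde{T}\in I_{k,\sigma}$; the lift of the general element is then $\tilde{L}_{Y}\tilde{T}$, which lies in $I_{k,\sigma}^{[\mathfrak{n}]}$ by the $\mathcal{U}(\mathfrak{g})$-stability.

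For the lifting of the base element I would proceed by induction on $k$. The case $k=1$ is trivial since $I_{0,\sigma}=0$ and the quotient map is the identity. For the inductive step, pick any preimage $\tilde{T}_{0}\in I_{k,\sigma}$ of $(f\otimes u')\delta_{C(w_k)}$, which exists because the quotient map is surjective. By Lemma \ref{L:Abe0}(ii) there is some $m\geq 1$ with $\mathfrak{n}^{m}(f\otimes u')\delta_{C(w_k)}=0$, so $\mathfrak{n}^{m}\tilde{T}_{0}\in I_{k-1,\sigma}$. The task is then to find a suitable correction $R\in I_{k-1,\sigma}$ so that $\tilde{T}_{0}-R$ is annihilated by a sufficiently high power of $\mathfrak{n}$. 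The idea is to peel off the obstruction $\mathfrak{n}^{m}\tilde{T}_{0}$ stratum by stratum: project to each successive quotient $I_{j,\sigma}/I_{j-1,\sigma}=\mathcal{T}(C(w_j),E(\sigma))$ for $j<k$, lift it back using the inductive hypothesis, and assemble the corrections.

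The main obstacle is precisely the correction step: one must verify that the successive projections of $\mathfrak{n}^{m}\tilde{T}_{0}$ to the lower strata $\mathcal{T}(C(w_j),E(\sigma))$ are themselves $\mathfrak{n}$-nilpotent so that the inductive hypothesis can be invoked, and that assembling the local corrections into a global $R\in I_{k-1,\sigma}$ is consistent. Here the algebraic (polynomial on $C(w_k)$) nature of $f$, combined with the sharp degree/depth control furnished by Lemma \ref{L:Abe0}, forces the recursion to terminate after finitely many steps and controls the $\mathfrak{n}$-depth of the final $\tilde{T}:=\tilde{T}_{0}-R$. Once $R$ is so constructed, $\tilde{T}\in I_{k,\sigma}^{[\mathfrak{n}]}$ is the desired nilpotent lift of $(f\otimes u')\delta_{C(w_k)}$, completing the proof of surjectivity.
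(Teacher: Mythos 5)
Your opening reductions agree with the paper's: by left exactness of the Casselman--Jacquet functor the only issue is surjectivity of $I_{k,\sigma}^{[\mathfrak{n}]}\rightarrow(I_{k,\sigma}/I_{k-1,\sigma})^{[\mathfrak{n}]}$, and by Lemma \ref{L:Abe4} the target is $J_{w_k,\sigma}$, spanned by the $\tilde{L}_{Y}((f\otimes u')\delta_{C(w_k)})$; your further remark that it suffices to lift the base generators $(f\otimes u')\delta_{C(w_k)}$, because $I_{k,\sigma}^{[\mathfrak{n}]}$ is a $\mathcal{U}(\mathfrak{g})$-submodule and the quotient map is equivariant, is also legitimate. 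But at exactly this point the paper does not argue as you do: it invokes \cite[Lemma 4.6]{Abe}, which provides a specific $\mathfrak{n}$-nilpotent distribution in $I_{k,\sigma}$ whose restriction to $V^{w_k}$ is the given generator. That existence statement is the whole content of the lemma, and your proposal does not prove it.

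Concretely, the correction scheme you sketch has a genuine gap. Starting from an arbitrary preimage $\tilde{T}_0$ you only know $\mathfrak{n}^{m}\tilde{T}_0\subset I_{k-1,\sigma}$; there is no reason the elements $X\tilde{T}_0$ ($X\in\mathfrak{n}^{m}$), or their images in the lower subquotients $\mathcal{T}(C(w_j),E(\sigma))$, are $\mathfrak{n}$-nilpotent, so the inductive hypothesis (surjectivity for smaller $k$) cannot be applied to them. Even if it could, producing nilpotent lifts of those images only matches classes in subquotients; what is needed is a single $R\in I_{k-1,\sigma}$ with $\mathfrak{n}^{M}(\tilde{T}_0-R)=0$, i.e.\ a simultaneous solution of the equations $u\cdot R=u\cdot\tilde{T}_0$ for all $u\in\mathfrak{n}^{M}$, and nothing in your argument shows such an $R$ exists or that the ``recursion terminates.'' Finally, Lemma \ref{L:Abe0} cannot supply the missing control: it describes $\mathfrak{n}$-nilpotent distributions on the affine space $N/U$ itself (polynomial multiples of $\delta_{N/U}$, with degree bounds), whereas the difficulty here is the behaviour of an extension across $\overline{C(w_k)}\setminus C(w_k)$, where the polynomial $f$ blows up; choosing an extension supported in $Z_k$ on which the $\mathfrak{n}$-action remains controlled is a nontrivial construction, carried out in \cite[Lemma 4.6]{Abe} (and deliberately cited rather than reproved in the paper). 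To complete your proof you would either have to reproduce such an explicit construction or cite Abe's lemma as the paper does.
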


\begin{proof}
We only need to show the surjectivity of the map $I_{k,\sigma}^{[\mathfrak{n}]}\rightarrow (I_{k,\sigma}/I_{k-1,
\sigma})^{[\mathfrak{n}]}$. By Lemma \ref{L:Abe4}, we have \[(I_{k,\sigma}/I_{k-1,\sigma})^{[\mathfrak{n}]}=
\mathcal{T}(C(w_{k}),E(\sigma))^{[\mathfrak{n}]}=J_{w_{k},\sigma}.\] Then, it suffices to find a distribution
in $I_{k,\sigma}^{[\mathfrak{n}]}$ with restriction $\tilde{L}_{Y}((f\otimes u')\delta_{C(w_{k})})$ on $V^{w_{k}}$
for any $Y\in\mathcal{U}(\mathfrak{g})$, $f\in R(C(w_{k}))$ and $u'\in V_{\sigma}^{\ast}$. This is shown in
\cite[Lemma 4.6]{Abe}.
\end{proof}

\begin{lemma}\label{L:Abe7}
Let $0\rightarrow\sigma_{1}\rightarrow\sigma_{2}\rightarrow\sigma_{3}\rightarrow 0$ be a short exact sequence of
finite-dimensional complex linear representations of $P$. Then, the sequence
\[0\rightarrow I(\sigma_{1})^{'[\mathfrak{n}]}\rightarrow I(\sigma_{2})^{'[\mathfrak{n}]}
\rightarrow I(\sigma_{3})^{'[\mathfrak{n}]}\rightarrow 0\] is exact.
\end{lemma}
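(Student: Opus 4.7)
The plan is to apply the exact smooth parabolic induction functor to the given short exact sequence of finite-dimensional representations of $P$, then dualize, and use a double induction to deduce exactness of the Casselman--Jacquet sequence. Left-exactness is automatic from the definition of $V \mapsto V^{[\mathfrak{n}]}$, so the core issue is surjectivity at the opposite end. This will be extracted by combining the Bruhat filtration of Section~\ref{S:Bruhat} with the transversal degree filtration of Section~\ref{S:E-distribution}, followed by a direct computation at the graded level.

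First I would observe that $0 \to \sigma_1 \to \sigma_2 \to \sigma_3 \to 0$ yields an exact sequence of $G$-equivariant algebraic vector bundles $0 \to E(\sigma_1) \to E(\sigma_2) \to E(\sigma_3) \to 0$ on $X = G/P$; taking smooth global sections produces a topologically exact sequence of nuclear Fr\'echet spaces $0 \to I(\sigma_1) \to I(\sigma_2) \to I(\sigma_3) \to 0$. Strong duality gives a corresponding exact sequence of DNF spaces, to which the left-exact Casselman--Jacquet functor applies; what remains is to establish surjectivity of the final map.

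For the surjectivity I exploit the functoriality of the Bruhat filtration: since the morphisms between the $I(\sigma_i)'$'s preserve support in each stratum $Z_k$, they preserve the filtrations $\{I_{k,\sigma_i}\}$. Lemma~\ref{L:Abe6} ensures that the Casselman--Jacquet functor preserves each Bruhat-filtration short exact sequence $0 \to I_{k-1,\sigma}^{[\mathfrak{n}]} \to I_{k,\sigma}^{[\mathfrak{n}]} \to \mathcal{T}(C(w_k), E(\sigma))^{[\mathfrak{n}]} \to 0$, so a $3 \times 3$ diagram chase with a $5$-lemma and induction on $k$ reduces the claim to the exactness of the functor $\sigma \mapsto \mathcal{T}(C(w), E(\sigma))^{[\mathfrak{n}]}$ for each fixed $w \in W$. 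A second induction on the transversal degree $p$, using Lemma~\ref{L:Abe5}, further reduces the problem to the exactness of $\sigma \mapsto \Gr^{p}\mathcal{T}(C(w), E(\sigma))^{[\mathfrak{n}]}$; the passage from the filtered pieces to the union $\mathcal{T}(C(w),E(\sigma)) = \bigcup_{p} F_{p}\mathcal{T}(C(w),E(\sigma))$ is automatic because $V \mapsto V^{[\mathfrak{n}]}$ is itself an ascending union $\bigcup_{k}\ker(\mathfrak{n}^{k}|_{V})$ and therefore commutes with filtered colimits of inclusions.

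Finally, at the graded level, combining Lemmas~\ref{L:Abe-A.3}~(ii), \ref{L:Abe1}, and \ref{L:Abe-Gp} identifies $\Gr^{p}\mathcal{T}(C(w), E(\sigma))^{[\mathfrak{n}]}$ with
\[
\bigl(\mathcal{U}_{p}(\mathfrak{n}^{ww_{0}} \cap \bar{\mathfrak{n}})/\mathcal{U}_{p-1}(\mathfrak{n}^{ww_{0}} \cap \bar{\mathfrak{n}})\bigr) \otimes_{\mathbb{C}} (R(C(w)) \otimes V_{\sigma}^{*}) \delta_{C(w)}.
\]
The dependence on $\sigma$ enters only through the finite-dimensional dual $V_{\sigma}^{*}$, so tensoring the exact sequence $0 \to V_{\sigma_{3}}^{*} \to V_{\sigma_{2}}^{*} \to V_{\sigma_{1}}^{*} \to 0$ over $\mathbb{C}$ with the free factor $\bigl(\mathcal{U}_{p}/\mathcal{U}_{p-1}\bigr)(\mathfrak{n}^{ww_{0}} \cap \bar{\mathfrak{n}}) \otimes R(C(w))$ preserves exactness. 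The main obstacle I anticipate is the bookkeeping needed to run the two nested inductions cleanly and to verify that the structural isomorphisms of Section~\ref{S:Abe} are natural in $\sigma$, so that the $5$-lemma applies at each level; once this naturality is in place, the argument becomes essentially formal.
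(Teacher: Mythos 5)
Your proposal is correct, and its backbone coincides with the paper's own argument: pass to the Bruhat filtration, use Lemma \ref{L:Abe6} to see that the Casselman--Jacquet functor preserves each step of that filtration, and run an induction on $k$ with a five-lemma diagram chase, so that everything reduces to surjectivity on the pieces $\mathcal{T}(C(w_k),E(\sigma))^{[\mathfrak{n}]}$. Where you diverge is the endgame: the paper stops at that point, because Lemma \ref{L:Abe4} already identifies $\mathcal{T}(C(w),E(\sigma))^{[\mathfrak{n}]}$ with the explicit span $J_{w,\sigma}$ of the distributions $\tilde{L}_{Y}((f\otimes u')\delta_{C(w)})$, and a map carrying this spanning set onto the corresponding spanning set for the other representation is surjective as soon as the induced map on the finite-dimensional duals $V_{\sigma}^{\ast}$ is; no further filtration is needed. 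Your extra induction on the transversal degree $p$ via Lemma \ref{L:Abe5}, the passage to $\Gr^{p}$ via Lemma \ref{L:Abe-Gp}, and the graded identification through Lemmas \ref{L:Abe-A.3}(ii) and \ref{L:Abe1} are all valid (and the colimit step is harmless, since the Casselman--Jacquet functor commutes with the exhaustive union over $p$), but they re-derive by hand what Lemma \ref{L:Abe4} gives in one line, at the price of the naturality bookkeeping you yourself flag as the main burden. One point where you are more careful than the paper: you dualize honestly, so on the graded level the finite-dimensional sequence appears as $0\to V_{\sigma_{3}}^{\ast}\to V_{\sigma_{2}}^{\ast}\to V_{\sigma_{1}}^{\ast}\to 0$ and the sequence of Casselman--Jacquet modules comes out in the reversed order; with the paper's definition $I(\sigma)'=C^{\infty}(E(\sigma))'$ this is the correct orientation, the lemma's wording being a covariant relabeling of the same content, and it is worth making that convention explicit in a write-up.
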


\begin{proof}
We show that: for each $k\geq 0$, the following sequence \[0\rightarrow I_{k,\sigma_{1}}^{[\mathfrak{n}]}\rightarrow  I_{k,\sigma_{2}}^{[\mathfrak{n}]}\rightarrow I_{k,\sigma_{3}}^{[\mathfrak{n}]}\rightarrow 0\] is exact. By the left
exactness of the Casselman-Jacquet functor, we only need to show that the map $I_{k,\sigma_{2}}^{[\mathfrak{n}]}
\rightarrow I_{k,\sigma_{3}}^{[\mathfrak{n}]}$ is surjective. When $k=0$, we have $I_{k,\sigma_{2}}^{[\mathfrak{n}]}
=I_{k,\sigma_{3}}^{[\mathfrak{n}]}=0$. Hence, the assertion is trivial. Let $l\geq 1$ and suppose the assertion holds
true whenever $k<l$. When $k=l$, we have the following commutative diagram: \begin{equation*}\xymatrix@C=1.0cm{0\ar[r]&I_{l-1,\sigma_{2}}^{[\mathfrak{n}]}\ar[d]\ar[r]&
I_{l,\sigma_{2}}^{[\mathfrak{n}]}\ar[d]\ar[r]&\mathcal{T}(C(w_{l}),E(\sigma_{2}))^{[\mathfrak{n}]}\ar[d]\ar[r]&
0\\0\ar[r]&I_{l-1,\sigma_{3}}^{[\mathfrak{n}]}\ar[r]&I_{l,\sigma_{3}}^{[\mathfrak{n}]}\ar[r]&\mathcal{T}(C(w_{l}),
E(\sigma_{3}))^{[\mathfrak{n}]}\ar[r]&0.}\end{equation*} By Lemma \ref{L:Abe6}, two horizontal lines of this
commutative diagram are short exact sequences. By Lemma \ref{L:Abe4}, the map \[\mathcal{T}(C(w_{l}),
E(\sigma_{2}))^{[\mathfrak{n}]}\rightarrow\mathcal{T}(C(w_{l}),E(\sigma_{3}))^{[\mathfrak{n}]}\] is surjective.
By hypothesis, the map $I_{l-1,\sigma_{2}}^{[\mathfrak{n}]}\rightarrow I_{l-1,\sigma_{3}}^{[\mathfrak{n}]}$ is
surjective. By the five lemma, the surjectivity of the map $I_{l,\sigma_{2}}^{[\mathfrak{n}]}\rightarrow
I_{l,\sigma_{3}}^{[\mathfrak{n}]}$ follows. Taking $k=r$, we get the conclusion of this lemma.
\end{proof}

Among results in this section, Lemmas \ref{L:Abe0}, \ref{L:Abe1}, \ref{L:Abe4} and \ref{L:Abe6} follow from results
in \cite{Abe}. Let's remark on the connection and the difference between our proof and the proof in \cite{Abe}. The
hard part in the proof of Lemma \ref{L:Abe1} is showing that each distribution $(f\otimes u')\delta_{C(w)}$
($f\in R(C(w))$, $u'\in V_{\sigma}^{\ast}$) is annihilated by a power of $\mathfrak{n}$, which is the content of
\cite[\S 3]{Abe}. Different with Abe's proof which defined and used a right action of $\mathcal{U}(\mathfrak{g})$
on $E(\sigma)$-distributions, we only use the left action $\tilde{L}_{\cdot}$ by differential operators. The hard part
in the proof of Lemma \ref{L:Abe4} is showing that \begin{equation*}\mathcal{T}(C(w),E(\sigma))^{[\mathfrak{n}]}
\subset J_{w,\sigma},\end{equation*} which is the content of \cite[\S 2]{Abe}. We avoid the backward induction used
in \cite[Lemma 2.9]{Abe}, and use induction on the $H$-grading degree instead. In another aspect, we consider only
principal series rather than representations induced from a general real parabolic subgroup, and only the
Casselman-Jacquet functor with respect to the nilradical of a minimal real parabolic subalgebra and the trivial
character $\eta=1$. For this reason we avoid complicated computation of differential operator action in \cite{Abe}
and use only elementary representation theory.

\section{A comparison result on nilpotent homogeneous affine variety}\label{S:comparison-poly}

Let $U$ be a closed linear subgroup of $N$. The same as in Lemma \ref{L:Abe1} we have $\mathcal{S}(N/U)'^{[\mathfrak{n}]}
=R(N/U)\delta_{N/U}$. In this section we show that the inclusion $R(N/U)\delta_{N/U}\subset\mathcal{S}(N/U)'$ induces
isomorphisms \[H^{i}(\mathfrak{n},R(N/U)\delta_{N/U})=H^{i}(\mathfrak{n},\mathcal{S}(N/U)'),\ \forall i\geq 0.\] Our
proof is inspired by ideas in \cite[\S 5]{Casselman-Hecht-Milicic}. Note that all closed subgroups of a unipotent
algebraic group are connected.

\begin{lemma}\label{L:reduction1}
Let $U$ be a closed linear subgroup of $N$, and $C$ be a one-dimensional central closed subgroup of $N$ which is not
contained in $U$. Put $V=CU\subset N$. Then \begin{equation}\label{centralcoho2}
H^{i}(\mathfrak{c},\mathcal{S}(N/U)')=\left\{\begin{matrix}\mathcal{S}(N/V)'&\text{for~}i=0\\0&\text{for~}i\neq 0
\end{matrix}\right.\end{equation} and \begin{equation}\label{centralcoho}
H^{i}(\mathfrak{c},R(N/U)\delta_{N/U})=\left\{\begin{matrix}R(N/V)\delta_{N/V}&\text{for~}i=0\\0
&\text{for~}i\neq 0.\end{matrix}\right.\end{equation}
\end{lemma}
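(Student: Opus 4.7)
Because $\mathfrak{c}$ is one-dimensional abelian (say $\mathfrak{c} = \bbC X$), the Chevalley-Eilenberg complex for a $\mathfrak{c}$-module $M$ reduces to $0 \to M \xrightarrow{X} M \to 0$, so $H^0(\mathfrak{c}, M) = \Ker X$, $H^1(\mathfrak{c}, M) = \Coker X$, and higher cohomology vanishes automatically; the plan is to compute these kernels and cokernels on each of the two modules. The key geometric input is that, since $C$ is one-dimensional central and not contained in $U$, one has $C \cap U = \{e\}$, so $V = CU \cong C \times U$ and the projection $\pi\colon N/U \to N/V$ is a principal $C$-bundle. Since $C \cong \mathbb{G}_a$ and $N/V$ is an affine algebraic variety (a quotient of the unipotent group $N$ by a closed subgroup), this $\mathbb{G}_a$-torsor is algebraically trivial, so I would choose an algebraic section $\sigma\colon N/V \to N/U$ and a linear coordinate $t$ on $C$ (so $X = \partial/\partial t$), yielding a polynomial isomorphism $\Phi\colon N/V \times C \xrightarrow{\sim} N/U$, $(y,c) \mapsto c\cdot\sigma(y)$, that intertwines the $C$-action on $N/U$ with translation in the second factor.

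\textbf{Tensor decomposition.} Next, $\Phi$ should induce compatible isomorphisms
\begin{equation*}
\mathcal{S}(N/U) \cong \mathcal{S}(N/V)\,\hat\otimes\,\mathcal{S}(\bbR), \qquad R(N/U) \cong R(N/V) \otimes_{\bbC} \bbC[t],
\end{equation*}
with $\delta_{N/U} = \delta_{N/V} \otimes dt$, so $R(N/U)\delta_{N/U} \cong R(N/V)\delta_{N/V} \otimes \bbC[t]\,dt$ and $\mathcal{S}(N/U)' \cong \mathcal{S}(N/V)'\,\hat\otimes\,\mathcal{S}(\bbR)'$. Under these identifications the $\mathfrak{c}$-action becomes $1 \otimes \partial_t$, so by exactness of the nuclear tensor product the problem decouples, and it suffices to compute $\Ker\partial_t$ and $\Coker\partial_t$ on the one-variable factors $\mathcal{S}(\bbR)'$ and $\bbC[t]\,dt$.

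\textbf{One-variable analysis.} On $\bbC[t]\,dt$ the operator $\partial_t$ sends $p(t)\,dt$ to $p'(t)\,dt$: the kernel is $\bbC\cdot dt$, and surjectivity is elementary antidifferentiation of polynomials. On $\mathcal{S}(\bbR)'$, $\Ker\partial_t$ consists of constant tempered distributions, which is one-dimensional and spanned by Lebesgue measure $dt$; for surjectivity I would apply the closed range theorem to the pre-dual map $-\partial_t\colon \mathcal{S}(\bbR) \to \mathcal{S}(\bbR)$, which is injective (no non-zero constant is Schwartz) with closed image $\{f\in\mathcal{S}(\bbR):\int f = 0\}$ of codimension one. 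Reassembling gives both \eqref{centralcoho2} and \eqref{centralcoho}, with the natural embeddings $\mathcal{S}(N/V)'\hookrightarrow\mathcal{S}(N/U)'$ and $R(N/V)\delta_{N/V}\hookrightarrow R(N/U)\delta_{N/U}$ identified with $T \mapsto dt \otimes T$, i.e.\ pullback along $\pi$ (integration over $C$-fibres).

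\textbf{Main obstacle.} The principal technical point is the tensor decomposition step: verifying that the polynomial product decomposition of $N/U$ is compatible with the Schwartz-space structure used in the paper, so that $\mathcal{S}(N/U)$ and $R(N/U)\delta_{N/U}$ genuinely split as (completed) tensor products. Once this is granted, the rest is a clean one-variable computation, the only nontrivial analytic input being the surjectivity of $\partial_t$ on $\mathcal{S}(\bbR)'$, which is standard but invokes the closed range theorem rather than a naïve pointwise antiderivative (whose temperedness is not automatic).
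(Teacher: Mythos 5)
Your argument is correct, and it reaches the same analytic core as the paper by a different route. The paper never trivializes the fibration: it works intrinsically with the short exact sequence
\begin{equation*}
0\rightarrow\mathcal{S}(N/U)\xrightarrow{\ \xi\ }\mathcal{S}(N/U)\xrightarrow{\ \pi\ }\mathcal{S}(N/V)\rightarrow 0,
\end{equation*}
where $\xi$ is the $N$-invariant vector field generating the $C$-action (derivative along the fibre) and $\pi$ is integration over the $C$-fibres, then dualizes to get $0\rightarrow\mathcal{S}(N/V)'\rightarrow\mathcal{S}(N/U)'\xrightarrow{\xi^{*}}\mathcal{S}(N/U)'\rightarrow 0$ and the analogous sequence for $R(N/U)\delta_{N/U}$, which immediately gives the kernel and cokernel of the one-dimensional $\mathfrak{c}$-action (checking $\pi^{*}(f\delta_{N/V})=\tilde f\delta_{N/U}$ as you also do). You instead use that $C\cap U=\{e\}$, that a $\mathbb{G}_a$-torsor over the affine base $N/V$ is algebraically trivial, and then a K\"unneth-type decoupling $\mathcal{S}(N/U)'\cong\mathcal{S}(N/V)'\,\hat\otimes\,\mathcal{S}(\mathbb{R})'$, $R(N/U)\delta_{N/U}\cong R(N/V)\delta_{N/V}\otimes\mathbb{C}[t]\,dt$, reducing to the one-variable facts about $\partial_t$. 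The content is the same (surjectivity and closed range of differentiation along the central direction, identification of the invariants with pullbacks from the base), but the trade-offs differ: the paper's intrinsic sequence avoids choosing a section, avoids the kernel theorem and exactness of $\hat\otimes$ with nuclear spaces, and avoids having to check that the algebraic trivialization is compatible with the Schwartz structure and that $\delta_{N/U}=\delta_{N/V}\otimes dt$ (your version needs the integration-in-stages formula for $N\supset V\supset U$ and the fact that polynomial isomorphisms with polynomial inverses preserve Schwartz spaces); your version, in exchange, makes the embedding $T\mapsto dt\otimes T$ and the one-variable analysis completely explicit, and correctly isolates the only genuinely analytic input (tempered antiderivatives via closed range), a point the paper's proof leaves implicit when it asserts exactness of its sequence and of its dual. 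So your proposal is acceptable, provided the decoupling step is carried out with the nuclearity/kernel-theorem justification you indicate rather than treated as formal.
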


\begin{proof}
Let $p:N/U\longrightarrow N/V$ be the natural projection. There is a natural short exact sequence: \begin{equation*}
\xymatrix@R=0.05cm{0\ar[r]&\mathcal{S}(N/U)\ar[r]^{\xi}&\mathcal{S}(N/U)\ar[r]^{\pi}&\mathcal{S}(N/V)\ar[r]&0,}
\end{equation*} where $\xi$ is an $N$ invariant differential operator on $N/U$ associated to a nonzero element
$\xi\in\mathfrak{z}_{0}$ and the map $\pi$ is defined by \[\pi(h)(p(x))=\int_{C}h(xz)\d z\ (\forall h\in
\mathcal{S}(N/U)), \forall x\in N/U.\] Then, it induces the following exact sequences: \begin{equation*}
\xymatrix@R=0.05cm{0\ar[r]&\mathcal{S}(N/V)'\ar[r]^{\pi^*}&\mathcal{S}(N/U)'\ar[r]^{\xi^*}&\mathcal{S}(N/U)'
\ar[r]&0}\end{equation*} and \begin{equation*}
\xymatrix@R=0.05cm{0\ar[r]& R(N/V)\delta_{N/V}\ar[r]^{\pi^*}& R(N/U)\delta_{N/U}\ar[r]^{\xi^*}& R(N/U)\delta_{N/U}
\ar[r]&0}.\end{equation*} Note that for each $f\in R(N/V)$ and each $h\in\mathcal{S}(N/U)$, we have
\begin{eqnarray*}
&&\quad\langle\pi^{*}(f\delta_{N/V}),h\rangle\\
&&=\langle f\delta_{N/V},\pi(h)\rangle\\
&&=\int_{N/V}f(y)\pi(h)(y)\d y\\
&&=\int_{N/V}f(y)\int_{V/U}h(yz)\d z\d y\\
&&=\int_{N/U}\tilde{f}(x)h(x)\d x\\
&&=\langle\tilde{f}\delta_{N/U},h\rangle
\end{eqnarray*}
where $\tilde{f}\in R(N/U)$ is given by $\tilde{f}(x)=f(p(x))\ (\forall x\in N/U)$. Then, $\pi^*(f\delta_{N/V})
=\tilde{f}\delta_{N/U}$. Consequently, \begin{equation*}
H^{i}(\mathfrak{c},\mathcal{S}(N/U)')=\left\{\begin{matrix}\mathcal{S}(N/V)'&\text{for~}i=0\\0&\text{for~}i\neq 0
\end{matrix}\right.\end{equation*}
and \begin{equation*}
H^{i}(\mathfrak{c},R(N/U)\delta_{N/U})=\left\{\begin{matrix}R(N/V)\delta_{N/V}&\text{for~}i=0\\0
&\text{for~}i\neq 0.\end{matrix}\right.\end{equation*}
\end{proof}

\begin{lemma}\label{L:reduction2}
Let $D$ be an abelian real linear group. Then we have \begin{equation}\label{D:S2}
H^{j}(\mathfrak{d},\mathcal{S}(D)')=\left\{\begin{matrix}\mathbb{C}\delta_{D} &\text{for~}j=0\\0&\text{for~}j\neq 0
\end{matrix}\right.\end{equation} and \begin{equation}\label{D:R2}
H^{j}(\mathfrak{d},R(D)\delta_{D})=\left\{\begin{matrix}\mathbb{C}\delta_{D} &\text{for~}j=0\\0&\text{for~}j\neq 0.
\end{matrix}\right.\end{equation}
\end{lemma}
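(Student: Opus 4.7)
The plan is to do induction on $n:=\dim D$. The base case $n=0$ is trivial: $D$ is a point, $\mathfrak{d}=0$, and both $\mathcal{S}(D)'$ and $R(D)\delta_{D}$ are $\mathbb{C}\delta_{D}$, so only $H^{0}$ survives and equals $\mathbb{C}\delta_{D}$.

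For the inductive step with $n\geq 1$, choose any one-dimensional closed connected subgroup $C\subset D$; it is automatically central since $D$ is abelian. I then apply the argument of Lemma \ref{L:reduction1} with the ambient group taken to be $D$ itself, with $U=\{e\}$ and hence $V=C$. This yields
\[H^{i}(\mathfrak{c},\mathcal{S}(D)')=\begin{cases}\mathcal{S}(D/C)'&i=0\\ 0&i\neq 0\end{cases}\quad\textrm{and}\quad H^{i}(\mathfrak{c},R(D)\delta_{D})=\begin{cases}R(D/C)\delta_{D/C}&i=0\\ 0&i\neq 0,\end{cases}\]
with the degree-zero identifications realized by the pullback $\pi^{*}$ along the quotient $p:D\to D/C$.

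Next I feed this into the Hochschild--Serre spectral sequence attached to the short exact sequence $0\to\mathfrak{c}\to\mathfrak{d}\to\mathfrak{d}/\mathfrak{c}\to 0$, with coefficients successively in $V=\mathcal{S}(D)'$ and $V=R(D)\delta_{D}$:
\[E_{2}^{p,q}=H^{p}(\mathfrak{d}/\mathfrak{c},H^{q}(\mathfrak{c},V))\Longrightarrow H^{p+q}(\mathfrak{d},V).\]
The vanishing of $H^{q}(\mathfrak{c},V)$ for $q>0$ collapses the $E_{2}$ page onto the row $q=0$, so $H^{j}(\mathfrak{d},\mathcal{S}(D)')\cong H^{j}(\mathfrak{d}/\mathfrak{c},\mathcal{S}(D/C)')$ and $H^{j}(\mathfrak{d},R(D)\delta_{D})\cong H^{j}(\mathfrak{d}/\mathfrak{c},R(D/C)\delta_{D/C})$. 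The inductive hypothesis applied to the abelian real linear group $D/C$ of dimension $n-1$ now closes the loop: both right-hand sides are $\mathbb{C}\delta_{D/C}$ for $j=0$ and vanish otherwise. A direct computation with Fubini shows $\pi^{*}(\delta_{D/C})=\delta_{D}$, so the surviving one-dimensional space in degree zero is precisely $\mathbb{C}\delta_{D}$ sitting inside $\mathcal{S}(D)'$ or $R(D)\delta_{D}$ as claimed.

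The main obstacle I anticipate is making sure Lemma \ref{L:reduction1} transports from the fixed nilradical $N$ of the paper to the general abelian $D$ here. Its proof uses only that $C$ is a one-dimensional central closed subgroup and that the fiber integration $h\mapsto\int_{C}h(xz)\,\d z$ produces the exact sequence $0\to\mathcal{S}(N/V)\to\mathcal{S}(N/U)\to\mathcal{S}(N/U)\to 0$ (and its polynomial analogue); neither step uses any special feature of $N$ beyond being a real Lie group in which $C$ is central. Once this mild generalization is granted, the remaining work is a routine spectral-sequence collapse, and the two statements in the lemma are proved by the same argument in parallel.
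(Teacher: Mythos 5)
Your proposal is correct and follows essentially the same route as the paper: induction on $\dim D$, applying Lemma \ref{L:reduction1} (with ambient group $D$, $U=\{e\}$, $V=C$) to collapse the Hochschild--Serre spectral sequence for $0\to\mathfrak{c}\to\mathfrak{d}\to\mathfrak{d}/\mathfrak{c}\to 0$ and then invoking the inductive hypothesis for $D/C$. Your explicit remarks that Lemma \ref{L:reduction1} transports verbatim to the abelian group $D$ and that $\pi^{*}(\delta_{D/C})=\delta_{D}$ only make explicit what the paper leaves implicit.
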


\begin{proof}
We show \eqref{D:R2}. The equality \eqref{D:S2} can be proved in the same way. Prove by induction on $\dim D$. When
$\dim D=0$, \eqref{D:R2} is trivial. Suppose \eqref{D:R2} holds whenever $\dim D<k$. Now let $\dim D=k\geq 1$.
Choose a one-dimensional closed linear subgroup $Z$ of $D$. By \eqref{centralcoho} the Hochschild-Serre spectral
sequence \[H^{p}(\mathfrak{d}/\mathfrak{z},H^{q}(\mathfrak{z},R(D)\delta_{D}))\Longrightarrow H^{p+q}(\mathfrak{d},
R(D)\delta_{D})\] degenerates and \[H^{p}(\mathfrak{d},R(D)\delta_{D})=H^{p}(\mathfrak{d}/\mathfrak{z},R(D/Z)
\delta_{D/Z}).\] Then, the conclusion follows from the induction hypothesis.
\end{proof}

\begin{proposition}\label{P:comparison1}
Let $U$ be a closed linear subgroup $N$. Then the inclusion $R(N/U)\delta_{N/U}\subset\mathcal{S}(N/U)'$ induces
isomorphisms \[H^{i}(\mathfrak{n},R(N/U)\delta_{N/U})=H^{i}(\mathfrak{n},\mathcal{S}(N/U)'),\ \forall i\geq 0.\]
\end{proposition}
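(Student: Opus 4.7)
The plan is to prove this by induction on $\dim N$. The base case $N = \{e\}$ forces $U = \{e\}$ and $\mathfrak{n} = 0$, so both $R(N/U)\delta_{N/U}$ and $\mathcal{S}(N/U)'$ collapse to $\mathbb{C}$ and the comparison is trivial. For the inductive step, since $N$ is a nontrivial unipotent (hence nilpotent) linear group, its center $Z(N)$ is nontrivial, and I would choose a one-dimensional closed subgroup $C \subset Z(N)$. The complexification $\mathfrak{c}$ is then a central ideal of $\mathfrak{n}$, so the Hochschild--Serre spectral sequence
\[
E_{2}^{p,q} = H^{p}(\mathfrak{n}/\mathfrak{c},\, H^{q}(\mathfrak{c}, -)) \Longrightarrow H^{p+q}(\mathfrak{n}, -)
\]
is available and natural with respect to the inclusion $R(N/U)\delta_{N/U} \hookrightarrow \mathcal{S}(N/U)'$. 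I would then split the argument according to whether $C$ is contained in $U$.

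If $C \not\subset U$, set $V = CU$, which is a closed linear subgroup of $N$ (centrality of $C$ makes $CU$ a subgroup, and closedness in the unipotent group $N$ is automatic). Lemma \ref{L:reduction1} computes the $\mathfrak{c}$-cohomology of both sides: it vanishes in positive degree, and in degree zero equals $\mathcal{S}(N/V)'$, respectively $R(N/V)\delta_{N/V}$, compatibly with the inclusion. The spectral sequence therefore collapses and yields $H^{p}(\mathfrak{n}, \mathcal{S}(N/U)') \cong H^{p}(\mathfrak{n}/\mathfrak{c}, \mathcal{S}(N/V)')$ and similarly for $R(N/U)\delta_{N/U}$. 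Since $N/C$ is a unipotent group of dimension $\dim N - 1$ containing $V/C$ as a closed linear subgroup with $(N/C)/(V/C) = N/V$, the inductive hypothesis supplies the comparison isomorphism for $N/V$, which then lifts to the one for $N/U$. If instead $C \subset U$, then $C$ acts trivially on $N/U$, so $\mathfrak{c}$ acts trivially on both modules; via $N/U = (N/C)/(U/C)$ these modules are precisely those attached to the pair $(N/C, U/C)$. The inductive hypothesis yields the isomorphism on $H^{\ast}(\mathfrak{n}/\mathfrak{c}, -)$, and naturality of the Hochschild--Serre spectral sequence (whose $E_{2}$-page is supported in $q \in \{0, 1\}$ because $\dim\mathfrak{c} = 1$) promotes it to an isomorphism on $H^{\ast}(\mathfrak{n}, -)$.

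I do not expect any serious obstacle. The only points requiring care are verifying that $CU$ is a closed linear subgroup of $N$ (immediate from centrality of $C$ in the unipotent group $N$), and ensuring that the inclusion $R(N/U)\delta_{N/U} \hookrightarrow \mathcal{S}(N/U)'$ is strictly compatible with the reductions in both cases. The latter follows from the explicit descriptions provided by Lemma \ref{L:reduction1} and from the fact that in Case 2 the two modules are literally identified with the corresponding modules for $(N/C, U/C)$, so naturality of the Hochschild--Serre spectral sequence transports the inductive isomorphism without further checking.
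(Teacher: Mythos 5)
Your proposal is correct and follows the same core strategy as the paper's proof: induction on $\dim N$, reducing along a one-dimensional central closed subgroup $C$, with Lemma \ref{L:reduction1} handling the case $C\not\subset U$ and the triviality of the $\mathfrak{c}$-action handling the case $C\subset U$. Two differences are worth recording. First, the paper treats abelian $N$ as a separate case, computing $H^{\ast}(\mathfrak{u},-)$ directly from the Koszul complex and invoking Lemma \ref{L:reduction2}; in your uniform induction this case needs no special treatment (every one-dimensional closed subgroup of an abelian $N$ is central, and your two reductions apply verbatim), so Lemma \ref{L:reduction2} is not needed for this proposition. Second, and more substantively, in the case $C\subset U$ the paper asserts that the two-row Hochschild--Serre spectral sequence degenerates into short exact sequences $0\to H^{i}(\mathfrak{n}',-)\to H^{i}(\mathfrak{n},-)\to H^{i-1}(\mathfrak{n}',-)\to 0$ and then applies the five lemma; for a central one-dimensional $\mathfrak{c}$ acting trivially on the coefficients, the differential $d_{2}\colon E_{2}^{p,1}\to E_{2}^{p+2,0}$ is essentially cup product with the extension class and does not vanish automatically, so that degeneration would require justification. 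Your route --- compare the two spectral sequences, observe that the inclusion induces an isomorphism on every $E_{2}^{p,q}$ by the induction hypothesis (the page being supported in $q=0,1$), and conclude by convergence of first-quadrant spectral sequences --- sidesteps this point entirely and subsumes the paper's five-lemma step. The only small verifications you should make explicit are that $\wedge^{q}\mathfrak{c}^{\ast}$ is a trivial $\mathfrak{n}/\mathfrak{c}$-module, so that $H^{q}(\mathfrak{c},M)\cong M$ as $\mathfrak{n}/\mathfrak{c}$-modules for $q=0,1$, and, in the case $C\not\subset U$, that the identifications $H^{0}(\mathfrak{c},\mathcal{S}(N/U)')\cong\mathcal{S}(N/V)'$ and $H^{0}(\mathfrak{c},R(N/U)\delta_{N/U})\cong R(N/V)\delta_{N/V}$ intertwine the two inclusions; the latter is exactly what the computation $\pi^{\ast}(f\delta_{N/V})=\tilde{f}\delta_{N/U}$ in the proof of Lemma \ref{L:reduction1} provides.
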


\begin{proof} We prove the proposition by induction on $\dim N$.

\textbf{Case~I:~$N$ is abelian.} In this case, $N=U\times D$ for some complementary abelian Lie subgroup
$D$. The subgroup $U$ acts on $\mathcal{S}(N/U)'$ and $R(N/U)\delta_{N/U}$ trivially. Then, the differentials in the
Koszul complexes that compute $H^{\ast}(\mathfrak{u},\mathcal{S}(N/U)')$ and $H^{\ast}(\mathfrak{u},\mathcal{S}(N/U)')$
are all 0. Thus, \begin{equation}\label{D:S1}
H^{j}(\mathfrak{u},\mathcal{S}(N/U)')=\bigwedge^{j}\mathfrak{u}^*\otimes\mathcal{S}(N/U)',~~~j\in\mathbb{Z}.
\end{equation} and \begin{equation}\label{D:R1}
H^{j}(\mathfrak{u},R(N/U)\delta_{N/U})=\bigwedge^{j}\mathfrak{u}^*\otimes R(N/U)\delta_{N/U},~~~j\in\mathbb{Z}.
\end{equation}
By \eqref{D:S2} and \eqref{D:R2}, $H^{p}(\mathfrak{n}/\mathfrak{u},H^{q}(\mathfrak{u},\cdot))\neq 0$ only when
$p=0$ while $\cdot=\mathcal{S}(N/U)'$ or $R(N/U)\delta_{N/U}$. Then, both Hochschild-Serre spectral sequences \[H^{p}(\mathfrak{n}/\mathfrak{u},H^{q}(\mathfrak{u},\cdot))\Rightarrow H^{p+q}(\mathfrak{n},\cdot)\] for
$\cdot=\mathcal{S}(N/U)'$ and $R(N/U)\delta_{N/U}$ degenerate. Thus, we have \[H^{i}(\mathfrak{n},\mathcal{S}(N/U)')
=H^{0}(\mathfrak{n}/\mathfrak{u},\bigwedge^{i}\mathfrak{u}^*\otimes\mathcal{S}(N/U)')=\bigwedge^{i}\mathfrak{u}^*,
\ \forall i\geq 0\] and \[H^{i}(\mathfrak{n},R(N/U)\delta_{N/U})=H^{0}(\mathfrak{n}/\mathfrak{u},
\bigwedge^{i}\mathfrak{u}^{*}\otimes R(N/U)\delta_{N/U}')=\bigwedge^{i}\mathfrak{u}^{*},\ \forall i\geq 0.\] Hence,
we get isomorphisms $H^{i}(\mathfrak{n},R(N/U)\delta_{N/U})=H^{i}(\mathfrak{n},\mathcal{S}(N/U)')$ ($i\geq 0$).

\textbf{Case II:~$N$ is non-abelian.} Choose a one-dimensional closed linear central subgroup $C$ of $N$. First,
assume that $C\subset U$. Put $N'=N/C$ and $U'=U/C$. Then, $N/U=N'/U'$. Note that the action of $C$ on
$R(N/U)\delta_{N/U}$ is trivial. Hence, \begin{equation}\label{D:S3}
H^{i}(\mathfrak{c},\mathcal{S}(N/U)')=\left\{\begin{matrix}\mathcal{S}(N/U)'&i=0,1\\0&i\neq 0,1.\end{matrix}\right.
\end{equation} and \begin{equation}\label{D:R3}
H^{i}(\mathfrak{c},R(N/U)\delta_{N/U})=\left\{\begin{matrix} R(N/U)\delta_{N/U}&\text{for~}i=0,1
\\0&\text{for~}i\neq 0,1. \end{matrix}\right.\end{equation} Then, the Hochschild-Serre spectral sequences \[H^{p}(\mathfrak{n}',H^{q}(\mathfrak{c},R(N/U)\delta_{N/U}))\Longrightarrow H^{p+q}(\mathfrak{n},R(N/U)\delta_{N/U})\]
and \[H^{p}(\mathfrak{n}',H^{q}(\mathfrak{c},\mathcal{S}(N/U)'))\Longrightarrow H^{p+q}(\mathfrak{n},\mathcal{S}(N/U)')\]
degenerate into short exact sequences and we are led to the following commutative diagram:
\begin{equation*}
\xymatrix@C=0.3cm{0\ar[r]&H^{i}(\mathfrak{n}',R(N'/U')\delta_{N'/U'})
\ar[d]\ar[r]&H^{i}(\mathfrak{n},R(N/U)\delta_{N/U})\ar[d]&\\
0\ar[r]&H^{i}(\mathfrak{n}',\mathcal{S}(N'/U')')\ar[r]&H^{i}(\mathfrak{n},\mathcal{S}(N/U)')&\\
&\ar[r]&H^{i-1}(\mathfrak{n}',R(N'/U')\delta_{N'/U'})\ar[d]\ar[r]&0\\
&\ar[r]&H^{i-1}(\mathfrak{n}',\mathcal{S}(N'/U')))\ar[r]&0}.
\end{equation*}
By induction hypothesis, the first and the third vertical arrows in the above commutative diagram are isomorphisms.
By the five lemma, we get isomorphisms $H^{i}(\mathfrak{n},R(N/U)\delta_{N/U})=H^{i}(\mathfrak{n},\mathcal{S}(N/U)')$
($\forall i\geq 0$).

Second, assume that $C\nsubseteq U$. Put $N'=N/C$ and $U'=UC/C$. Let $V=UC$. By \eqref{centralcoho2} and
\eqref{centralcoho}, the Hochschild-Serre spectral sequences
\[H^{p}(\mathfrak{n}/\mathfrak{c},H^{q}(\mathfrak{c},\ast))\Rightarrow H^{p+q}(\mathfrak{n},\ast)\] for $\ast=
\mathcal{S}(N/U)'$ and $R(N/U)\delta_{N/U}$ degenerate. Then, we have \[H^{i}(\mathfrak{n},\mathcal{S}(N/U)')
=H^{i}(\mathfrak{n}',\mathcal{S}(N'/U')'),\ \forall i\geq 0\] and \[H^{i}(\mathfrak{n},R(N/U)\delta_{N/U})=
H^{i}(\mathfrak{n}',R(N'/U')\delta_{N'/U'}),\ \forall i\geq 0.\] Since $\dim N'=\dim N-1<\dim N$, by induction
hypothesis we get isomorphisms $H^{i}(\mathfrak{n},R(N/U)\delta_{N/U})=H^{i}(\mathfrak{n},\mathcal{S}(N/U)')$
($\forall i\geq 0$).
\end{proof}

\section{Casselman's comparison theorem}

\subsection{Comparison theorems}

\begin{theorem}\label{T:comparison3}
Let $(\sigma,V_{\sigma})$ be a finite-dimensional complex linear algebraic representation of $P$. Then the
inclusion $I(\sigma)'\rightarrow I(\sigma)_{K}^{\ast}$ induces isomorphisms \[H^{i}(\mathfrak{n},I(\sigma)')=
H^{i}(\mathfrak{n},I(\sigma)_{K}^{\ast}),\ \forall i\geq 0.\]
\end{theorem}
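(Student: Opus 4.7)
The plan is to deduce Theorem \ref{T:comparison3} from the filtration machinery developed in Sections \ref{S:Bruhat}--\ref{S:comparison-poly} via successive reductions, essentially the strategy sketched in the introduction. First, by the Hecht-Schmid lemma \cite[Lemma 2.37]{Hecht-Schmid} one has $H^{i}(\mathfrak{n}, I(\sigma)_{K}^{\ast}) = H^{i}(\mathfrak{n}, (I(\sigma)_{K}^{\ast})^{[\mathfrak{n}]})$, and by Casselman's automatic continuity theorem the inclusion $I(\sigma)' \hookrightarrow I(\sigma)_{K}^{\ast}$ yields an equality $I(\sigma)^{'[\mathfrak{n}]} = (I(\sigma)_{K}^{\ast})^{[\mathfrak{n}]}$. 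The theorem therefore reduces to showing that the inclusion $I(\sigma)^{'[\mathfrak{n}]} \subset I(\sigma)'$ induces isomorphisms on $\mathfrak{n}$-cohomology.

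Next, I would reduce to the case where $N$ acts trivially on $V_{\sigma}$ by inducting on the length of a composition series. For a short exact sequence $0 \to \sigma_{1} \to \sigma_{2} \to \sigma_{3} \to 0$ of algebraic $P$-modules, Lemma \ref{L:Abe7} furnishes a short exact sequence of Casselman-Jacquet modules, while the dual sequence $0 \to I(\sigma_{3})' \to I(\sigma_{2})' \to I(\sigma_{1})' \to 0$ is classical. Comparing the associated long exact $\mathfrak{n}$-cohomology sequences via the five lemma reduces the problem to $V_{\sigma}$ irreducible. Since $N$ is unipotent and $M$, $A$ normalize $N$, the fixed-point space $V_{\sigma}^{N}$ is a nonzero $P$-submodule, which in the irreducible case forces $V_{\sigma}^{N} = V_{\sigma}$.

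With $N$ acting trivially on $V_{\sigma}$, I would unwind the remaining comparison using two nested filtrations. Applying Lemma \ref{L:Abe6} (together with Lemma \ref{L:Bruhat2}) and inducting on $k$ with the five lemma reduces the statement to the analogous one for $\mathcal{T}(C(w_{k}), E(\sigma))^{[\mathfrak{n}]} \subset \mathcal{T}(C(w_{k}), E(\sigma))$. Applying Lemma \ref{L:Abe5} and inducting on the transversal degree $p$ further reduces to the same question for $\Gr^{p}\mathcal{T}(C(w), E(\sigma))$. Corollary \ref{C:free} supplies a finite $N$-equivariant filtration of this graded piece whose subquotients are isomorphic to $\mathcal{S}(C(w), E(\sigma))'$. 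Since $N$ acts trivially on $V_{\sigma}$, the bundle $E(\sigma)|_{C(w)}$ is trivial $N$-equivariantly, so $\mathcal{S}(C(w), E(\sigma))' \cong \mathcal{S}(C(w))' \otimes V_{\sigma}^{\ast}$ with trivial $\mathfrak{n}$-action on $V_{\sigma}^{\ast}$, and the identification $C(w) \cong N/(N \cap N^{w})$ permits the invocation of Proposition \ref{P:comparison1} to conclude.

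I expect the main obstacle to lie in the final step: the inductive use of the five lemma along the filtration of Corollary \ref{C:free} requires that $(-)^{[\mathfrak{n}]}$ carry each short exact sequence in that filtration to a short exact sequence. This surjectivity is not directly isolated as a lemma in Section \ref{S:Abe}; however, once $N$ acts trivially on $V_{\sigma}$ the extensions in the filtration become extensions of free $R(C(w))$-modules with trivial $N$-action, so the explicit descriptions of the Casselman-Jacquet modules in Lemma \ref{L:Abe-Fp} and Lemma \ref{L:Abe-Gp}, combined with the tensor product formula of Lemma \ref{L:finite2}, should yield the required surjectivity by direct computation.
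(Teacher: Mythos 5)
Your proposal follows essentially the same route as the paper's own proof: the identical reduction via \cite[Lemma 2.37]{Hecht-Schmid} and Casselman's automatic continuity, the same composition-series reduction to the case of trivial $N$-action on $V_{\sigma}$ (the paper performs this step last rather than first, which is immaterial), and the same descent through the Bruhat and transversal-degree filtrations using Lemmas \ref{L:Abe6} and \ref{L:Abe5}, Corollary \ref{C:free}, and finally Lemma \ref{L:Abe1} together with Proposition \ref{P:comparison1}. The exactness of the Casselman--Jacquet functor along the finite filtration of Corollary \ref{C:free}, which you single out as the main remaining obstacle, is indeed passed over silently in the paper as well, and your suggested verification via the explicit descriptions in Lemmas \ref{L:Abe-Fp}, \ref{L:Abe-Gp} and \ref{L:finite2} is a reasonable way to close that step.
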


\begin{proof}
By \cite[Lemma 2.37]{Hecht-Schmid}, we have \begin{equation}\label{Eq:HS}H^{i}(\mathfrak{n},I(\sigma)_{K}^{\ast})
=H^{i}(\mathfrak{n},(I(\sigma)_{K}^{\ast})^{[\mathfrak{n}]}),\ \forall i\geq 0.\end{equation} By Casselman's
automatic continuity theorem (\cite[p. 416]{Casselman},\cite[Theorem 11.4]{Bernstein-Krotz}, \cite[p.77]{Wallach}),
we have \begin{equation}\label{Eq:continuity}I(\sigma)^{'[\mathfrak{n}]}=(I(\sigma)_{K}^{\ast})^{[\mathfrak{n}]}.
\end{equation} Then, it reduces to show the following assertion: the inclusion $I(\sigma)^{'[\mathfrak{n}]}
\subset I(\sigma)$ induces isomorphisms \begin{equation}\label{Eq:comparison3}H^{i}(\mathfrak{n},
I(\sigma)^{'[\mathfrak{n}]})=H^{i}(\mathfrak{n},I(\sigma)'),\ \forall i\geq 0.\end{equation}

First, suppose that the action of $N$ on $V_{\sigma}$ is trivial. Since $I(\sigma)=C^{\infty}(E(\sigma))$, then
$I(\sigma)'=C^{\infty}(E(\sigma))'$ and $I(\sigma)^{'[\mathfrak{n}]}=C^{\infty}(E(\sigma))^{'[\mathfrak{n}]}$.
Take the Bruhat filtration of $C^{\infty}(E(\sigma))'$. By Lemma \ref{L:Abe6}, it suffices to prove that: for
each $k\in\mathbb{Z}$, the inclusion $(I_{k}/I_{k-1})^{[\mathfrak{n}]}\subset I_{k}/I_{k-1}$ induces isomorphisms
for cohomology on all degrees. Since $I_{k}/I_{k-1}=\mathcal{T}(C(w_{k}),E(\sigma))$, it suffices to prove the
cohomological isomorphism for the inclusions $\mathcal{T}(C(w),E(\sigma))^{[\mathfrak{n}]}\subset\mathcal{T}(C(w),
E(\sigma))$ ($w\in W$). Consider the filtration through transversal degree. Since Lie algebra cohomology is
compatible with direct limit, it suffices to show the cohomological isomorphism for each inclusion
$(F_{p}\mathcal{T}(C(w),E))^{[\mathfrak{n}]}\subset F_{p}\mathcal{T}(C(w),E)$ ($p\in\mathbb{Z}_{\geq 0}$).
By Lemma \ref{L:Abe5} and taking induction on the degree $p$, it suffices to show that the inclusions
$(\Gr^{p}\mathcal{T}(C(w),E))^{[\mathfrak{n}]}\subset\Gr^{p}\mathcal{T}(C(w),E)$ induces isomorphism for
cohomology on all degrees. By Corollary \ref{C:free}, it suffices to show so for the inclusions $\mathcal{S}(C(w),
E(\sigma))^{'[\mathfrak{n}]}\subset\mathcal{S}(C(w),E(\sigma))'$ ($w\in W$), which follows from Lemma \ref{L:Abe1}
and Proposition \ref{P:comparison1}.

In general, take a composition series of $V_{\sigma}$ as a $P$ representation: \[0=V_{0}\subset V_{1}\subset\cdots
\subset V_{k}=V_{\sigma}.\] Then, the action of $N$ on each graded piece $V_{j}/V_{j-1}$ ($1\leq j\leq k$) is
trivial. Thus, we have isomorphisms \[H^{i}(\mathfrak{n},I(V_{j}/V_{j-1})^{'[\mathfrak{n}]})=H^{i}(\mathfrak{n},
I(V_{j}/V_{j-1})'),\ \forall i\in\mathbb{Z}.\] By Lemma \ref{L:Abe7}, we have short exact sequences \[0\rightarrow
I(V_{j-1})^{'[\mathfrak{n}]}\rightarrow I(V_{j})^{'[\mathfrak{n}]}\rightarrow I(V_{j}/V_{j-1})^{'[\mathfrak{n}]}
\rightarrow 0.\] By the cohomological long exact sequences associated to short exact sequences \[0\rightarrow
I(V_{j-1})'\rightarrow I(V_{j})'\rightarrow I(V_{j}/V_{j-1})'\rightarrow 0\] and \[0\rightarrow
I(V_{j-1})^{'[\mathfrak{n}]}\rightarrow I(V_{j})^{'[\mathfrak{n}]}\rightarrow I(V_{j}/V_{j-1})^{'[\mathfrak{n}]}
\rightarrow 0\] and taking induction on $j$, one shows that each inclusion $I(V_{j})^{'[\mathfrak{n}]}\rightarrow
I(V_{j})'$ induces isomorphisms \[H^{i}(\mathfrak{n},I(V_{j})^{'[\mathfrak{n}]})=H^{i}(\mathfrak{n},I(V_{j})'),
\ \forall i\in\mathbb{Z}.\] Taking $j=k$, we get the conclusion of the theorem.
\end{proof}

\begin{theorem}\label{T:comparison4}
Let $V$ be an admissible finitely generated moderate growth smooth Fr\'echet representation of $G$. Then the inclusion
$V_{K}\subset V$ induces isomorphisms \begin{equation}\label{Eq:homology}H_{i}(\mathfrak{n},V_{K})=H_{i}(\mathfrak{n},
V),\ \forall i\in\mathbb{Z}\end{equation} and the inclusion $V'\rightarrow V_{K}^{\ast}$ induces isomorphisms
\begin{equation}\label{Eq:cohomology}H^{i}(\mathfrak{n},V')=H^{i}(\mathfrak{n},V_{K}^{\ast}),\ \forall i\in\mathbb{Z}.
\end{equation}
\end{theorem}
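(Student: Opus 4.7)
The plan is to deduce Theorem~\ref{T:comparison4} by combining three ingredients: the principal series case already established as Theorem~\ref{T:comparison3}, a duality identification relating $\mathfrak{n}$-homology to $\mathfrak{n}$-cohomology of the dual, and a Hecht-Taylor type reduction from arbitrary $V$ to principal series. I will establish (\ref{Eq:cohomology}) first and then derive (\ref{Eq:homology}) from it; alternatively the two statements can be shown equivalent and then only one needs to be proved.

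For the equivalence of (\ref{Eq:homology}) and (\ref{Eq:cohomology}), I would invoke the duality argument of \cite[Lemma 5.11]{Casselman-Hecht-Milicic}. By \cite[Corollary 2.4]{Casselman-Osborne}, $H_i(\mathfrak{n},V_K)$ is finite-dimensional for each $i\geq 0$, so linear duality of the Koszul complex computing $H_\bullet(\mathfrak{n},V_K)$ yields canonical identifications $H^i(\mathfrak{n},V_K^\ast)\cong H_i(\mathfrak{n},V_K)^\ast$. On the Fr\'echet side, the Koszul complex computing $H_\bullet(\mathfrak{n},V)$ consists of NF spaces, so its strong dual computes $H^\bullet(\mathfrak{n},V')$; once one checks that the relevant Koszul differentials have closed range (the essential content of \cite[Lemma 5.11]{Casselman-Hecht-Milicic}) one obtains $H^i(\mathfrak{n},V')\cong H_i(\mathfrak{n},V)^\ast$ as topological vector spaces. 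Under these identifications, the comparison map in (\ref{Eq:cohomology}) is the linear dual of the comparison map in (\ref{Eq:homology}), so the two assertions are equivalent.

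For (\ref{Eq:cohomology}) itself, I would apply the reduction of \cite[Proposition 3]{Hecht-Taylor3}. Using Casselman's subrepresentation theorem together with the Casselman-Wallach equivalence and its exactness, the smooth Fr\'echet representation $V$ fits into a finite coresolution
\[0\longrightarrow V\longrightarrow I(\sigma_0)\longrightarrow I(\sigma_1)\longrightarrow\cdots\longrightarrow I(\sigma_m)\longrightarrow 0\]
by Casselman-Wallach globalizations of principal series, lifting the analogous resolution of $V_K$ by $I(\sigma_j)_K$ on the Harish-Chandra module side. Taking strong duals (exact in the DNF category) and applying $H^\bullet(\mathfrak{n},-)$ produces a morphism of hyperhomology spectral sequences from $V'$ to $V_K^\ast$; since the comparison is an isomorphism on each $I(\sigma_j)$ by Theorem~\ref{T:comparison3}, a standard five-lemma / spectral sequence chase propagates the isomorphism to $V$.

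Given that Theorem~\ref{T:comparison3} and the cited reduction are available as black boxes, the main technical obstacle is the duality identification: one must verify that the Koszul differentials for $V$ (or equivalently for $V'$) have closed range, so that passing to strong duals commutes with taking cohomology. This is where the finiteness of $H_\bullet(\mathfrak{n},V_K)$ from \cite{Casselman-Osborne} and the topological structure of $V$ as an NF space enter essentially; once this is in place the rest of the argument is functorial bookkeeping.
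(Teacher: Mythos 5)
Your proposal follows essentially the same route as the paper: the duality argument of \cite[Lemma 5.11]{Casselman-Hecht-Milicic} together with the finiteness of $H_{\bullet}(\mathfrak{n},V_{K})$ from \cite{Casselman-Osborne} to make \eqref{Eq:homology} and \eqref{Eq:cohomology} equivalent, and then the Hecht--Taylor reduction by a coresolution of $V$ by principal series, Theorem \ref{T:comparison3} for each $I(\sigma_{q})$, and a comparison of the two spectral sequences of the resulting double complex. The only caveat is that you should not assert the coresolution is finite (the iterated subrepresentation-theorem construction need not terminate); as in the paper, a possibly infinite coresolution suffices because the Koszul degree is bounded by $\dim\mathfrak{n}$, which already guarantees convergence of both spectral sequences.
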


\begin{proof}
First, we show the homological comparison theorem \eqref{Eq:homology} and the cohomological comparison theorem
\eqref{Eq:cohomology} are equivalent. Suppose we have the cohomological comparison theorem. Then, $H^{i}(\mathfrak{n},V)$
are all finite-dimensional as $H^{i}(\mathfrak{n},V_{K})$ are (\cite[Corollary 2.4]{Casselman-Osborne}). By a duality
argument in \cite[Lemma 5.11]{Casselman-Hecht-Milicic}, we get the homological comparison theorem. The proof for the
converse direction is similar. We show the homological comparison theorem \eqref{Eq:homology} below.

Second, when $V=I(\sigma)$ is a principal series with $\sigma$ a finite-dimensional complex linear algebraic
representation of $P$, \eqref{Eq:cohomology} is shown in Theorem \ref{T:comparison3}. By the duality argument
in \cite[Lemma 5.11]{Casselman-Hecht-Milicic}, we get \eqref{Eq:homology} for $V=I(\sigma)$.

Third, by \cite[Proposition 3]{Hecht-Taylor3}, one can reduce the homological comparison theorem for a general
representation $V\in\mathcal{HF}_{\mod}(G)$ to the principal series case. For readers' convenience, we recall
Hecht-Taylor's proof in \cite{Hecht-Taylor3}. By Casselman's subrepresentation theorem there is a finite-dimensional 
complex linear algebraic representation $\sigma_{0}$ of $P$ such that there is an injection $V\hookrightarrow 
I(\sigma_{0})$ (\cite[Proposition 4.2.3]{Wallach1}). Let $V_{1}$ be the cokernel, which is still in the category
$\mathcal{HF}_{\mod}(G)$. By Casselman's subrepresentation theorem again, there is a finite-dimensional complex
linear algebraic representation $\sigma_{1}$ of $P$ such that there is an injection $V_{1}\hookrightarrow
I(\sigma_{1})$. Continuing in this way, we obtain a resolution of $V$ by principal series:\[V\rightarrow 
I(\sigma_{0})\rightarrow I(\sigma_{1})\rightarrow\cdots.\] This double complex has bounded $p$ parameter. 
The natural map of double complexes \[\wedge^{p}\mathfrak{n}\otimes I(\sigma_{q})_{K}\rightarrow\wedge^{p}
\mathfrak{n}\otimes I(\sigma_{q})\] can be analyzed by means of the (convergent) spectral sequences associated 
to the two standard filtrations. The spectral sequence corresponding to the second filtration degenerates at 
$E_{2}: E_{2}^{q,-p}$-terms are zero except when $q=0$, and the resulting map on the $(-p)$-cohomology of the 
total complex is nothing but \begin{equation}\label{Eq:filtration2'}H_{p}(\mathfrak{n},V_{K})\rightarrow 
H_{p}(\mathfrak{n},V).\end{equation} On the other hand, the map between the $E_{1}^{-p,q}$ terms of the 
spectral sequence associated to the first filtration is \begin{equation}\label{Eq:filtration1'}
H_{p}(\mathfrak{n},I(\sigma_{q})_{K})\rightarrow H_{p}(\mathfrak{n},I(\sigma_{q})).\end{equation} As shown 
above, maps in \eqref{Eq:filtration1'} are isomorphisms, so are maps in \eqref{Eq:filtration2'}.
\end{proof}

\subsection{Closedness}

Theorem \ref{T:comparison4} has the following immediate consequence.

\begin{corollary}\label{C:comparison5}
Let $V$ be an admissible finitely generated  moderate growth smooth Fr\'echet representation of $G$. For the Koszul
complex associated to $V$: \begin{equation*}\xymatrix@R=0.05cm{0\ar[r]&\wedge^{d}\mathfrak{n}\otimes_{\mathbb{C}}V
\ar[r]^{\partial_{d}}&\cdots\ar[r]^{\partial_{2}}&\mathfrak{n}\otimes_{\mathbb{C}}V\ar[r]^{\partial_{1}}&V\ar[r]&0}
\end{equation*} where $d=\dim\mathfrak{n}$, each $\Im(\partial_{i})$ is an NF space and it is a closed subspace of
$\wedge^{i-1}\mathfrak{n}\otimes_{\mathbb{C}}V$ ($1\leq i\leq d$). Similarly, we have DNF property and closedness
for images of boundary operators in the Koszul complex defining $\mathfrak{n}$-cohomology of $V'$.
\end{corollary}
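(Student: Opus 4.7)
The plan is to combine Theorem \ref{T:comparison4} with a general open-mapping argument. The decisive input is that $H_{i}(\mathfrak{n},V)$ is finite-dimensional: this follows from Theorem \ref{T:comparison4} together with the classical finiteness $\dim H_{i}(\mathfrak{n},V_{K})<\infty$ of \cite[Corollary 2.4]{Casselman-Osborne}. First, I would observe that each $\wedge^{i}\mathfrak{n}\otimes_{\mathbb{C}}V$ is an NF space, being the tensor product of the finite-dimensional space $\wedge^{i}\mathfrak{n}$ with the NF space $V$. The Koszul boundary $\partial_{i}$ is a continuous linear map between NF spaces, since every summand is built from the bracket on $\mathfrak{n}$ and the continuous $\mathfrak{n}$-action on the smooth Fr\'echet representation $V$. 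Consequently $\ker(\partial_{i})$ is a closed subspace of $\wedge^{i}\mathfrak{n}\otimes V$ and is itself NF.

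The crucial step is the following general topological lemma, which I would prove in passing: \emph{a continuous linear map $T:X\to Y$ between Fr\'echet spaces whose image has finite codimension in $Y$ must have closed image}. To see this, pick a finite-dimensional $F\subset Y$ with $Y=T(X)+F$; then $\tilde{T}:X\oplus F\to Y$, $(x,f)\mapsto T(x)+f$, is a continuous surjection between Fr\'echet spaces and hence open by the open mapping theorem. Since $F\cap T(X)$ is a subspace of the finite-dimensional Hausdorff space $F$ it is closed in $F$, so $X\oplus(F\setminus(F\cap T(X)))$ is an open saturated set in $X\oplus F$ whose image under $\tilde{T}$ is exactly $Y\setminus T(X)$; thus $T(X)$ is closed. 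Applying this to $\partial_{i+1}:\wedge^{i+1}\mathfrak{n}\otimes V\to\ker(\partial_{i})$, whose algebraic cokernel is $H_{i}(\mathfrak{n},V)$ of finite dimension by the previous paragraph, I conclude that $\Im(\partial_{i+1})$ is closed in $\ker(\partial_{i})$, hence in $\wedge^{i}\mathfrak{n}\otimes V$. Being a closed subspace of an NF space, $\Im(\partial_{i+1})$ is NF.

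For the cohomology side, the Koszul cochain complex for $V'$ is the strong topological dual of the chain complex for $V$: one identifies $(\wedge^{i}\mathfrak{n}\otimes V)'\cong\wedge^{i}\mathfrak{n}^{\ast}\otimes V'$, and the coboundary $d^{i}$ is, up to sign, the transpose of $\partial_{i+1}$. By the closed range theorem (valid in the Fr\'echet category), the closedness of $\Im(\partial_{i+1})$ established above is equivalent to the closedness of $\Im(d^{i})$. The DNF property is then immediate since closed subspaces of the DNF spaces $\wedge^{i}\mathfrak{n}^{\ast}\otimes V'$ are DNF.

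The main subtlety is the open-mapping lemma, which is where the bridge from the \emph{algebraic} finite-codimensionality supplied by Theorem \ref{T:comparison4} to the desired \emph{topological} closedness is crossed; once this lemma is accepted, everything else is formal. A secondary point to verify carefully is the identification of the cochain complex for $V'$ as the transpose of the chain complex for $V$, so that the closed range theorem can be applied cleanly.
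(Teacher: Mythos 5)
Your argument is correct. For the homology half it is essentially the paper's proof: the paper also gets finite codimensionality of $\Im(\partial_{i})$ in the closed (hence NF) subspace $\ker(\partial_{i-1})$ from Theorem \ref{T:comparison4} together with the Casselman--Osborne finiteness, and then quotes \cite[Lemma A.1]{Casselman-Hecht-Milicic} for the step ``continuous linear map with finite-codimensional image into an NF space has closed, NF image''; your open-mapping argument via $\tilde{T}:X\oplus F\to Y$ is exactly a self-contained proof of that cited lemma (and it is correct: $\tilde T$ is a continuous surjection of Fr\'echet spaces, hence open, and the image of the open set $X\oplus\bigl(F\setminus(F\cap T(X))\bigr)$ is precisely $Y\setminus T(X)$). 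Where you genuinely diverge is the dual statement. The paper simply reruns the same argument on the DNF side, using the cohomological comparison \eqref{Eq:cohomology} for $V'$ and the DNF case of \cite[Lemma A.1]{Casselman-Hecht-Milicic}; you instead deduce it from the homology side by identifying the Koszul cochain complex of $V'$ with the transposed complex (legitimate, since $\wedge^{i}\mathfrak{n}$ is finite-dimensional, so $(\wedge^{i}\mathfrak{n}\otimes V)'\cong\wedge^{i}\mathfrak{n}^{\ast}\otimes V'$ and $d^{i}={}^{t}\partial_{i+1}$ up to sign) and invoking the Fr\'echet closed range theorem. That works, with one small caution: the clean statement of that theorem gives that $\Im(\partial_{i+1})$ closed implies $\Im({}^{t}\partial_{i+1})$ is \emph{weak-star} closed, which then implies strongly closed since the strong topology is finer; the full ``equivalence'' you assert needs reflexivity of NF spaces, but only the forward implication is used, so nothing breaks. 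Your route buys you independence from the cohomological comparison for $V'$ and from the DNF version of the finite-codimension lemma, at the price of the closed-range and duality bookkeeping; the paper's route is more uniform, treating both sides by the identical finite-codimension argument.
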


\begin{proof}
By Theorem \ref{T:comparison4}, $\Im(\partial_{i})$ is a finite co-dimensional subspace of $\ker(\partial_{i-1})$.
Since $\partial_{i-1}$ is a linear continuous map, then $\ker(\partial_{i-1})$ is a closed subspace of the NF space $\wedge^{i-1}\mathfrak{n}\otimes_{\mathbb{C}}V$. Thus, $\ker(\partial_{i-1})$ is also an NF space. By Theorem
\ref{T:comparison3}, the linear continuous map $\partial_{i}:\wedge^{i}\mathfrak{n}\otimes_{\mathbb{C}}V\rightarrow
\ker(\partial_{i-1})$ has finite co-dimensional image for each $i$. By \cite[Lemma A.1]{Casselman-Hecht-Milicic},
it follows that $\Im(\partial_{i})$ is an NF space. Hence, it is a closed subspace of $\wedge^{i-1}\mathfrak{n}
\otimes_{\mathbb{C}}V$. The proof for the DNF property and closedness of images of boundary operators in the Koszul
complex defining $\mathfrak{n}$-cohomology of $V'$ is similar.
\end{proof}

Let $V$ be an admissible finitely generated  moderate growth smooth Fr\'echet representation of $G$. Recall that
Casselman's automatic continuity theorem says that \[V_{K}/\mathfrak{n}^{k}V_{K}=V/\overline{\mathfrak{n}^{k}V},
\ \forall k\geq 0.\] In the following Theorem \ref{T:comparison4}, we show that each $\mathfrak{n}^{k}V$ is a
closed subspace of $V$. Hence, $V_{K}/\mathfrak{n}^{k}V_{K}=V/\mathfrak{n}^{k} V$.

\begin{theorem}\label{T:comparison6}
Let $V$ be an admissible finitely generated  moderate growth smooth Fr\'echet representation of $G$. Then for every
$k\geq 0$, $\mathfrak{n}^{k} V$ is a closed subspace of $V$ and the inclusion $V_{K}\subset V$ induces an isomorphism
\[V_{K}/\mathfrak{n}^{k}V_{K}=V/\mathfrak{n}^{k} V.\]
\end{theorem}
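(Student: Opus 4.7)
The plan is to deduce the theorem from the $H_{0}$-part of Theorem~\ref{T:comparison4} together with Casselman's automatic continuity, by a Nakayama-style iteration followed by a finite-codimension matching. The case $k=0$ is trivial, so throughout fix $k\geq 1$.

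First I would establish the purely algebraic identity $V = V_{K} + \mathfrak{n}^{k} V$ for every $k\geq 1$ by induction on $k$. The base case $k=1$ is precisely the surjectivity of the natural map $V_{K}/\mathfrak{n} V_{K}\to V/\mathfrak{n} V$, which is built into the $i=0$ isomorphism $H_{0}(\mathfrak{n},V_{K})\cong H_{0}(\mathfrak{n},V)$ from Theorem~\ref{T:comparison4}. For the induction step, assuming $V = V_{K} + \mathfrak{n}^{k-1} V$, one substitutes $V = V_{K} + \mathfrak{n} V$ into the rightmost factor to get
\[
V = V_{K} + \mathfrak{n}^{k-1}(V_{K} + \mathfrak{n} V) = V_{K} + \mathfrak{n}^{k-1}V_{K} + \mathfrak{n}^{k} V \subseteq V_{K} + \mathfrak{n}^{k} V,
\]
where the inclusion uses that $V_{K}$ is $\mathcal{U}(\mathfrak{g})$-stable, hence $\mathfrak{n}^{k-1}V_{K}\subseteq V_{K}$.

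Next I would pin down the intersection $V_{K}\cap\mathfrak{n}^{k} V$. Casselman's automatic continuity gives the isomorphism $V_{K}/\mathfrak{n}^{k} V_{K}\cong V/\overline{\mathfrak{n}^{k} V}$; its injectivity is equivalent to $V_{K}\cap\overline{\mathfrak{n}^{k} V} = \mathfrak{n}^{k} V_{K}$, and since $\mathfrak{n}^{k} V\subseteq\overline{\mathfrak{n}^{k} V}$ this forces $V_{K}\cap\mathfrak{n}^{k} V = \mathfrak{n}^{k} V_{K}$. Combined with the previous paragraph, the natural map $V_{K}/\mathfrak{n}^{k} V_{K}\to V/\mathfrak{n}^{k} V$ is then an isomorphism of abstract vector spaces, and in particular $V/\mathfrak{n}^{k} V$ is finite dimensional of dimension equal to $\dim V/\overline{\mathfrak{n}^{k} V}$.

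The closedness of $\mathfrak{n}^{k} V$ then drops out by codimension matching: one has the inclusion $\mathfrak{n}^{k} V\subseteq\overline{\mathfrak{n}^{k} V}$ with both subspaces of the same finite codimension in $V$, so they coincide. The substantive analytic content sits in Casselman's automatic continuity and the $H_{0}$-comparison of Theorem~\ref{T:comparison4}; granted these, the present statement is a formal consequence. The one subtlety to keep in mind throughout is the distinction between $\mathfrak{n}^{k} V$ (the algebraic span of $k$-fold products $X_{1}\cdots X_{k}v$) and its closure $\overline{\mathfrak{n}^{k} V}$, and this is precisely what the final codimension-matching step is designed to resolve.
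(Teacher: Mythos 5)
Your proof is correct, but it reaches the conclusion by a genuinely different route than the paper, so let me compare the two. The paper first shows that $\mathfrak{n}^{k}V$ has finite codimension in $V$: from the $H_{0}$-comparison one has $V=\mathfrak{n}V+U$ with $U$ finite dimensional, hence $V=\mathfrak{n}^{k}V+\sum_{0\leq j\leq k-1}\mathfrak{n}^{j}U$ with each $\mathfrak{n}^{j}U$ finite dimensional; it then invokes the functional-analytic fact of \cite[Lemma A.1]{Casselman-Hecht-Milicic} (a continuous linear map of NF spaces with finite-codimensional image has closed image), applied to $\mathfrak{n}^{k}\otimes V\rightarrow V$, to get closedness, and only afterwards uses automatic continuity to identify $V/\mathfrak{n}^{k}V=V/\overline{\mathfrak{n}^{k}V}$ with $V_{K}/\mathfrak{n}^{k}V_{K}$. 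You instead iterate the $H_{0}$-comparison in the form $V=V_{K}+\mathfrak{n}^{k}V$, extract $V_{K}\cap\mathfrak{n}^{k}V=\mathfrak{n}^{k}V_{K}$ from the injectivity in automatic continuity, obtain the abstract isomorphism $V_{K}/\mathfrak{n}^{k}V_{K}\cong V/\mathfrak{n}^{k}V$ first, and then deduce closedness by matching the finite codimensions of $\mathfrak{n}^{k}V$ and $\overline{\mathfrak{n}^{k}V}$; this avoids the NF-space lemma entirely. The one point you assert without proof is that $V_{K}/\mathfrak{n}^{k}V_{K}$ (equivalently $V/\overline{\mathfrak{n}^{k}V}$) is finite dimensional --- and this finiteness is genuinely needed, since equal infinite codimensions would not force $\mathfrak{n}^{k}V=\overline{\mathfrak{n}^{k}V}$. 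It is standard and follows from your own Nakayama-style iteration carried out inside $V_{K}$: $V_{K}/\mathfrak{n}V_{K}=H_{0}(\mathfrak{n},V_{K})$ is finite dimensional by \cite[Corollary 2.4]{Casselman-Osborne}, so $V_{K}=\mathfrak{n}V_{K}+U_{0}$ with $U_{0}$ finite dimensional, whence $V_{K}=\mathfrak{n}^{k}V_{K}+\sum_{0\leq j\leq k-1}\mathfrak{n}^{j}U_{0}$ and each $\mathfrak{n}^{j}U_{0}$ is finite dimensional because $\mathcal{U}_{j}(\mathfrak{n})$ is. With that sentence added your argument is complete; the paper's route gets closedness more directly from soft functional analysis, while yours is more elementary, using only linear algebra on top of the $H_{0}$-comparison and automatic continuity.
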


\begin{proof}
By the homological comparison theorem in Theorem \ref{T:comparison4}, $\mathfrak{n}V$ is a finite co-dimensional
subspace of $V$. Thus, there is a finite-dimension subspace $U$ of $V$ such that $V=\mathfrak{n}V+U$. Hence, for
each $k\geq 0$, \[V=\mathfrak{n}^{k}V+(\sum_{0\leq j\leq k-1}\mathfrak{n}^{j}U).\] By this, $\mathfrak{n}^{k}V$ is
a finite co-dimensional subspace of $V$. Then, $\mathfrak{n}^{k}\otimes V\rightarrow V$ is a linear continuous map
having a finite co-dimensional image. By \cite[Lemma A.1]{Casselman-Hecht-Milicic}, it follows that $\mathfrak{n}^{k}V$
is a closed subspace of $V$. By Casselman's automatic continuity theorem we have $V_{K}/\mathfrak{n}^{k}V_{K}=
V/\overline{\mathfrak{n}^{k}V}$. Then, it follows that $V_{K}/\mathfrak{n}^{k}V_{K}=V/\mathfrak{n}^{k} V$.
\end{proof}


\begin{thebibliography}{99}
\bibitem{Abe} N.~Abe, \emph{Generalized Jacquet modules of parabolically induced representations.} Publ. Res. Inst. Math. Sci.
\textbf{48} (2012), no. 2, 419-473.
\bibitem{Bernstein-Krotz} J.~Bernstein; B.~Kr\"otz, \emph{Smooth Fr\'echet globalizations of Harish-Chandra modules.} Israel
J. Math. \textbf{199} (2014), no. 1, 45-111.
\bibitem{Bierstone-Milman} E.~Bierstone; P.D.~Milman, \emph{Semianalytic and subanalytic sets.} Inst. Hautes \'Etudes Sci.
Publ. Math. (1988), no. 67, 5-42.
\bibitem{Bratten}T.~Bratten, \emph{A comparison theorem for Lie algebra homology groups.} Pacific J. Math. \textbf{182} (1998),
no. 1, 23-36.
\bibitem{Bunke-Olbrich} U.~Bunke; M.~Olbrich, \emph{Cohomological properties of the canonical globalizations of
Harish-Chandra modules. Consequences of theorems of Kashiwara-Schmid, Casselman, and Schneider-Stuhler.} (English summary)
Ann. Global Anal. Geom. \textbf{15} (1997), no. 5, 401-418.
\bibitem {Casselman2} W.~Casselman, \emph{Jacquet modules for real reductive groups.} Proceedings of the International
Congress of Mathematicians (Helsinki, 1978), Acad. Sci. Fennica, Helsinki, 1980, 557-563.
\bibitem {Casselman} W.~Casselman, \emph{Canonical extensions of Harish-Chandra modules to representations of $G$.} Canad.
J. Math. \textbf{41} (1989), no. 3, 385-438.
\bibitem {Casselman-Hecht-Milicic} W.~Casselman; H.~Hecht; D.~Mili$\check{c}$i\'c, \emph{Bruhat filtrations and Whittaker
vectors for real groups.} The mathematical legacy of Harish-Chandra (Baltimore, MD, 1998), 151-190, Proc. Sympos. Pure Math.,
\textbf{68}, Amer. Math. Soc., Providence, RI, 2000.
\bibitem{Casselman-Osborne}W.~Casselman; M.S.~Osborne, \emph{The restriction of admissible representations to $\mathfrak{n}$.}
Math. Ann. \textbf{233} (1978), no. 3, 193-198.
\bibitem {Hecht-Schmid} H.~Hecht; W.~Schmid, \emph{Characters, asymptotics and $\mathfrak{n}$-homology of Harish-Chandra modules.}
Acta Math. \textbf{151} (1983), no. 1-2, 49-151.
\bibitem{Hecht-Taylor} H.~Hecht; J.~Taylor, \emph{A comparison theorem for $\mathfrak{n}$-homology.} Compositio Math. \textbf{86}
(1993), no. 2, 189-207.
\bibitem{Hecht-Taylor3} H.~Hecht; J.~Taylor, \emph{A remark on Casselman's comparison theorem.} Geometry and Representation
Theory of Real and $p$-adic Groups, C\'ordoba, 1995, Progr. Math., vol. \textbf{158}, Birkh\"auser Boston, Boston, MA, 1998,
pp. 139-146.
\bibitem{Kashiwara-Schmid} M.~Kashiwara; W.~Schmid, \emph{Quasi-equivariant D-modules, equivariant derived category, and representations
of reductive Lie groups.} Lie theory and geometry, 457-488, Progr. Math., \textbf{123}, Birkh\"{a}user Boston, Boston, MA, 1994.
\bibitem{Knapp}A.W.~Knapp, \emph{Representation theory of semisimple groups. An overview based on examples.} Reprint of the 1986
original. Princeton Landmarks in Mathematics. Princeton University Press, Princeton, NJ, 2001.
\bibitem{Schaefer} H.~Schaefer, \emph{Topological vector spaces.} Macmillan, 1966.
\bibitem{Schmid} W.~Schmid, \emph{Boundary value problems for group invariant differential equations.} Proc. Cartan Symposium,
Ast\'erique, 1985.
\bibitem{Schmid-Wolf} W.~Schmid; J.~Wolf, \emph{Geometric quantization and derived functor modules for semisimple Lie groups.}
J. Funct. Anal. \textbf{90} (1990), no. 1, 48-112.
\bibitem{Schwartz} L.~Schwartz, \emph{Th\'{e}orie des distributions.} Hermann, Paris, 1966.
\bibitem{Vogan} D.~Vogan, \emph{Unitary representations and complex analysis.} Representation theory and complex analysis, 259-344,
Lecture Notes in Math., \textbf{1931}, Springer, Berlin, 2008.
\bibitem{Wallach1} N.~Wallach, \emph{Real reductive groups. I.} Pure and Applied Mathematics, \textbf{132}. Academic Press, Inc.,
Boston, MA, 1988.
\bibitem{Wallach} N.~Wallach, \emph{Real reductive groups. II.} Pure and Applied Mathematics, \textbf{132-II}. Academic Press,
Inc., Boston, MA, 1992.
\bibitem{Wallach2} N.~Wallach, \emph{Fr\'echet completions of moderate growth old and (somewhat) new results.} arXiv:1403.3658.
\end{thebibliography}
\end{document}